\newcommand{\nc}{\newcommand}
\nc{\on}{\operatorname}
\newtheorem{theorem}{Theorem}[section]
\newtheorem{proposition}[theorem]{Proposition}
\newtheorem{lemma}[theorem]{Lemma}
\newtheorem{corollary}[theorem]{Corollary}
\theoremstyle{definition}
\newtheorem{definition}[theorem]{Definition}
\newtheorem{notation}[theorem]{Notation}
\newtheorem{example}[theorem]{Example}
\newtheorem{remark}[theorem]{Remark}
\nc{\RR}{\mathrm{R}}
\nc{\LL}{\mathrm{L}}
\newcommand{\C}{{\mathbb{C}}}
\newcommand{\N}{{\mathbb{N}}}
\newcommand{\R}{{\mathbb{R}}}
\newcommand{\Z}{{\mathbb{Z}}}
\newcommand{\BBT}{{\mathbb T}}
\newcommand{\BBP}{{\mathbb P}}
\def\phi{{\varphi}}
\def\epsilon{\varepsilon}
\newcommand{\cor}{{\bf k}}
\newcommand{\cort}{{\bf k}^\gamma}
\newcommand{\corg}{{\bf k}^\gamma}
\newcommand{\corgl}{{\bf k}^{\gamma,l}}
\newcommand{\corgr}{{\bf k}^{\gamma,r}}
\def\shd{\mathscr{D}}
\def\shl{\mathscr{L}}
\def\shm{\mathscr{M}}
\def\shn{\mathscr{N}}
\def\sho{\mathscr{O}}
\def\sht{\mathscr{T}}
\newcommand{\symx}{{\mathfrak{X}}}
\renewcommand{\ker}{\operatorname{Ker}}
\newcommand{\rmpt}{{\rm pt}}
\newcommand{\rmptt}{{\{\rm pt\}}}
\renewcommand{\to}[1][]{\xrightarrow[]{#1}}
\newcommand{\from}[1][]{\xleftarrow[]{#1}}
\newcommand{\isoto}[1][]{\xrightarrow[#1]%
{{\raisebox{-.6ex}[0ex][-.6ex]{$\mspace{1mu}\sim\mspace{2mu}$}}}}
\newcommand{\isofrom}[1][]{\xleftarrow[#1]%
{{\raisebox{-.6ex}[0ex][-.6ex]{$\mspace{1mu}\sim\mspace{2mu}$}}}}
\newcommand{\tto}{\rightrightarrows}
\newcommand{\muhom}{\mu hom}
\newcommand{\muHom}[1][]{\mathrm{Hom}^\mu_{\raise1.5ex\hbox to.1em{}#1}}
\newcommand{\Hom}[1][]{\mathrm{Hom}_{\raise1.5ex\hbox to.1em{}#1}}
\newcommand{\RHom}[1][]{\RR\mathrm{Hom}_{\raise1.5ex\hbox to.1em{}#1}}
\newcommand{\Ext}[2][]{\mathrm{Ext}_{\raise1.5ex\hbox to.1em{}#1}^{#2}}
\renewcommand{\hom}[1][]{{\mathscr{H}\mspace{-4mu}om}_{\raise1.5ex\hbox to.1em{}#1}}
\newcommand{\rhom}[1][]{{\RR\mathscr{H}\mspace{-3mu}om}_{\raise1.5ex\hbox to.1em{}#1}}
\newcommand{\rhomc}[1][]
{{\mathscr{H}\mspace{-3mu}om}^*_{\raise1.5ex\hbox to.1em{}#1}}
\newcommand{\ext}[2][]{{\mathscr{E}xt}_{\raise1.5ex\hbox to.1em{}#1}^{#2}}
\newcommand{\Tor}[2][]{\mathrm{Tor}^{\raise1.5ex\hbox to.1em{}#1}_{#2}}
\newcommand{\tenso}[1][]{\mathbin{\otimes_{\raise1.5ex\hbox to-.1em{}{#1}}}}
\newcommand{\ltens}[1][]{\mathbin{\overset{\mathrm{L}}\tenso}_{#1}}
\newcommand{\etens}{\mathbin{\boxtimes}}
\newcommand{\letens}{\overset{\mathrm{L}}{\etens}}
\newcommand{\Endo}[1][]{\mathrm{End}_{\raise1.5ex\hbox to.1em{}#1}}
\newcommand{\Aut}[1][]{\mathrm{Aut}_{\raise1.5ex\hbox to.1em{}#1}}
\newcommand{\rsect}{\mathrm{R}\Gamma}
\newcommand{\conv}[1][]{\mathop{\circ}\limits_{#1}}
\newcommand{\aconv}[1][]{\mathop{\circ}\limits^{a}\limits_{#1}}
\newcommand{\shnt}{\shn_{\mathrm{tor}}}
\newcommand{\oim}[1]{{#1}_*}
\newcommand{\roim}[1]{\RR{#1}_*}
\newcommand{\reim}[1]{\RR{#1}_!}
\newcommand{\opb}[1]{#1^{-1}}
\newcommand{\epb}[1]{#1^{!}}
\newcommand{\hplus}[1][]{\mathop{+}\limits_{#1}}
\newcommand{\eqdot}{\mathbin{:=}}
\newcommand{\cl}{\colon}
\newcommand{\scbul}{{\,\raise.4ex\hbox{$\scriptscriptstyle\bullet$}\,}}
\newcommand{\hatp}{\mathbin{\widehat +}}
\newcommand{\hatstar}{\mathbin{\widehat \star}}
\newcommand{\npstar}{\mathbin{\star_{np}}}
\newcommand{\tw}[1]{\widetilde{#1}}
\newcommand{\ol}{\overline}
\newcommand{\bl}{\bigl(}
\newcommand{\br}{\bigr)}
\newcommand{\lp}{{\rm(}}
\newcommand{\rp}{{\rm)}}
\newcommand{\piw}{{\widehat \pi}}
\newcommand{\ba}{\begin{array}}
\newcommand{\ea}{\end{array}}
\newcommand{\bnum}{\begin{enumerate}[{\rm(i)}]}
\newcommand{\enum}{\end{enumerate}}
\newcommand{\banum}{\begin{enumerate}[{\rm(a)}]}
\newcommand{\eanum}{\end{enumerate}}
\newcommand{\eq}{\begin{eqnarray}}
\newcommand{\eneq}{\end{eqnarray}}
\newcommand{\eqn}{\begin{eqnarray*}}
\newcommand{\eneqn}{\end{eqnarray*}}
\newcommand{\set}[2]{\left\{#1 \mathbin{;} #2 \right\}}
\def\rop{{\rm op}}
\def\op{{\rm op}}
\DeclareMathOperator{\id}{id}
\DeclareMathOperator{\supp}{supp}
\DeclareMathOperator{\chv}{char}
\newcommand{\Supp}{\on{Supp}}
\newcommand{\Der}[1][]{\mathsf{D}^{#1}}
\newcommand{\Derb}{\Der[\mathrm{b}]}
\newcommand{\Derlb}{\Der[\mathrm{lb}]}
\newcommand{\SSi}{\mathrm{SS}}
\newcommand{\RD}{\mathrm{D}}
\newcommand{\Int}{{\rm Int}}
\newcommand{\dT}{{\dot{T}}}
\newcommand{\dTM}{{\dT}^*M}
\newcommand{\LG}{{L_\gamma}}
\newcommand{\lG}{{l_\gamma}}
\newcommand{\RG}{{R_\gamma}}
\newcommand{\rG}{{r_\gamma}}
\newcommand{\indlim}[1][]{\mathop{\varinjlim}\limits_{#1}}
\newcommand{\sinddlim}[1][]{\smash{\mathop{``{\varinjlim}"}\limits_{#1}}\,}
\nc{\eps}{\varepsilon}
\nc{\hs}{\hspace*}
\nc{\nn}{\nonumber}
\nc{\tM}{\widetilde{M}}
\nc{\h}{\mathbf{h}}
\nc{\tf}{\tilde{f}}
\nc{\codim}{\on{codim}}
\nc{\lh}{\mathscr{H}}
\nc{\bwr}{\mbox{\large{$\wr$}}}
\nc{\dTi}{\dT^{*,\mathrm{in}}}
\nc{\Cd}{\mathrm{C}}
\begin{document}

\date{February 15, 2012}

\title{Microlocal theory of sheaves and Tamarkin's non displaceability theorem}
\author{St{\'e}phane Guillermou and Pierre Schapira}

\maketitle

\begin{abstract}
This paper is an attempt to better understand Tamarkin's approach of classical
non-displaceability theorems of symplectic geometry, based on the microlocal
theory of sheaves, a theory whose main features we recall here. 
If the main theorems are due to Tamarkin, our proofs may be rather different and in the course of the paper 
we introduce some new notions and obtain new results which may be of interest.  
\end{abstract}

\section*{Introduction}
In~\cite{Ta}, D.~Tamarkin gives a totally new approach for treating 
classical problems of non-displaceability in symplectic geometry. His
approach is based on the microlocal theory of sheaves, introduced 
and systematically developed in \cite{KS82,KS85,KS90}. (Note however that
the use of 
the microlocal theory of sheaves also appeared in a related context 
in~\cite{KO01,NZ09,N09}.)

The aim of this paper was initially to better understand Tamarkin's ideas and to
give more accessible proofs by making full use of the tools
of~\cite{KS90} and of the recent paper~\cite{GKS10}. But when working
on this subject,  we found some new results which may be of interest. In
particular, we make here a systematic study of the category of torsion objects.
\medskip

Let us first briefly recall the main facts of the microlocal theory of sheaves. 
Consider a real manifold $M$ of class $C^\infty$ and 
 a commutative unital ring $\cor$ of finite global dimension. Denote by 
$\Derb(\cor_M)$ the bounded derived category of sheaves of 
$\cor$-modules on $M$. In loc.\ cit.\, the authors attach to
an object $F$ of $\Derb(\cor_M)$
its singular support, or microsupport, $\SSi(F)$, 
a closed subset of $T^*M$, the cotangent bundle to $M$. The microsupport is
conic for the action of  $\R^+$ on $T^*M$ and is involutive 
({\em i.e.,} co-isotropic). 
The microsupport allows one to localize the triangulated category 
$\Derb(\cor_M)$, and in particular to define the category 
$\Derb(\cor_M;U)$ for an open subset $U\subset T^*M$. 
This theory is ``conic'', that is, it is invariant by the
$\R^+$-action and is related to the homogeneous
symplectic structure  rather than the symplectic structure. 

In order to get rid of the homogeneity, a classical trick is to add a variable which
replaces it. This trick appears for example in 
the complex case in~\cite{PS04} where a deformation quantization  ring 
(with an $\hbar$-parameter) is constructed on the cotangent bundle
$T^*X$ to a complex manifold $X$ 
by using the ring  of microdifferential operators of~\cite{SKK} on $T^*(X\times\C)$. 
Coming back to the real setting, denote by $t$ a coordinate on
$\R$, by $(t;\tau)$ the
associated coordinates on $T^*\R$, 
by  $T^*_{\{\tau>0\}}(M\times\R)$ 
the open subset $\{\tau>0\}$ of $T^*(M\times\R)$ and consider the map
\eqn
&&\rho\cl T^*_{\{\tau>0\}}(M\times\R)\to T^*M,\quad (x,t;\xi,\tau)\mapsto
(x;{\xi}/{\tau}).
\eneqn
Tamarkin's idea is to work in the localized category 
$\Derb(\cor_{M\times\R};\{\tau>0\})$, the localization of 
$\Derb(\cor_{M\times\R})$ by the triangulated subcategory 
$\Derb_{\{\tau\leq0\}}(\cor_{M\times\R})$ consisting of sheaves with microsupport contained
in the set $\{\tau\leq0\}$. He first proves the useful result which
asserts that this localized category is equivalent to the left orthogonal to 
$\Derb_{\{\tau\leq0\}}(\cor_{M\times\R})$ and that the convolution by the
sheaf $\cor_{\{t\geq0\}}$ is a projector on this left orthogonal. 

Let us introduce the notation $\Derb(\cort_{M})\eqdot\Derb(\cor_{M\times\R};\{\tau>0\})$
and, for a closed subset $A\subset T^*M$, let us denote by $\Derb_A(\cort_{M})$
the full triangulated subcategory  of $\Derb(\cort_{M})$  consisting of
objects with microsupport contained in $\opb{\rho}A$. 

The first result of Tamarkin is a separability theorem.
If $A$ and $B$
are two compact subsets of $T^*M$, $F\in\Derb_A(\cort_{M})$, $G\in\Derb_B(\cort_{M})$,  
and if $A\cap B=\emptyset$, then $\Hom[\Derb(\cort_{M})](F,G)\simeq0$. 

The second result of Tamarkin is a Hamiltonian isotopy invariance
theorem, up to torsion, that is, 
after killing what he calls the torsion objects.
An object $F\in\Derb(\cort_{M})$ is torsion if there exists $c\geq0$ such
that the natural map $F\to \oim{T_c}(F)$ is zero, $\oim{T_c}(F)$ denoting the image of
$F$ by the translation $t\mapsto t+c$ in the $t$-variable.
Let $I$ be an open interval of $\R$ containing $[0,1]$ and 
let $\Phi=\{\phi_s\}_{s\in I}$ be a Hamiltonian isotopy
(with $\phi_0=\id$) such that there exists a compact set $C\subset T^*M$
satisfying $\phi_s|_{T^*M \setminus C} = \id_{T^*M \setminus C}$
for all $s\in I$.
Tamarkin constructs a functor 
$\Psi\cl\Derb_A(\cort_{M})\to\Derb_{\phi_1(A)}(\cort_{M})$ such that 
$\Psi(F)$ is isomorphic to $F$ modulo torsion, for any
 $F\in\Derb_A(\cort_{M})$.

From these two results he easily deduces that if $A,B\subset T^*M$ are 
compact sets 
and if there exist $F\in\Derb_A(\cort_{M})$, $G\in\Derb_B(\cort_{M})$
such that the map 
$\RHom[\Derb(\cort_{M})](F,G)\to\RHom[\Derb(\cort_{M})](F,T_c(G))$ is not zero
for all $c\geq0$, 
then the sets $A$ and $B$ are mutually non displaceable, that is, for
any Hamiltonian isotopy $\Phi$ as above and any $s\in I$,
$A\cap\phi_s(B)\not=\emptyset$.

\medskip
Let us describe the contents of this paper.

In Section~\ref{section:mts} we recall some constructions and results
of~\cite{KS90} on the microlocal theory of sheaves. 

In Section~\ref{section:GKS} we recall
the main theorem of~\cite{GKS10} which allows one to quantize
homogeneous Hamiltonian isotopies and we also give some geometrical
tools relying homogeneous and non homogeneous symplectic geometry.  

In Section~\ref{section:conv}
we study convolution of sheaves on a trivial vector bundle
$E=M\times V$ over $M$  as well as the category 
$\Derb(\cor_{E};U_\gamma)$, the localization of the category $\Derb(\cor_{E})$ on 
$U_\gamma=E\times V\times\Int(\gamma_0^\circ)$ where $\Int(\gamma_0^\circ)$
is the interior of the polar cone to a closed convex proper cone
$\gamma_0$ in $V$. We prove in particular a separability theorem in
this category.

In Section~\ref{section:tam1} we introduce the Tamarkin category 
$\Derb(\cort_{M})$, that is, the category $\Derb(\cor_{E};U_\gamma)$
for $E=M\times\R$ and $\gamma_0=\{t\geq0\}$.

In Section~\ref{section:tam2} we make a systematic study of 
the category $\shnt$ of torsion objects,
proving that this category is triangulated and also
proving that, under some hypothesis on the microsupport,
an object is torsion if and only if its restriction to
one point is torsion (Theorem~\ref{th:propag-of-torsion}).

Finally, in Section~\ref{section:tam3} we give a  proof of the 
 Hamiltonian isotopy invariance theorem of Tamarkin. The existence
of the functor $\Psi$ mentioned above is now an easy consequence  
on the results of~\cite{GKS10}, and one checks that this
functor induces a functor isomorphic to the identity functor modulo torsion.
As already mentioned, Tamarkin's non displaceability
theorem is an easy corollary of the preceding results. 

\medskip

Note that, for the purposes we have in mind, we do not need to consider the
unbounded derived category $\Der(\cor_M)$, as did Tamarkin, but only its full
triangulated category $\Derlb(\cor_M)$ consisting of locally bounded
objects. Also note that our notations, as well as our proofs, may seriously
differ from Tamarkin's ones.

\medskip

In a next future, motivated by the papers of Fukaya-Seidel-Smith~\cite{FSS08} and
Nadler \cite{N09}, we plan to use the tools developed here to study sheaves associated with 
smooth Lagrangian manifolds.

\medskip
\noindent
{\bf Acknowledgment}
We have been very much stimulated by the interest of 
Claude Viterbo for the applications of sheaf theory to symplectic topology
and it is a pleasure to thank him here. We also thank Masaki Kashiwara for 
many enlightning discussions.

\section{Microlocal theory of sheaves}
\label{section:mts}
In this section, we recall some definitions and results from
\cite{KS90}, following its notations with the exception of  slight
modifications. We consider a real manifold $M$ of class $C^\infty$.

\subsubsection*{Some geometrical notions  (\cite[\S~4.2,~\S~6.2]{KS90})}
For a locally closed subset $A$ of $M$, one denotes by $\Int(A)$
its interior and by $\overline{A}$ its closure.
One denotes by $\Delta_M$ or simply $\Delta$ the diagonal of $M\times M$. 

One denotes by $\tau\cl TM\to M$ and  $\pi\cl T^*M\to M$ the tangent and
cotangent bundles to $M$. If $L\subset M$ is a (smooth) submanifold, we
denote by $T_LM$ its normal bundle and $T^*_LM$ its conormal
bundle. They are defined by the exact sequences 
\eqn
&&0\to TL\to L\times_MTM\to T_LM\to0,\\
&&0\to T^*_LM\to L\times_MT^*M\to T^*L\to0.
\eneqn
One identifies $M$ to $T^*_MM$, the zero-section of $T^*M$.
One sets $\dTM\eqdot T^*M\setminus T^*_MM$ and one denotes by
$\dot\pi_M\cl\dTM\to M$ the projection.

Let $f\cl M\to N$ be  a morphism of  real manifolds. 
To $f$ are associated the tangent morphisms
\eq\label{diag:tgmor}
\xymatrix{
TM\ar[d]^-\tau\ar[r]^-{f'}&M\times_NTN\ar[d]^-\tau\ar[r]^-{f_\tau}&TN\ar[d]^-\tau\\
M\ar@{=}[r]&M\ar[r]^-f&N.
}\eneq
By duality, we deduce the diagram: 
\eq\label{diag:cotgmor}
&&\xymatrix{
T^*M\ar[d]^-\pi&M\times_N\ar[d]^-\pi\ar[l]_-{f_d}\ar[r]^-{f_\pi}T^*N
                                      & T^*N\ar[d]^-\pi\\
M\ar@{=}[r]&M\ar[r]^-f&N.
}\eneq
One sets 
\eqn
&&T^*_MN\eqdot\ker f_d= \opb{f_d}(T^*_MM). 
\eneqn
Note that, denoting by $\Gamma_f$ the graph of $f$ in $M\times N$, 
the projection $T^*(M\times N)\to M\times T^*N$ identifies 
$T^*_{\Gamma_f}(M\times N)$ and $M\times_NT^*N$.

For two subsets $S_1,S_2\subset M$, their Whitney's normal cone,
denoted $C(S_1,S_2)$, is the closed cone of $TM$ defined  as
follows. Let $(x)$ be a local coordinate system 
and let $(x;v)$ denote the associated
coordinate system on $TM$. Then 
\eqn
&&\left\{
\parbox{60ex}{
$(x_0;v_0)\in C(S_1,S_2)\subset TM$ if and only
if there exists a sequence $\{(x_n,y_n,c_n)\}_n\subset S_1\times S_2\times\R^+$ such
that $x_n\to[n]x_0$, $y_n\to[n]x_0$  and $c_n(x_n-y_n)\to[n]v_0$.
}
\right.
\eneqn
For a subset $S$ of $M$ and a smooth closed submanifold $L$ of $M$, 
the Whitney's normal cone of $S$ along $L$, denoted $C_L(S)$,
is the image in $T_LM$ of $C(L,S)$. If $L=\{p\}$, we  write
$C_p(S)$ instead of $C_{\{p\}}(S)$.

Now consider the homogeneous symplectic manifold $T^*M$: it is endowed
with the Liouville $1$-form given in a local homogeneous symplectic coordinate system 
$(x;\xi)$ on $T^*M$ by
\eqn
&&\alpha_M=\langle\xi,dx\rangle.
\eneqn
The antipodal map $a_M$ is defined by:
\eq\label{eq:antipodal}
&&a_M\cl T^*M\to T^*M,\quad(x;\xi)\mapsto(x;-\xi).
\eneq
If $A$ is a subset of $T^*M$, we denote by $A^a$ instead of $a_M(A)$ 
its image by the antipodal map.

We shall use the Hamiltonian isomorphism 
$H\cl T^*(T^*M)\isoto T(T^*M)$ given in a local symplectic coordinate system 
$(x;\xi)$ by
\eqn
&&H(\langle\lambda,dx\rangle +\langle \mu,d\xi\rangle)
=-\langle\lambda,\partial_\xi\rangle +\langle \mu,\partial_x\rangle.
\eneqn
\begin{definition}{\rm (see~\cite[Def. 6.5.1]{KS90})}
\label{def:coisotropic}
A subset $S$ of $T^*M$ is co-isotropic \lp one also says involutive\rp\, 
at $p\in T^*M$ if for any
$\theta\in T^*_pT^*M$ such that the  Whitney normal cone $C_p(S,S)$ is
contained in the hyperplane $\{v\in TT^*M;\langle v,\theta\rangle=0\}$, 
one has $-H(\theta)\in C_p(S)$.  A set $S$ is co-isotropic if it is so
at each $p\in S$. 
\end{definition}
When $S$ is smooth, one recovers the usual notion.

\subsubsection*{Microsupport}
We consider a commutative unital ring $\cor$ of finite global dimension 
({\em e.g.} $\cor=\Z$).
We denote by $\Der(\cor_M)$ (resp.\ $\Derb(\cor_M)$) 
the derived category (resp.\  bounded derived category) of sheaves
of $\cor$-modules on $M$.

Recall the definition 
of the microsupport (or singular support) $\SSi(F)$ of a sheaf $F$.

\begin{definition}{\rm (see~\cite[Def.~5.1.2]{KS90})}
Let $F\in \Derb(\cor_M)$ and let $p\in T^*M$. 
One says that $p\notin\SSi(F)$ if there exists an open neighborhood
$U$ of $p$ such that for any $x_0\in M$ and any
real $C^1$-function $\phi$ on $M$ defined in a neighborhood of $x_0$ 
satisfying $d\phi(x_0)\in U$ and $\phi(x_0)=0$, one has
$(\rsect_{\{x;\phi(x)\geq0\}} (F))_{x_0}\simeq0$.
\end{definition}
In other words, $p\notin\SSi(F)$ if the sheaf $F$ has no cohomology 
supported by ``half-spaces'' whose conormals are contained in a 
neighborhood of $p$. 
\begin{itemize}
\item
By its construction, the microsupport is closed and is
$\R^+$-conic, that is,
invariant by the action of  $\R^+$ on $T^*M$. 
\item
$\SSi(F)\cap T^*_MM =\pi_M(\SSi(F))=\Supp(F)$.
\item
The microsupport satisfies the triangular inequality:
if $F_1\to F_2\to F_3\to[+1]$ is a
distinguished triangle in  $\Derb(\cor_M)$, then 
$\SSi(F_i)\subset\SSi(F_j)\cup\SSi(F_k)$ for all $i,j,k\in\{1,2,3\}$
with $j\not=k$. 
\end{itemize}
\begin{theorem}{\rm (see~\cite[Th.~6.5.4]{KS90})}
Let $F\in \Derb(\cor_M)$. Then its microsupport 
$\SSi(F)$ is co-isotropic.
\end{theorem}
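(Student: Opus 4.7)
The plan is to argue by contradiction. Suppose that $p\in\SSi(F)$ but that co-isotropy fails at $p$, so that by Definition~\ref{def:coisotropic} there exists $\theta\in T^*_p T^*M$ with the two properties
\begin{itemize}
\item $C_p(\SSi(F),\SSi(F))\subset\{v\in T_p T^*M;\ \langle v,\theta\rangle=0\}$,
\item $-H(\theta)\notin C_p(\SSi(F))$.
\end{itemize}
The goal is to derive $p\notin\SSi(F)$, contradicting the assumption. Geometrically, the first condition says that, infinitesimally at $p$, $\SSi(F)$ is squeezed into the affine hyperplane with conormal $\theta$; the second says that the Hamiltonian direction $-H(\theta)$ transverse to that hyperplane genuinely leaves $\SSi(F)$, so there is room to move $p$ out along the flow of the Hamiltonian vector field associated with $\theta$.

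First I would normalize the situation by means of a homogeneous contact transformation: choose symplectic coordinates $(x;\xi)$ on $T^*M$ around $p$, pick a smooth function $\psi$ defined near $p$ with $d\psi(p)=\theta$ which is homogeneous of degree $1$ in $\xi$, and consider the Hamiltonian flow $\phi_s$ of $H(\psi)$. The hypothesis $-H(\theta)\notin C_p(\SSi(F))$ means that for all small enough $s>0$, the point $\phi_{-s}(p)$ lies in an open cone $V\subset T^*M$ disjoint from $\SSi(F)$. The hypothesis on $C_p(\SSi(F),\SSi(F))$ implies that along this flow, $\psi$ is \emph{non-characteristic} for $\SSi(F)$ at $p$ in the sense needed by the microlocal deformation lemma: pairwise, points of $\SSi(F)$ near $p$ differ to first order only in directions tangent to $\{\psi=\psi(p)\}$.

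The crucial step is then to apply a non-characteristic propagation argument to conclude that the property of $p$ belonging to $\SSi(F)$ must be invariant along the flow of $H(\psi)$. This is done using the test-function characterization of microsupport: the condition $p\in\SSi(F)$ is $(\rsect_{\{\phi\ge 0\}}F)_{x_0}\not\simeq 0$ for some $\phi$ with $d\phi(x_0)$ near the base projection of $p$. Deforming $\phi$ into a one-parameter family $\phi_s$ that realizes the Hamiltonian flow $\phi_s$ on cotangent coordinates, the microlocal version of the non-characteristic deformation lemma (the one built from the projection / Morse-type vanishing results of~\cite{KS90} Section~5.4--6.4) gives an isomorphism of the stalks $(\rsect_{\{\phi_s\ge 0\}}F)_{x_0}$ for $s$ in a small interval. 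Transporting through the flow, the non-triviality of the stalk is preserved along $s\mapsto \phi_{-s}(p)$, contradicting that $\phi_{-s}(p)\in V$ lies outside $\SSi(F)$.

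The hard part, and where essentially all of the work is concentrated, is the microlocal non-characteristic deformation step: one must convert the first-order information encoded by the Whitney cones $C_p(\SSi(F))$ and $C_p(\SSi(F),\SSi(F))$ into an actual, quantitative family of test half-spaces $\{\phi_s\ge 0\}$ whose cohomological stalks are genuinely comparable as $s$ varies. In practice this requires reducing to a normal form via a contact transformation that straightens $\theta$, producing a model situation in which the deformation is just a translation in a coordinate hyperplane, and then invoking the Morse-theoretic propagation estimates for $\rsect$ of~\cite{KS90}. Once that machinery is in place the contradiction is immediate and co-isotropy of $\SSi(F)$ follows.
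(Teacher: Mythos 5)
The paper does not prove this statement: it is quoted from \cite{KS90} (Th.~6.5.4) as background, so the only question is whether your sketch stands on its own, and it does not. The gap sits exactly where you write ``the crucial step is then to apply a non-characteristic propagation argument'': no such argument is available under your hypotheses. The condition $C_p(\SSi(F),\SSi(F))\subset\theta^{\perp}$ is an infinitesimal condition at the single point $p$; it does not place $\SSi(F)$ inside a smooth hypersurface $\{\psi=\psi(p)\}$ on a whole neighborhood of $p$, which is what any propagation theorem along the flow of $H(\psi)$, and any version of the non-characteristic deformation lemma, requires as input (a set that is ``thin to first order at $p$'' need not be contained in any $C^1$ hypersurface near $p$). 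The tools you invoke --- the microlocal Morse lemma (Corollary~\ref{cor:Morse}, i.e.\ Cor.~5.4.19 of \cite{KS90}) and the direct/inverse image estimates of \S 5.4 of loc.\ cit.\ --- control deformations of open subsets of the base $M$; they do not transport the non-vanishing of $(\rsect_{\{\phi\geq 0\}}F)_{x_0}$ along a flow in $T^*M$ that moves the covector as well as the base point. Establishing that transport under the sole Whitney-cone hypothesis at $p$ is not a routine application of ``the machinery'': it is essentially the whole theorem, so the argument as written is circular at its core.

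For comparison, the proof in \cite{KS90} quantizes nothing. It sets $K=\muhom(F,F)$, an object on the manifold $X=T^*M$, and uses two facts: first, $\supp(K)=\SSi(F)$ (if $\muhom(F,F)_p\simeq 0$ then $\id_F$ vanishes in $\Derb(\cor_M;p)$, hence $p\notin\SSi(F)$); second, $\SSi(K)\subset C(\SSi(F),\SSi(F))$, the Whitney cone being viewed inside $T^*(T^*M)$ through the Hamiltonian isomorphism (Cor.~6.4.4--6.4.5 of \cite{KS90}, recalled here as Theorem~\ref{th:opboim2}). The hypothesis $C_p(S,S)\subset\theta^{\perp}$ then confines $\SSi(K)$ over $p$ to the line $\R\theta$, and an elementary lemma comparing the microsupport of $K$ with the normal cone of its support yields $-H(\theta)\in C_p(S)$. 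If you want to rescue a ``flow'' proof, you would first have to prove a propagation statement for microsupports that are only infinitesimally thin at one point --- and that is precisely what the $\muhom$ detour is designed to replace.
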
 
In the sequel, for a locally closed subset $Z$ in $M$, we denote by
$\cor_Z$ the constant sheaf with stalk $\cor$ on $Z$, extended by $0$
on $M\setminus Z$.
\begin{example}
(i) If $F$ is a non-zero local system on a connected manifold $M$,
then $\SSi(F)=T^*_MM$, the zero-section.

\noindent
(ii) If $N$ is a smooth closed submanifold of $M$ and $F=\cor_N$, then 
$\SSi(F)=T^*_NM$, the conormal bundle to $N$ in $M$.

\noindent
(iii) Let $\phi$ be $C^1$-function with $d\phi(x)\not=0$ when $\phi(x)=0$.
Let $U=\{x\in M;\phi(x)>0\}$ and let $Z=\{x\in M;\phi(x)\geq0\}$. 
Then 
\eqn
&&\SSi(\cor_U)=U\times_MT^*_MM\cup\{(x;\lambda d\phi(x));\phi(x)=0,\lambda\leq0\},\\
&&\SSi(\cor_Z)=Z\times_MT^*_MM\cup\{(x;\lambda d\phi(x));\phi(x)=0,\lambda\geq0\}.
\eneqn

\noindent
(iv) Let $(X,\sho_X)$ be a complex manifold and let $\shm$ be a coherent module
over the ring $\shd_X$ of holomorphic differential operators. (Hence,
$\shm$ represents a system of linear partial differential equations on
$X$.) Denote by $F=\rhom[\shd_X](\shm,\sho_X)$ the complex of
holomorphic solutions of $\shm$. Then $\SSi(F)=\chv(\shm)$, the
characteristic variety of $\shm$.
\end{example}

\subsubsection*{Functorial operations (proper and non-characteristic cases)}
Let $M$ and $N$ be two real manifolds. We denote by $q_i$ ($i=1,2$)
the $i$-th projection defined on $M\times N$ and by $p_i$ ($i=1,2$)
the $i$-th projection defined on $T^*(M\times N)\simeq T^*M\times T^*N$.

\begin{definition}
Let $f\cl M\to N$ be a morphism of manifolds and let 
$\Lambda\subset T^*N$ be a closed $\R^+$-conic subset. One says that 
$f$ is non-characteristic for $\Lambda$ \lp or else, $\Lambda$ 
is non-characteristic for $f$, or $f$ and $\Lambda$ are transversal\rp\, if
\eqn
&&\opb{f_\pi}(\Lambda)\cap T^*_MN\subset M\times_NT^*_NN.
\eneqn
\end{definition}
A morphism $f\cl M\to N$ is non-characteristic for a closed
$\R^+$-conic subset $\Lambda$ of $T^*N$ if and
only if $f_d\cl M\times_NT^*N\to T^*M$ is proper on $\opb{f_\pi}(\Lambda)$
and in this case 
$f_d\opb{f_\pi}(\Lambda)$ is closed and $\R^+$-conic in $T^*M$.

We denote by $\omega_M$ the dualizing complex on $M$.
Recall that $\omega_M$ is isomorphic to the orientation sheaf shifted by the
dimension. We also use the notation $\omega_{M/N}$ for the relative
dualizing complex $\omega_M\tenso\opb{f}\omega_N^{\tenso-1}$.
We have the duality functors
\eq\label{eq:dualfct}
&&\RD_M(\scbul)=\rhom(\scbul,\omega_M), \\
&&\RD'_M(\scbul)=\rhom(\scbul,\cor_M).
\eneq
\begin{theorem}\label{th:opboim}{\rm(See \cite[\S~5.4]{KS90}.)}
Let $f\cl M\to N$ be a morphism of manifolds,
let $F\in\Derb(\cor_M)$ and let $G\in\Derb(\cor_N)$.  
\bnum
\item
One has
\eqn
&&\SSi(F\letens G)\subset\SSi(F)\times\SSi(G),\\
&&\SSi(\rhom(\opb{q_1}F,\opb{q_2}G))\subset\SSi(F)^a\times\SSi(G).
\eneqn 
\item
Assume that $f$ is proper on $\Supp(F)$. Then
$\SSi(\reim{f}F)\subset f_\pi\opb{f_d}\SSi(F)$.
\item
Assume that $f$ is non-characteristic with
respect to $\SSi(G)$. Then 
the natural morphism $\opb{f}G\tenso \omega_{M/N}\to\epb{f}(G)$
is an isomorphism. Moreover
$\SSi(\opb{f}G) \cup \SSi(\epb{f}G) \subset f_d\opb{f_\pi}\SSi(G)$.
\item
Assume that $f$ is smooth
 \lp that is, submersive\rp. Then $\SSi(F)\subset M\times_NT^*N$
if and only if, for any $j\in\Z$, the sheaves $H^j(F)$
are  locally constant on the fibers of $f$.
\enum
\end{theorem}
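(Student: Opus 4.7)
The four statements will all be reduced to the stalkwise vanishing criterion of $\cite{KS90}$: a point $p=(x_0;\xi_0)$ lies outside $\SSi(F)$ iff $\rsect_{\{\phi\geq0\}}(F)_{x_0}\simeq0$ for every $C^1$ test function $\phi$ defined near $x_0$ with $\phi(x_0)=0$ and $d\phi(x_0)$ in some neighborhood of $\xi_0$. The plan is to handle the four parts in the order (ii), (i), (iii), (iv), each time moving test functions around under the map $f$.

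For (ii), pick $q_0=(y_0;\eta_0)\notin f_\pi\opb{f_d}\SSi(F)$ and a test function $\psi$ near $y_0$ with $\psi(y_0)=0$ and $d\psi(y_0)=\eta_0$. Since $f$ is proper on $\Supp F$, one has $\reim{f}F\simeq\roim{f}F$ and proper base change yields
\eqn
&&\rsect_{\{\psi\geq0\}}(\roim{f}F)_{y_0}
\simeq\rsect\bl\opb{f}(y_0),\,\rsect_{\{\psi\circ f\geq0\}}(F)\vert_{\opb{f}(y_0)}\br.
\eneqn
The hypothesis means that for each $x\in\opb{f}(y_0)$ one has $d(\psi\circ f)(x)=f_d(x,\eta_0)\notin\SSi(F)$, so the inner germ vanishes at every point of the compact fiber and the right-hand side is zero.

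For (i), I decompose $F\letens G\simeq\opb{q_1}F\ltens\opb{q_2}G$ and estimate each factor separately. For the smooth projection $q_1$, a direct test-function argument shows $\SSi(\opb{q_1}F)\subset\SSi(F)\times T^*_NN$, and likewise $\SSi(\opb{q_2}G)\subset T^*_MM\times\SSi(G)$. The general bound $\SSi(A\ltens B)\subset\SSi(A)+\SSi(B)$, valid when the two summands meet transversally, then collapses to the product $\SSi(F)\times\SSi(G)$ because our two factors have microsupports whose $N$-component, respectively $M$-component, is zero. The $\rhom$ bound is obtained analogously by replacing $\opb{q_1}F$ with $\RD'_{M\times N}(\opb{q_1}F)$; the antipodal twist $\SSi(F)^a$ arises from the contravariance of $\rhom$ in its first argument.

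For (iii), I factor $f=p_2\circ i_f$ where $i_f\cl M\into M\times N$ is the graph embedding and $p_2\cl M\times N\to N$ is the projection. The smooth case is elementary: $\opb{p_2}G\tenso\omega_{M\times N/N}\isoto\epb{p_2}G$ and $\SSi(\opb{p_2}G)\subset T^*_MM\times\SSi(G)$ by (i). The non-characteristic hypothesis on $f$ translates exactly to saying that $T^*_{\Gamma_f}(M\times N)$ meets $\SSi(\opb{p_2}G)$ only in the zero section, which is precisely the non-characteristic condition for the closed embedding $i_f$ relative to the sheaf $\opb{p_2}G$. The isomorphism $\opb{i_f}H\tenso\omega_{M/M\times N}\isoto\epb{i_f}H$ for any $H$ transverse to $i_f$ follows from the excision triangle comparing $\opb{i_f}$ and $\rsect_M$, and the microsupport bound for $\opb{i_f}$ and $\epb{i_f}$ follows from (ii) together with its dual version obtained by applying the duality functor $\RD_M$ from~\eqref{eq:dualfct}. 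Composing the two factors gives the desired statement.

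For (iv), since $f$ is smooth I may work locally as a projection $f\cl N\times\R^k\to N$. The hypothesis $\SSi(F)\subset M\times_NT^*N$ says that every covector in $\SSi(F)$ has vanishing components along the fiber coordinates $s\in\R^k$, so linear functions in $s$ provide test functions transverse to $\SSi(F)$. The propagation theorem applied successively along the $k$ fiber coordinates then shows that restriction maps between small fiber-tube neighborhoods are isomorphisms, forcing $H^j(F)$ to be locally constant along fibers; the statement globalizes since the condition is local on $M$. The converse is a direct local computation of the microsupport of an inverse image of a local system under a projection. The step I expect to be the most delicate is the one in (iii) that produces the isomorphism $\opb{f}G\tenso\omega_{M/N}\isoto\epb{f}G$, since it goes beyond a mere microsupport estimate and requires constructing an actual morphism and controlling it via an excision-type triangle.
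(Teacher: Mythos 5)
The paper offers no proof of this statement---it is recalled verbatim from \cite[\S~5.4]{KS90}---so your sketch has to stand on its own, and as written part (i) is circular. You deduce the external product estimate from the bound $\SSi(A\ltens B)\subset\SSi(A)+\SSi(B)$ for microsupports meeting transversally; but that bound is Corollary~\ref{cor:opboim}~(i), which is itself obtained \emph{from} Theorem~\ref{th:opboim} by writing $A\ltens B\simeq\opb{\delta}(A\letens B)$ for the diagonal embedding $\delta$ and applying parts (i) and (iii). The external product estimate is exactly the point where a direct argument cannot be avoided: a test function $\phi(x,y)$ near a point of $M\times N$ need not split as $\phi_1(x)+\phi_2(y)$, and \cite{KS90} handles this with a refined characterization of the microsupport (their Prop.~5.2.1) rather than by reduction to a tensor product of pullbacks. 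The same objection applies to your $\rhom$ estimate: substituting $\RD'_{M\times N}(\opb{q_1}F)$ for $\opb{q_1}F$ presupposes $\rhom(A,B)\simeq\RD'A\ltens B$, which requires $A$ cohomologically constructible (cf.\ Corollary~\ref{cor:opboim}~(ii)) and fails for general $F$.

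Part (iii) has a second gap. For the graph embedding $i_f$ you assert that the microsupport bounds for $\opb{i_f}$ and $\epb{i_f}$ ``follow from (ii) together with its dual version''; but (ii) estimates direct images, and dualizing it does not convert it into an inverse-image estimate---the non-characteristic inverse image bound for a closed embedding is an independent statement (Prop.~5.4.13 of \cite{KS90}). More importantly, the isomorphism $\opb{f}G\tenso\omega_{M/N}\isoto\epb{f}G$, which you rightly single out as the delicate point, is not produced by an excision triangle alone: the content is that sections of $\opb{q_2}G$ propagate across the conormal directions to $\Gamma_f$ because its microsupport misses $\dot T^*_{\Gamma_f}(M\times N)$ outside the zero section, and this requires the non-characteristic deformation lemma (the same propagation mechanism you invoke in (iv)), not a formal comparison of $\opb{i_f}$ with $\rsect_{\Gamma_f}$. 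Parts (ii) and (iv) are essentially sound, modulo the routine remark that the hypotheses are open conditions, so the stalkwise vanishing you establish at $(y_0;\eta_0)$ extends to the neighborhood required by the definition of the microsupport.
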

For the notion of a cohomologically constructible sheaf we refer
to~\cite[\S~3.4]{KS90}.
\begin{corollary}\label{cor:opboim}
Let $F_1,F_2\in\Derb(\cor_M)$.
\bnum
\item
Assume that $\SSi(F_1)\cap\SSi(F_2)^a\subset T^*_MM$. Then
\eqn
&&\SSi(F_1\ltens F_2)\subset \SSi(F_1)+\SSi(F_2).
\eneqn
\item
Assume that $\SSi(F_1)\cap\SSi(F_2)\subset T^*_MM$. Then 
\eqn
&&\SSi(\rhom(F_1,F_2))\subset \SSi(F_1)^a+\SSi(F_2).
\eneqn
Moreover, assuming that $F_1$ is cohomologically constructible,
the natural morphism $\RD'F_1 \ltens F_2 \to \rhom(F_1,F_2)$
is an isomorphism.
\enum
\end{corollary}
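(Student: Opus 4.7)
The plan is to reduce both parts to Theorem~\ref{th:opboim} via the diagonal embedding $\Delta\cl M\into M\times M$, using the tautological identifications
\[
F_1\ltens F_2 \;\simeq\; \opb{\Delta}(F_1\letens F_2),\qquad
\rhom(F_1,F_2) \;\simeq\; \opb{\Delta}\rhom(\opb{q_1}F_1,\opb{q_2}F_2),
\]
and verifying in each case that the stated hypothesis is exactly what makes $\Delta$ non-characteristic for the relevant microsupport.

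For (i), first apply Theorem~\ref{th:opboim}(i) to obtain $\SSi(F_1\letens F_2)\subset \SSi(F_1)\times\SSi(F_2)$. An element of $T^*_M(M\times M)$ over a diagonal point $(x,x)$ has the form $((x;\xi),(x;-\xi))$, so it lies in $\SSi(F_1)\times\SSi(F_2)$ exactly when $(x;\xi)\in\SSi(F_1)\cap\SSi(F_2)^a$; hence the hypothesis is precisely the non-characteristic condition for $\Delta$. Applying Theorem~\ref{th:opboim}(iii) and noting that, under the identification of the conormal bundle $T^*_{\Delta}(M\times M)\simeq M\times_MT^*M$, the map $\Delta_d$ is the componentwise sum $((x;\xi),(x;\eta))\mapsto(x;\xi+\eta)$, yields the bound $\SSi(F_1)+\SSi(F_2)$.

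Part (ii) proceeds identically, now starting from the bound $\SSi(\rhom(\opb{q_1}F_1,\opb{q_2}F_2))\subset \SSi(F_1)^a\times\SSi(F_2)$ given by Theorem~\ref{th:opboim}(i). The same translation shows that the non-characteristic condition for $\Delta$ against this set becomes $\SSi(F_1)\cap\SSi(F_2)\subset T^*_MM$, and Theorem~\ref{th:opboim}(iii) then delivers the desired bound $\SSi(F_1)^a+\SSi(F_2)$.

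For the moreover clause, the morphism $\RD'F_1\ltens F_2\to\rhom(F_1,F_2)$ is the classical one induced by $\rhom(F_1,\cor_M)\ltens F_2\to\rhom(F_1,\cor_M\ltens F_2)$, which is available whenever $F_1$ is cohomologically constructible. To show it is an isomorphism, I would argue locally: by part (i) applied to $\RD'F_1$ (whose microsupport equals $\SSi(F_1)^a$ for cohomologically constructible $F_1$), the left-hand side has microsupport in $\SSi(F_1)^a+\SSi(F_2)$, matching the bound from (ii), and so does the cone of the morphism. One can then reduce, using the local stratified structure of cohomologically constructible sheaves, to the case $F_1=\cor_Z$ for a locally closed $Z$ with smooth boundary, where both sides admit explicit descriptions and agree under the transversality hypothesis $\SSi(F_1)\cap\SSi(F_2)\subset T^*_MM$. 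This last step is where I expect the main difficulty: parts (i) and (ii) are essentially formal bookkeeping on top of Theorem~\ref{th:opboim}, but upgrading the natural morphism to an isomorphism demands combining the cohomologically constructible structure of $F_1$ with the microsupport transversality in a genuinely local analysis, and is the only place where the pre-triangulated machinery of Theorem~\ref{th:opboim} does not do the work by itself.
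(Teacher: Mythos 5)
This corollary is recalled from~\cite[\S~5.4]{KS90}; the paper supplies no proof of its own, so I compare with the standard argument. Part~(i) is fine. In part~(ii) the claimed ``tautological'' identification $\rhom(F_1,F_2)\simeq\opb{\Delta}\rhom(\opb{q_1}F_1,\opb{q_2}F_2)$ is false in general: with $M=\R$ and $F_1=F_2=\cor_{\{0\}}$ the right side equals $\cor_{\{0\}}[-1]$ while the left side equals $\cor_{\{0\}}$. The identity that does hold unconditionally, by adjunction for the closed embedding $\Delta$ together with $q_2\circ\Delta=\id_M$, is $\rhom(F_1,F_2)\simeq\epb{\Delta}\rhom(\opb{q_1}F_1,\epb{q_2}F_2)$. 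Since $\epb{q_2}F_2\simeq\opb{q_2}F_2\tenso\omega_{M\times M/M}$ has the same microsupport as $\opb{q_2}F_2$, Theorem~\ref{th:opboim}~(i) still bounds the microsupport by $\SSi(F_1)^a\times\SSi(F_2)$, and Theorem~\ref{th:opboim}~(iii), applied to $\epb{\Delta}$, yields the estimate (its non-characteristic condition being exactly $\SSi(F_1)\cap\SSi(F_2)\subset T^*_MM$, as you correctly identified). So the estimate in~(ii) is salvageable, but the ``tautology'' must be replaced by the $\epb{}$-version, or else the isomorphism involving $\opb{\Delta}$ has to be deduced from Theorem~\ref{th:opboim}~(iii) rather than assumed at the outset.

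The sketch of the ``moreover'' clause is the real gap. Cohomologically constructible objects need not admit a stratification, so the proposed reduction to $F_1=\cor_Z$ is not available; and even granting it, you would still have to prove the isomorphism in that model case. The intended argument is the one the paper has already prepared: by the distinguished triangle~\eqref{eq:dtmuhom} (valid precisely because $F_1$ is cohomologically constructible), the cone of $\RD'F_1\ltens F_2\to\rhom(F_1,F_2)$ is $\roim{\dot\pi_M}\bigl(\muhom(F_1,F_2)|_{\dTM}\bigr)$, and since $\supp\muhom(F_1,F_2)\subset\SSi(F_1)\cap\SSi(F_2)\subset T^*_MM$, the restriction to $\dTM$ vanishes. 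This replaces the vague devissage you proposed by a direct application of the $\muhom$ formalism.
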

The next result follows immediately from Theorem~\ref{th:opboim}~(ii). It is
a particular case of the microlocal Morse lemma
(see~\cite[Cor.~5.4.19]{KS90}), the classical theory corresponding to the
constant sheaf $F=\cor_M$.
\begin{corollary}\label{cor:Morse}
Let $F\in\Derb(\cor_M)$, 
let $\phi\cl M\to\R$ be a function of class $C^1$ and assume that $\phi$
is proper on $\supp(F)$. 
Let $a<b$ in $\R$
and assume that $d\phi(x)\notin\SSi(F)$ for $a\leq \phi(x)<b$. Then the
natural morphism \\
$\rsect(\opb{\phi}(]-\infty,b[);F)\to\rsect(\opb{\phi}(]-\infty,a[);F)$
is an isomorphism.
\end{corollary}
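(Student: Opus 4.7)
The plan is to reduce the statement to a one-dimensional propagation problem on $\R$ by pushing $F$ forward along $\phi$, and then to extract that propagation from the very definition of microsupport. Since $\phi$ is proper on $\supp(F)$, I would set $G \seteq \roim{\phi} F \simeq \reim{\phi} F$; by base change $\rsect(\opb{\phi}(]-\infty,c[); F) \simeq \rsect(]-\infty,c[; G)$ for every $c \in \R$, so it suffices to prove that $\rsect(]-\infty,b[; G) \to \rsect(]-\infty,a[; G)$ is an isomorphism.

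To feed in the microlocal hypothesis, I would invoke Theorem~\ref{th:opboim}(ii), which gives $\SSi(G) \subset \phi_\pi \opb{\phi_d}(\SSi(F))$. Identifying $M \times_\R T^*\R \simeq M \times \R$, a point $(t_0;\tau_0)$ belongs to this image if and only if there exists $x_0 \in \opb{\phi}(t_0)$ with $(x_0; \tau_0\,d\phi(x_0)) \in \SSi(F)$; by $\R^+$-conicity of $\SSi(F)$, for $\tau_0 > 0$ this is equivalent to $(x_0; d\phi(x_0)) \in \SSi(F)$, which the hypothesis excludes whenever $a \leq t_0 < b$. Hence $(t_0;\tau_0) \notin \SSi(G)$ for all $\tau_0 > 0$ and $t_0 \in [a,b)$.

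For the one-dimensional step, I would argue pointwise. For each $t_0 \in [a,b)$, applying the definition of microsupport at $(t_0;1)$ with the test function $\psi(t) = t - t_0$ yields $(\rsect_{\{t \geq t_0\}}(G))_{t_0} \simeq 0$. After shrinking to a small enough neighborhood $V_\epsilon = (t_0-\epsilon, t_0+\epsilon)$, this upgrades to acyclicity of $\rsect_{[t_0, t_0+\epsilon[}(V_\epsilon; G)$; the closed/open decomposition $V_\epsilon = [t_0, t_0+\epsilon[ \,\cup\, (t_0-\epsilon, t_0)$ then produces $\rsect(V_\epsilon; G) \isoto \rsect((t_0-\epsilon, t_0); G)$, and a Mayer--Vietoris argument against $]-\infty,t_0[$ promotes this further to $\rsect(]-\infty, t_0+\epsilon[; G) \isoto \rsect(]-\infty, t_0[; G)$.

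To pass from these local isomorphisms to the global one, I would set $I \seteq \{s \in [a,b] \mid \rsect(]-\infty,b[; G) \to \rsect(]-\infty,s[; G) \text{ is an isomorphism}\}$. The local step shows that $I$ is open in $[a,b]$, and $I$ is closed because $\rsect(]-\infty,s[; G)$ can be recovered as a limit of $\rsect(]-\infty,s'[; G)$ for $s' \nearrow s$; since $b \in I$, connectedness of $[a,b]$ forces $a \in I$. The cleanest packaging of this argument is the non-characteristic deformation lemma \cite[Prop.~2.7.2]{KS90} applied to the family $U_s = \opb{\phi}(]-\infty,s[)$, whose required hypotheses are verified exactly by the microsupport computation of paragraph two. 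I expect the main technical difficulty to lie in this last step, since the definition of microsupport only supplies vanishing of a stalk and some care is needed to turn that into genuine acyclicity on a small open $V_\epsilon$ and then to string the resulting local isomorphisms together into the asserted global one.
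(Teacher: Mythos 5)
Your proof is correct and follows exactly the route the paper intends: the corollary is stated there as an immediate consequence of Theorem~\ref{th:opboim}~(ii) and of the microlocal Morse lemma \cite[Cor.~5.4.19]{KS90}, whose standard proof is precisely your pushforward-along-$\phi$ plus non-characteristic-deformation argument. The only caveat is that your intermediate claim that vanishing of the single stalk $(\rsect_{\{t\geq t_0\}}(G))_{t_0}$ upgrades to acyclicity of $\rsect_{[t_0,t_0+\epsilon[}(V_\epsilon;G)$ is not literally true as stated (one needs the analogous vanishing at all nearby points $t_1$, which the definition of the microsupport does supply), but you flag this yourself and the final appeal to \cite[Prop.~2.7.2]{KS90} closes that gap correctly.
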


\begin{corollary}\label{cor:opbeqv}
Let $I$ be a contractible manifold and let $p\cl M\times I\to M$ be the projection.
If $F\in\Derb(\cor_{M\times I})$ satisfies  $\SSi(F)\subset T^*M\times T^*_II$,
then $F\simeq\opb{p}\roim{p}F$.
\end{corollary}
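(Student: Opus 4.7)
The plan is to show $F\simeq \opb{p}\opb{s}F$ for a section $s$ of $p$ and then identify $\opb{s}F$ with $\roim{p}F$ via homotopy invariance on the contractible fibers. By Theorem~\ref{th:opboim}(iv) applied to the smooth submersion $p$, the hypothesis $\SSi(F)\subset T^*M\times T^*_II$ is equivalent to: each $H^j(F)$ is locally constant on the fibers $\{x\}\times I$ of $p$. Since $I$ is contractible, hence simply connected, these restrictions are in fact constant sheaves, with value the stalk at any chosen basepoint.

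Consider the adjunction counit $\alpha\cl\opb{p}\roim{p}F\to F$ and its cone $C$. By Theorem~\ref{th:opboim}(iii) for the smooth $p$, one has $\SSi(\opb{p}G)\subset\{(x,i;\xi,0):(x;\xi)\in\SSi(G)\}\subset T^*M\times T^*_II$ for every $G\in\Derb(\cor_M)$. Combined with the triangular inequality for microsupport and the hypothesis on $F$, this gives $\SSi(C)\subset T^*M\times T^*_II$, so that the cohomology sheaves $H^j(C)$ are constant along each fiber $\{x\}\times I$ as well. Fix $i_0\in I$ and let $s\cl M\to M\times I$ be the section $x\mapsto(x,i_0)$. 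Then $C\simeq 0$ in $\Derb(\cor_{M\times I})$ is equivalent to $\opb{s}C\simeq 0$ in $\Derb(\cor_M)$: fiberwise constancy forces vanishing everywhere from vanishing on $s(M)$. Since $p\circ s=\id_M$ gives $\opb{s}\opb{p}\simeq\id$, the map $\opb{s}\alpha$ identifies with the natural morphism $\roim{p}F\to\opb{s}F$, and the problem reduces to proving this is an isomorphism.

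For this last step I check on stalks at $x_0\in M$. One must compare $(\roim{p}F)_{x_0}=\varinjlim_{U\ni x_0}\rsect(U\times I;F)$ with $(\opb{s}F)_{x_0}=F_{(x_0,i_0)}$. Taking $U$ small and contractible, the cohomology sheaves of $F|_{U\times I}$ remain constant along each fiber $\{x\}\times I$, and a Postnikov/truncation induction based on the vanishing $\rsect(I;\cor_I)\simeq\cor$ for $I$ contractible upgrades this cohomology-level data to the derived-category identification $F|_{U\times I}\simeq\opb{p}(\opb{s}F|_U)$; hence $\rsect(U\times I;F)\simeq\rsect(U;\opb{s}F)$, and the colimit over $U\ni x_0$ is $(\opb{s}F)_{x_0}$. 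The main technical obstacle is precisely this Postnikov lift—passing from fiberwise constancy of \emph{cohomology} to a constant object in $\Derb$—where contractibility of $I$ (beyond mere simple connectedness) is essential, the relevant input being the vanishing of higher extension groups $\mathrm{Ext}^{>0}(\opb{p}A,\opb{p}B)$ that would otherwise obstruct the splitting of the Postnikov tower.
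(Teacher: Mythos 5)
Your first two paragraphs are a correct and essentially complete reduction, and they already do more than the paper itself, whose entire proof consists of invoking Theorem~\ref{th:opboim}~(iv) to get fiberwise local constancy and then citing \cite[Prop.~2.7.8]{KS90}. The microsupport estimate for $\opb{p}\roim{p}F$, the triangular inequality applied to the cone $C$ of the counit, and the observation that fiberwise constancy of the $H^j(C)$ (constancy because $I$ is simply connected) reduces everything to showing that $\opb{s}\alpha\cl\roim{p}F\to\opb{s}F$ is an isomorphism, are all fine.

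The gap is in your last paragraph, which is exactly where the content of the cited \cite[Prop.~2.7.8]{KS90} lives. First, the assertion $F|_{U\times I}\simeq\opb{p}(\opb{s}F|_U)$ for small $U$ is nothing but the statement of the corollary over $U\times I$ (note that $\roim{p}$ commutes with restriction to $\opb{p}(U)=U\times I$, and being an isomorphism is local on $M$); justifying it by an unexecuted ``Postnikov/truncation induction'' is circular. Second, the input you name for that induction is false: $\mathrm{Ext}^{>0}(\opb{p}A,\opb{p}B)$ does not vanish in general (already $\mathrm{Ext}^{1}(\opb{p}\cor_{\{x_0\}},\opb{p}\cor_U)\simeq\cor$ for $U$ open in $\R$), and the Postnikov tower does not split. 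What one actually needs is that $\opb{p}$ is fully faithful, $\RHom(\opb{p}A,\opb{p}B)\simeq\RHom(A,B)$; but this is equivalent to $B\isoto\roim{p}\opb{p}B$, i.e.\ to the homotopy-invariance statement you are in the middle of proving. (Even the base of the induction --- that a single sheaf with constant restrictions to the fibers is a pullback --- is not established by what you wrote.) Third, even granting $F|_{U\times I}\simeq\opb{p}G$, the identification $\rsect(U\times I;\opb{p}G)\simeq\rsect(U;G)$ for non-constant $G$ and non-proper $p$ is itself the Vietoris--Begle/homotopy-invariance theorem. At some point you must prove $\rsect(U\times I;F)\isoto\rsect(U\times\{i_0\};\opb{s}F)$ by a genuine argument --- for instance the open-and-closed connectedness argument over an interval used in the proof of \cite[Prop.~2.7.5--2.7.8]{KS90} --- rather than by reformulations of the desired conclusion.
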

\begin{proof}
It follows from Theorem~\ref{th:opboim}~(iv) that the restriction
$F\vert_{\{x\} \times I}$ is locally constant for any $x\in M$. Then the
result follows from~\cite[Prop.~2.7.8]{KS90}.
\end{proof}
\begin{corollary}\label{cor:rsectFt}
Let $I$ be an open interval of $\R$ and let $q\cl M\times I\to I$ be the projection.
Let $F\in\Derb(\cor_{M\times I})$ such that 
$\SSi(F) \cap (T^*_MM \times T^*I) \subset T^*_{M\times I}(M\times I)$ and 
$q$ is proper on $\Supp(F)$.
Then we have isomorphisms 
$\rsect(M;F_s) \simeq \rsect(M;F_t)$ for any $s,t\in I$.
\end{corollary}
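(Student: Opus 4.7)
The plan is to realize $\rsect(M;F_t)$ as the stalk at $t$ of a single sheaf on $I$ and then to show that this sheaf is constant. Set $G \seteq \roim{q} F \in \Derb(\cor_I)$.

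First I would apply proper base change to identify $\opb{j_s} G \simeq \rsect(M;F_s)$ for each $s \in I$, where $j_s \cl \{s\} \into I$ and $i_s \cl M \into M\times I$, $x \mapsto (x,s)$, are the obvious inclusions. Since $q$ is proper on $\Supp(F)$ one has $\reim{q} F \simeq \roim{q} F$, and the slice $\Supp(F_s) \subset q^{-1}(s)\cap\Supp(F)$ is automatically compact, so base change in the Cartesian square built from $j_s$, $q$, $i_s$ and $M\to\{s\}$ delivers the claimed isomorphism.

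Next I would estimate $\SSi(G)$ via Theorem~\ref{th:opboim}~(ii): $\SSi(G) \subset q_\pi \opb{q_d}\SSi(F)$. Under the identification $(M\times I)\times_I T^*I \simeq M\times T^*I$ the map $q_d$ is $(x,t;\tau)\mapsto((x,t);0,\tau)$, so $\opb{q_d}\SSi(F)$ corresponds to $\SSi(F)\cap(T^*_M M\times T^*I)$. By hypothesis this is contained in $T^*_{M\times I}(M\times I)=T^*_M M\times T^*_I I$, and pushing forward by $q_\pi$ gives
\eqn
&&\SSi(G)\subset T^*_I I.
\eneqn

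Since $I$ is an open interval, it is contractible, so Corollary~\ref{cor:opbeqv} applied with $M$ replaced by a point yields $G \simeq \opb{p}\roim{p} G$ for $p\cl I\to\rmpt$; in other words $G$ is a constant complex on $I$. In particular $\opb{j_s} G \simeq \opb{j_t} G$ for all $s,t \in I$, and combining this with the first step produces the desired isomorphisms $\rsect(M;F_s) \simeq \rsect(M;F_t)$. The only delicate point is the microsupport calculation in the second step: the hypothesis of the corollary is engineered precisely so that $\opb{q_d}\SSi(F)$ falls into the zero section, after which everything else is a routine assembly of the tools already recalled.
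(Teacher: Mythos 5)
Your proof is correct and follows essentially the same route as the paper's: estimate $\SSi(\roim{q}F)\subset T^*_II$ via Theorem~\ref{th:opboim}~(ii), conclude that $\roim{q}F$ is a constant complex on the contractible interval $I$, and identify its stalks with $\rsect(M;F_s)$ by proper base change. The paper's version is merely terser, leaving the $q_d$/$q_\pi$ computation and the base-change step implicit.
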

\begin{proof}
It follows from Theorem~\ref{th:opboim} that
$\SSi(\roim{q}(F))\subset T^*_II$. Hence, there exists
$V\in\Derb(\cor)$ and an isomorphism
$\roim{q}(F) \simeq V_I$.
(Recall that $V_I=\opb{a_I}V$, where $a_I\to\rmptt$ is the projection and $V$
is identified to a sheaf on $\rmptt$.)  
Since we have
$\rsect(M; F_s) \simeq (\roim{q}(F))_s$
the result follows.
\end{proof}

\subsubsection*{Kernels (\cite[\S~3.6]{KS90})}
\begin{notation}\label{not:Mijk}
Let $M_i$ ($i=1,2,3$) be manifolds. For short, we write $M_{ij}\eqdot M_i\times M_j$ 
($1\leq i,j\leq3$) and $M_{123}=M_1\times M_2\times M_3$. 
We denote by $q_i$  the projection $M_{ij}\to M_i$ or the projection $M_{123}\to M_i$
and by $q_{ij}$ the projection $M_{123}\to M_{ij}$. Similarly, we denote by
$p_i$  the projection $T^*M_{ij}\to T^*M_i$ or the projection $T^*M_{123}\to T^*M_i$
and by $p_{ij}$ the projection $T^*M_{123}\to T^*M_{ij}$. We also need
to introduce the map $p_{12^a}$, the composition of $p_{12}$ and the
antipodal map on $T^*M_2$. 

Let $A\subset T^*M_{12}$ and $B\subset T^*M_{23}$. We set
\eq\ba{rcl}\label{eq:convolution_of_sets}
&&A\times_{T^*M_{2^a}}B=\opb{p_{12}}(A)\cap\opb{p_{2^a3}}(B)\\
&&A\aconv B=p_{13}(A\times_{T^*M_{2^a}}B)\\
&&\hspace{6.1ex}=\{(x_1,x_3;\xi_1,\xi_3)\in T^*M_{13};\text{ there exists }
(x_2;\xi_2)\in T^*M_2,\\
&&\hspace{10ex}(x_1,x_2;\xi_1,\xi_2)\in A, 
(x_2,x_3;-\xi_2,\xi_3)\in B\}.
\ea\eneq
\end{notation}
We consider the operation of composition of kernels:
\eq\label{eq:conv00}
&&\ba{rcl}
\conv\cl\Derb(\cor_{M_{12}})\times\Derb(\cor_{M_{23}})&\to&\Derb(\cor_{M_{13}})\\
(K_1,K_2)&\mapsto&K_1\conv K_2\eqdot
\reim{q_{13}}(\opb{q_{12}}K_1\ltens\opb{q_{23}}K_2).
\ea
\eneq
Let  $A_i=\SSi(K_i)\subset T^*M_{i,i+1}$  and assume that  
\eq\label{eq:noncharker}
&&\left\{
\parbox{60ex}{
(i) $q_{13}$ is proper on $\opb{q_{12}}\supp(K_1)\cap\opb{q_{23}}\supp(K_2)$,
\\[2ex]
(ii) 
$\ba[t]{l}
\opb{p_{12}}A_1\cap\opb{p_{2^a3}}A_2
\cap (T^*_{M_1}M_1\times T^*M_2\times T^*_{M_3}M_3)\\[1ex]
\hs{10ex} \subset T^*_{M_1\times M_2\times M_3}(M_1\times M_2\times M_3).
\ea$
}\right.
\eneq
It follows from Theorem~\ref{th:opboim} that under the
assumption~\eqref{eq:noncharker} we have:
\eq\label{eq:convolution_of_kern}
&&\SSi(K_1\conv K_2)\subset A_1\aconv A_2.
\eneq

\subsubsection*{Characteristic inverse images}
Theorem~\ref{th:opboim} treats the easy cases of external tensor
product or external $\Hom$, non-characteristic inverse images or proper
direct image. 
In order to treat more general cases we introduce some  additional geometrical
notions. 

Let $\Lambda$ be a smooth Lagrangian submanifold of $T^*M$. 
The Hamiltonian isomorphism defines an isomorphism
\eqn
&&T^*\Lambda\simeq T_{\Lambda}T^*M.
\eneqn
Let $j\cl L\hookrightarrow M$ be the embedding of a smooth submanifold
$L$ of $M$. The Liouville form defines an embedding 
\eqn
&&T^*L\hookrightarrow T^*T^*_LM\simeq T_{T^*_LM}T^*M.
\eneqn
Now consider  a morphism of manifolds  $f\cl M\to N$ and let us
identify $M$ to the graph of $f$ in $M\times N$.
For a subset $B\subset T^*N$ one sets:
\eq\label{eq:sharp}
&&f^{\sharp}(B)=T^*M\cap C_{T^*_M(M\times N)}(T^*_MM\times B).
\eneq
In local symplectic coordinate systems $(x;\xi)$ on $M$ and $(y;\eta)$ on $N$ 
one has
\eq\label{def:fdiese}
&\left\{
\parbox{55ex}{
$(x_0;\xi_0)\in f^{\sharp}(B)$ if and only if there exist sequences
$\{x_n\}_n\subset M$ and  $\{(y_n;\eta_n)\}_n\subset B$ such that \\
$x_n\to x_0$, ${}^tf'(x_n)\cdot\eta_n\to[n]\xi_0$ and 
$\vert y_n-f(x_n)\vert\cdot\vert\eta_n\vert\to[n]0$.
}
\right.\eneq
For two  closed $\R^+$-conic subsets $A$ and $B$ of $T^*M$ one sets
\eq
&&A\hatp B=T^*M\cap C(A,B^a).
\eneq
Here, $C(A,B^a)$ is considered as a subset of $T^*T^*M$ via the 
Hamiltonian isomorphism and $T^*M$ is embedded into $T^*T^*M$ via the
Liouville form $\alpha_M$. 
In a local coordinate system, one has
\eq\label{def:+hat}
&\left\{
\parbox{50ex}{
$(z_0;\zeta_0)\in A\hatp B$ if and only if there exist sequences 
$\{(x_n;\xi_n)\}_n$ in $A$ and $\{(y_n;\eta_n)\}_n$ in $B$ such that
$x_n\to[n]z_0$, $y_n\to[n]z_0$, $\xi_n+\eta_n\to[n]\zeta_0$ and 
$\vert x_n-y_n\vert\cdot\vert\xi_n\vert\to[n]0$.
}
\right.\eneq

\begin{theorem}\label{th:opboim2}{\rm (See \cite[Cor.~6.4.4,~6.4.5]{KS90}.)}
Let $F_1,F_2\in\Derb(\cor_M)$ and let $G\in\Derb(\cor_N)$. Then 
\eqn
&&\SSi(F_1\ltens F_2)\subset \SSi(F_1)\hatp\SSi(F_2),\\
&&\SSi(\rhom(F_1,F_2))\subset\SSi(F_2)\hatp\SSi(F_1)^a,\\
&&\SSi(\opb{f}G)\cup\SSi(\epb{f}G)\subset f^{\sharp}(\SSi(G)).
\eneqn
\end{theorem}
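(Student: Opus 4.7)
The plan is to reduce all three statements to the inverse-image estimate $\SSi(\opb{f}G)\cup\SSi(\epb{f}G)\subset f^\sharp(\SSi(G))$, and then to prove the latter by factoring $f$ through its graph.

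First I would handle the reductions. Let $\Delta_M\cl M\to M\times M$ be the diagonal embedding, and write $q_i\cl M\times M\to M$ for the projections. Then $F_1\ltens F_2\simeq \opb{\Delta_M}(F_1\letens F_2)$, and by Theorem~\ref{th:opboim}~(i) we have $\SSi(F_1\letens F_2)\subset\SSi(F_1)\times\SSi(F_2)$. Granting the inverse-image estimate, one obtains $\SSi(F_1\ltens F_2)\subset \Delta_M^\sharp(\SSi(F_1)\times\SSi(F_2))$, and a direct computation from \eqref{def:fdiese} shows that this set is precisely $\SSi(F_1)\hatp\SSi(F_2)$ (compare \eqref{def:+hat}). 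The $\rhom$ statement is obtained in the same way from $\rhom(F_1,F_2)\simeq \epb{\Delta_M}\rhom(\opb{q_1}F_1,\opb{q_2}F_2)$, combined with the second line of Theorem~\ref{th:opboim}~(i), which introduces the antipodal twist.

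Next, to prove the inverse-image bound, I would factor $f\cl M\to N$ as $M\xto{i_f}M\times N\xto{q_2}N$, where $i_f(x)=(x,f(x))$ is the graph embedding. For the smooth projection $q_2$, it is non-characteristic for every conic subset of $T^*N$, so Theorem~\ref{th:opboim}~(iii) gives $\SSi(\opb{q_2}G)=\SSi(\epb{q_2}G)=T^*_MM\times\SSi(G)$, which is easily seen to agree with $q_2^\sharp(\SSi(G))$. Moreover $f^\sharp = i_f^\sharp\circ q_2^\sharp$, as can be checked from the description \eqref{def:fdiese} by splitting the sequences into $M$- and $N$-components. So it remains to prove the estimate for a closed embedding $i\cl L\hookrightarrow X$ (where I set $X=M\times N$ and $L=\Gamma_f$).

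For the closed embedding case, I would argue directly from the definition of $\SSi$: take $p=(x_0;\xi_0)\in T^*L$ with $p\notin i^\sharp(\SSi(G))$, and construct a neighborhood $U$ of $p$ such that the sections $(\rsect_{\{\phi\geq0\}}\opb{i}G)_{x_0}$ vanish for every $C^1$-function $\phi$ on $L$ near $x_0$ with $d\phi(x_0)\in U$, $\phi(x_0)=0$. The strategy is to extend $\phi$ to a function $\tilde\phi$ on $X$ defined in a neighborhood of $x_0$, apply $\rsect_{\{\tilde\phi\geq0\}}$ to $G$ on $X$, and then restrict to $L$. Using a test function of the form $\tilde\phi(z)=\phi(\pi_L(z))+c\,\dist(z,L)^2$ (in local coordinates where $L$ is linear), one checks that $d\tilde\phi$ stays out of $\SSi(G)$ in a conic neighborhood of $\xi_0$: precisely the definition \eqref{def:fdiese} of $f^\sharp$ is designed so that the escape from $\SSi(G)$ holds along any sequence of base points approaching $L$ while the covector components blow up, which is exactly the behaviour of $d\tilde\phi$. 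The statement for $\epb{i}G$ then follows by duality, since $\RD_X$ exchanges $\opb{i}$ and $\epb{i}$ and preserves microsupport.

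The main obstacle is the last step: constructing the extension $\tilde\phi$ and verifying the non-characteristic condition for $G$ on $X$ along sequences with the rate $|y_n-f(x_n)|\cdot|\eta_n|\to 0$ built into \eqref{def:fdiese}. This is where the Whitney-cone formulation of $f^\sharp$ becomes essential, and where the microlocal cut-off lemma of \cite{KS90} is typically invoked to pass from the definition with test functions to the desired conic inclusion.
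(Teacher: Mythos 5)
The paper does not prove this statement: it is quoted from \cite[Cor.~6.4.4,~6.4.5]{KS90}, so the benchmark is the proof given there. Your reduction steps are essentially the ones used in loc.\ cit.: the bounds for $\ltens$ and $\rhom$ do follow from the inverse-image bound applied to the diagonal, via Theorem~\ref{th:opboim}~(i) and the identification $\Delta_M^\sharp(A\times B)=A\hatp B$, and the inverse-image bound does reduce, through the graph factorization, to the case of a closed embedding $i\cl L\into X$. (Two smaller slips along the way: the correct general identity is $\rhom(F_1,F_2)\simeq\epb{\Delta_M}\rhom(\opb{q_2}F_1,\epb{q_1}F_2)$, not $\epb{\Delta_M}\rhom(\opb{q_1}F_1,\opb{q_2}F_2)$ --- though the discrepancy is a twist by a locally constant object and does not affect the microsupport estimate; and the $\epb{i}$ case cannot be deduced ``by duality'' for arbitrary $G\in\Derb(\cor_N)$, since $\SSi(\RD_XG)=\SSi(G)^a$ requires constructibility.)

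The genuine gap is the embedding case, which is the entire analytic content of the theorem, and the argument you sketch for it would fail for two concrete reasons. First, even if you arrange that $d\tw\phi(z)\notin\SSi(G)$ for $z$ near $x_0$ in $X$, what you obtain is the vanishing of $(\rsect_{\{\tw\phi\geq0\}}G)_{x_0}$ computed on $X$; but the comparison morphism $\opb{i}\rsect_{\{\tw\phi\geq0\}}(G)\to\rsect_{\{\phi\geq0\}}(\opb{i}G)$ is an isomorphism only under a non-characteristicity hypothesis, which is exactly what is not assumed here, so this does not yield the vanishing of $(\rsect_{\{\phi\geq0\}}\opb{i}G)_{x_0}$. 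One must instead write sections of $\opb{i}G$ over subsets of $L$ as a limit over shrinking neighborhoods in $X$ and control the stabilization of that limit. Second, the extension $\tw\phi=\phi\circ\pi_L+c\,\dist(\cdot,L)^2$ has conormal component of $d\tw\phi(z)$ of size $O(\dist(z,L))$, so it only tests covectors whose $T^*_LX$-component stays bounded near $L$; by~\eqref{def:fdiese}, $i^{\sharp}(\SSi(G))$ records limits of covectors $(y_n;\eta_n)\in\SSi(G)$ whose conormal components may blow up, subject only to $\vert y_n-i(x_n)\vert\cdot\vert\eta_n\vert\to0$. Capturing those directions is precisely what forces the proof in \cite{KS90} to go through the normal deformation of $X$ along $L$ and the specialization/microlocalization functors (equivalently, the refined cut-off and the non-proper direct image theorem), rather than a single family of test functions. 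As written, your final step restates the difficulty rather than resolving it.
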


\subsubsection*{Non proper direct images}
We shall also need a direct image theorem in a non proper case.

Consider a  {\em constant} linear map
$u$ of {\em trivial} vector bundles over $M$,
that is, we assume that $E_i=M\times V_i$ ($i=1,2$) and $u\cl V_1\to V_2$
is a linear map.
The map $u$ defines  the maps described by the diagram
\eqn
\xymatrix{
&T^*M\times V_1\times V^*_2\ar[ld]_-{u_d}\ar[rd]^-{u_\pi}&\\
T^*M\times V_1\times V^*_1\ar[rd]_-{v_\pi}
               &&T^*M\times V_2\times V^*_2.\ar[ld]^-{v_d}\\
&T^*M\times V_2\times V^*_1&
}\eneqn
Note that for a subset $A$ of $T^*E_1$ we have
\eq\label{eq:upiudA}
&& u_\pi(\opb{u_d}(A))= \opb{v_d}({v_\pi(A)}).
\eneq
\begin{notation}\label{not:usharp}
Let $u\cl E_1\to E_2$ be a constant linear map of trivial vector bundles
over $M$ and let $A\subset T^*E_1$ be a closed subset. We set
\eq\label{not:udag2}
&&u_{\sharp}(A)=\opb{v_d}(\ol{v_\pi(A)}).
\eneq
\end{notation}

In Lemmas~\ref{lem:indprolim} and~\ref{lem:SSreimroim} below we use the
notations $\bigoplus_nG_n$ and $\prod_nG_n$ for a family $\{G_n\}_{n\in \N}$ in
$\Derb(\cor_M)$. We define it as follows.  Let $p\cl M\times\N \to M$ be
the projection. Then we have a unique $G\in \Derb(\cor_{M\times\N})$ such that
$G|_{M\times\{n\}} \simeq G_n$, for all $n$, and we set
$\bigoplus_nG_n \eqdot \reim{p}G$ and $\prod_nG_n \eqdot \roim{p}G$.

\begin{lemma}\label{lem:indprolim}
Let $M$ be a manifolds and let $\{U_n\}_{n\in \N}$ be an increasing sequence of
open subsets of $M$ such that $M=\bigcup_n U_n$. 
Then, for any $F\in \Derb(\cor_M)$, we have the distinguished triangles
$$
 \bigoplus_n F_{U_n}  \to[\id - s_1] \bigoplus_n F_{U_n} \to F  \to[+1], 
\;\;
F \to \prod_n \rsect_{U_n}(F) \to[\id - s_2] \prod_n \rsect_{U_n}(F) \to[+1],
$$
where $s_1$ is the sum of the natural morphisms $F_{U_n} \to F_{U_{n+1}}$
and $s_2$ the product of the natural morphisms
$\rsect_{U_{n+1}}(F) \to \rsect_{U_n}(F)$ for $n\geq 0$ and the zero morphism
for $n=-1$.
\end{lemma}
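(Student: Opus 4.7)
The plan is to deduce both distinguished triangles from a single short exact sequence of sheaves on $M$, namely
\begin{equation*}
0 \to \textstyle\bigoplus_n \cor_{U_n} \to[\id - s] \textstyle\bigoplus_n \cor_{U_n} \to \cor_M \to 0, \qquad (\ast)
\end{equation*}
where $s$ sends the $n$-th summand to the $(n+1)$-st via the natural inclusion $\cor_{U_n} \hookrightarrow \cor_{U_{n+1}}$, and the last map is the sum of the inclusions $\cor_{U_n} \hookrightarrow \cor_M$. This is the standard telescope resolution presenting $\cor_M = \indlim_n \cor_{U_n}$ as a homotopy colimit.

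To verify $(\ast)$ as an honest short exact sequence in the abelian category, one checks stalks. For $x \in M$, let $N(x)$ be the smallest integer with $x \in U_{N(x)}$. Then $(\bigoplus_n \cor_{U_n})_x \simeq \bigoplus_{n \geq N(x)} \cor$, on which $\id - s$ acts as $(a_N, a_{N+1}, a_{N+2}, \ldots) \mapsto (a_N, a_{N+1} - a_N, a_{N+2} - a_{N+1}, \ldots)$. An elementary calculation (using that sequences have finite support) shows this map has vanishing kernel and cokernel canonically isomorphic to $\cor \simeq (\cor_M)_x$ via the augmentation $(a_n)_n \mapsto \sum_n a_n$.

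The first triangle is obtained by applying $F \ltens (-)$ to $(\ast)$: each $\cor_{U_n}$ is flat (being $j_{n,!}\cor$ for the open embedding $j_n \cl U_n \hookrightarrow M$), hence so is $\bigoplus_n \cor_{U_n}$, so $F \ltens (-)$ sends the short exact sequence to a distinguished triangle; the identifications $F \ltens \cor_{U_n} \simeq F_{U_n}$ and $F \ltens \cor_M \simeq F$ give the claimed triangle. For the second triangle, apply $\rhom(-, F)$ to $(\ast)$, obtaining
\begin{equation*}
\rhom(\cor_M, F) \to \rhom(\textstyle\bigoplus_n \cor_{U_n}, F) \to \rhom(\textstyle\bigoplus_n \cor_{U_n}, F) \to[+1].
\end{equation*}
Now use $\rhom(\cor_M, F) \simeq F$, $\rhom(\cor_{U_n}, F) \simeq \rsect_{U_n}(F)$, and the identification $\rhom(\bigoplus_n \cor_{U_n}, F) \simeq \prod_n \rsect_{U_n}(F)$, which follows from the adjunction $(\reim{p}, \epb{p})$ for $p \cl M \times \N \to M$ (with $\epb{p} \simeq \opb{p}$ since $\N$ is discrete). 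Contravariance of $\rhom$ in the first variable reverses the arrow of $s$, producing $s_2$.

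The main obstacle is minor: verifying $(\ast)$ at the stalk level and tracking the signs and index conventions when $\rhom$ exchanges the $\bigoplus$ (i.e., $\reim{p}$) with the $\prod$ (i.e., $\roim{p}$).
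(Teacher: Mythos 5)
Your proof is correct, but it takes a genuinely different route from the paper's. The paper argues directly at the level of $F$: it replaces $F$ by a complex of flabby sheaves and exhibits the two distinguished triangles as arising, degree by degree, from two short exact sequences of sheaves — the first checked on stalks (valid for any sheaf), the second checked on sections over open sets (where flabbiness is needed so that $\id-s_2$ is surjective and so that $\prod_n$ and $\sect_{U_n}$ compute their derived versions termwise). You instead establish a single universal short exact sequence $(\ast)$ for the constant sheaf $\cor_M$, verify it once on stalks, and then deduce both triangles formally: $F\ltens(-)$ applied to $(\ast)$ (exact by flatness of $\bigoplus_n\cor_{U_n}$) gives the first, and $\rhom(-,F)$ applied to $(\ast)$ gives the second, with the identifications $F\ltens\cor_{U_n}\simeq F_{U_n}$, $\rhom(\cor_{U_n},F)\simeq\rsect_{U_n}(F)$, and the adjunction $\rhom(\reim{p}G,F)\simeq\roim{p}\rhom(G,\epb{p}F)$ (with $\epb{p}\simeq\opb{p}$ since $p$ is a local homeomorphism) matching the paper's $\reim p$/$\roim p$ conventions for $\bigoplus$ and $\prod$. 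The index check — that the transpose of the shift $s$ (up by one) is the shift $s_2$ (down by one) — is exactly right. Your approach buys uniformity and avoids the flabby replacement entirely; the paper's approach is more self-contained in that it avoids invoking the local adjunction formula, at the cost of a second exactness verification and the flabbiness hypothesis.
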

\begin{proof}
These triangles arise from similar exact sequences of sheaves when
$F$ is a flabby sheaf. The exactness can be checked easily on the stalks
in the first case and on sections over any open subset in the second case.
\end{proof}

\begin{lemma}\label{lem:SSreimroim}
Let $f\cl M\to N$ be a morphism of manifolds and let $\{U_n\}_{n\in \N}$ be an
increasing sequence of open subsets of $M$ such that $M=\bigcup_n U_n$.
Then, for any $F\in \Derb(\cor_M)$, we have
$$
\SSi(\reim{f} F) \subset \ol{\bigcup_n \SSi(\reim{f} (F_{U_n}))} ,
\qquad
\SSi(\roim{f} F) \subset \ol{\bigcup_n \SSi(\roim{f} \rsect_{U_n}(F))} .
$$
\end{lemma}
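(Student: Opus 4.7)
The plan is to apply $\reim{f}$ (resp.\ $\roim{f}$) to the distinguished triangles from Lemma~\ref{lem:indprolim}, then to combine the triangular inequality for microsupport with a general bound on the microsupport of direct sums and products. Since $\reim{f}$ is left adjoint to $\epb{f}$, it commutes with direct sums; concretely, if $p_M\colon M\times\N \to M$ and $p_N\colon N\times\N\to N$ are the projections then $\reim{f}\reim{p_M} \simeq \reim{p_N}\reim{(f\times\id_\N)}$, and the restriction of $\reim{(f\times\id_\N)}G$ to $N\times\{n\}$ is $\reim{f}(F_{U_n})$. Applying $\reim{f}$ to the first triangle of Lemma~\ref{lem:indprolim} thus yields
$$
\bigoplus_n \reim{f}(F_{U_n}) \to \bigoplus_n \reim{f}(F_{U_n}) \to \reim{f}F \to[+1],
$$
and the triangular inequality gives $\SSi(\reim{f}F) \subset \SSi(\bigoplus_n \reim{f}(F_{U_n}))$. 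The $\roim{f}$ case is dual, using that $\roim{f}$ is right adjoint to $\opb{f}$ and so commutes with products.

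This reduces everything to the sub-lemma that $\SSi(\bigoplus_n G_n) \subset \overline{\bigcup_n \SSi(G_n)}$ and $\SSi(\prod_n G_n) \subset \overline{\bigcup_n \SSi(G_n)}$ for any family $\{G_n\}_n$ in $\Derb(\cor_N)$. To prove this, I would fix a point $p_0=(y_0;\eta_0)$ outside the right-hand closure and choose an open conic neighborhood $W$ of $p_0$ disjoint from every $\SSi(G_n)$. Then for any $C^1$ function $\phi$ defined near $y_0$ with $\phi(y_0)=0$ and $d\phi(y_0)\in W$, the stalks $(\rsect_{\{\phi\geq 0\}}G_n)_{y_0}$ vanish simultaneously for every $n$. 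Writing $\bigoplus_n G_n \simeq \reim{p}G$ and $\prod_n G_n \simeq \roim{p}G$ for $p\colon N\times\N\to N$ and $G$ on $N\times\N$ with $G|_{N\times\{n\}}\simeq G_n$, I would then use base change along the Cartesian square formed by $\{\phi\geq 0\}\hookrightarrow N$ and $p$ to express $(\rsect_{\{\phi\geq 0\}}(\bigoplus_n G_n))_{y_0}$ (resp.\ $(\rsect_{\{\phi\geq 0\}}(\prod_n G_n))_{y_0}$) as a derived colimit (resp.\ limit) over $n$ of the vanishing quantities $(\rsect_{\{\phi\geq 0\}}G_n)_{y_0}$, so the result vanishes and $p_0\notin\SSi(\bigoplus_n G_n)$ (resp.\ $p_0\notin\SSi(\prod_n G_n)$).

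The hardest step will be the product case of this sub-lemma, since stalks do not commute with infinite products in general. The key remark is that the single choice of $W$ gives vanishing of $\rsect_{\{\phi\geq 0\}}G_n$ not just at $y_0$ but uniformly at every point near $y_0$ (and uniformly as $\phi$ varies with $d\phi$ in $W$), which is precisely what is needed to interchange the filtered colimit over infinitesimal neighborhoods of $y_0$ with the infinite product.
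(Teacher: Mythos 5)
Your proposal is correct and follows essentially the same route as the paper: reduce via the distinguished triangles of Lemma~\ref{lem:indprolim} and the commutation of $\reim{f}$ with $\bigoplus$ and $\roim{f}$ with $\prod$, then invoke the bound $\SSi(\bigoplus_nG_n)\cup\SSi(\prod_nG_n)\subset\ol{\bigcup_n\SSi(G_n)}$. The only difference is that you also sketch a proof of that last bound (which the paper simply cites as an exercise from \cite{KS90}), and your sketch correctly identifies the one delicate point, namely the uniformity needed to handle the infinite product.
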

\begin{proof}
We can check, similarly as in~\cite[Exe.~V.7]{KS90}, that
for any family $\{G_n\}_{n\in \N}$ in $\Derb(\cor_N)$ we have
$\SSi(\bigoplus_nG_n) \cup \SSi(\prod_nG_n) \subset \ol{\bigcup_n \SSi(G_n)}$.
Then the result follows from Lemma~\ref{lem:indprolim} and the fact that
$\reim{f}$ commutes with $\oplus$ and $\roim{f}$ with $\prod$.
\end{proof}

The following result is due to Tamarkin~\cite[Lem.~3.3]{Ta} but our proof is
completely different.
\begin{theorem}\label{th:nonpropdirim}
Let $u\cl E_1\to E_2$ be a constant linear map of trivial vector bundles
over $M$ and let
$F\in\Derb(\cor_{E_1})$. Then $\SSi(\reim{u}F)\subset u_{\sharp}(\SSi(F))$.
The same estimate holds with $\reim{u}F$ replaced with $\roim{u}F$.
\end{theorem}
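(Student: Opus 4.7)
The plan is to reduce to the case where $u$ is proper, by cutting off the fiber with an exhaustion. Choose an increasing sequence $\{V_{1,n}\}_{n\in\N}$ of open subsets of $V_1$ with $\overline{V_{1,n}}$ compact and $V_1=\bigcup_nV_{1,n}$, and set $U_n=M\times V_{1,n}$. Since $u$ acts as the identity on the $M$-factor and $\overline{V_{1,n}}$ is compact, the restriction $u|_{\overline{U_n}}$ is proper, so $u$ is proper on $\Supp(F_{U_n})\subset\overline{U_n}$. Theorem~\ref{th:opboim}(ii) would then give
$$
\SSi(\reim{u}(F_{U_n}))\subset u_\pi\opb{u_d}(\SSi(F_{U_n}))=\opb{v_d}\bigl(v_\pi(\SSi(F_{U_n}))\bigr),
$$
using~\eqref{eq:upiudA}, and combining with Lemma~\ref{lem:SSreimroim} one obtains
$$
\SSi(\reim{u}F)\subset\overline{\bigcup_n\opb{v_d}\bigl(v_\pi(\SSi(F_{U_n}))\bigr)}.
$$

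The crux of the argument is then to show that this closure is contained in $u_\sharp(\SSi(F))=\opb{v_d}(\overline{v_\pi(\SSi(F))})$. The idea is to analyze $\SSi(F_{U_n})$ via the bound $\SSi(F_{U_n})\subset\SSi(F)\hatp\SSi(\cor_{U_n})$ of Theorem~\ref{th:opboim2}, using that $\SSi(\cor_{U_n})$ is the zero section on the interior of $U_n$ and consists of outward $V_1$-conormals over $\partial U_n$. Given any point of the outer closure, I would extract a sequence of points in $\opb{v_d}(v_\pi(\SSi(F_{U_{n_k}})))$, trace it through the defining $\hatp$-witnesses, and use a diagonal procedure across the exhaustion: as $n_k\to\infty$, any bounded point in $E_1$ eventually lies in the interior of $U_{n_k}$, so the witnesses can be arranged with zero $\SSi(\cor_{U_{n_k}})$-part, producing a sequence in $v_\pi(\SSi(F))$ converging to the target.

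The main obstacle is handling the case where the $\hatp$-witnesses $(\xi_k,\eta_k)$ cancel at infinity---that is, $|\xi_k|,|\eta_k|\to\infty$ while $\xi_k+\eta_k$ stays bounded---so that the naive limit does not land in $v_\pi(\SSi(F))$ but only in a rescaled ``direction at infinity''. This must be controlled by using the conic structure of $\SSi(F)$ to rescale $\xi_k$ to a bounded sequence, together with the observation that the outward-conormal part of $\eta_k$ has zero $T^*M$-component, so that the $T^*M$-part of $\xi_k$ is already bounded and converges to the desired $m^*$. This is the delicate geometric estimate on which the argument hinges. The estimate for $\roim{u}F$ is obtained by the symmetric procedure, using $\rsect_{U_n}(F)$ in place of $F_{U_n}$ and the second triangle of Lemma~\ref{lem:SSreimroim}.
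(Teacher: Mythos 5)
Your first step---exhausting $E_1$ and reducing to the proper case via Lemma~\ref{lem:SSreimroim}---is also the paper's first step, but your choice of exhaustion by fiberwise relatively compact sets $U_n=M\times V_{1,n}$ is fatal, and the ``delicate geometric estimate'' you defer to the end cannot be carried out. The obstruction is not cancellation at infinity: cancellation at \emph{finite} covectors already falsifies the crux inclusion. Take $M=\R$, $V_1=\R$, $u$ the projection, $F=\cor_Z$ with $Z=\{(x,v);\,v\leq x\}$ and $U_n=\R\times\,]-n,n[$. The conormals of $Z$ are $\R_{\geq0}\cdot(1,-1)$, so $u_{\sharp}(\SSi(F))$ is the zero-section (no sequence in $\SSi(F)$ has $V^*$-component tending to $0$ while the $W^*$-component stays away from $0$), and indeed $\reim{u}F=0$. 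But $\reim{u}(F_{U_n})\simeq\cor_{[n,+\infty[}[-1]$, so $\SSi(\reim{u}(F_{U_n}))$ contains the whole ray $\{(n;\lambda);\lambda\geq0\}$: at the corner $(n,n)$ the conormal $(1,-1)$ of $Z$ and the outward conormal $(0,1)$ of the vertical wall $\{v=n\}$ sum to $(1,0)$, a covector with vanishing fiber component whose witnesses in $\SSi(F)$ have fiber component bounded away from $0$. Hence $\ol{\bigcup_n\SSi(\reim{u}(F_{U_n}))}\not\subset u_{\sharp}(\SSi(F))$, and no diagonal procedure repairs this: the spurious covectors sit over the moving boundary $\partial U_n$ and belong to the union for each fixed $n$; that every point of $E_1$ is eventually interior to $U_{n_k}$ does not remove the contributions of the earlier terms.

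The paper avoids this by arguing covector by covector and by cutting off with \emph{cones} rather than fiber-compact sets. After reducing to a projection $M\times V\to M$ with $M$ open in $W$, one fixes $(x_0;\xi_0)\notin u_{\sharp}(\SSi(F))$ and chooses an open cone $C\subset W^*\times V^*$ containing $(\xi_0,0)$ with $\SSi(F)\cap((M\times V)\times C)=\emptyset$; the exhaustion is then by translated cones $\gamma_n=z_n-\gamma$ and $\gamma'_n=z'_n+\gamma$ with $\gamma^\circ\subset C$ and $\ol{\gamma}\cap(\{0\}\times V)=\{(0,0)\}$. The latter condition keeps $u$ proper on $\ol{\gamma_n}$ (so Theorem~\ref{th:opboim}~(ii) applies), while the former forces $\SSi(\cor_{\gamma_n})$ and $\SSi(\cor_{\gamma'_n})$ into $\pm C$, a region disjoint from $\SSi(F)$, so that Corollary~\ref{cor:opboim} (a genuinely non-characteristic tensor/$\rsect$ estimate, not the $\hatp$ bound) shows the cutoff creates no microsupport in the direction $C$ one is testing. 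That is exactly the cancellation your vertical walls cannot avoid: to salvage your strategy you would have to build the walls of the exhausting sets from hypersurfaces whose conormals stay in a cone avoiding $\SSi(F)$ near the covector under scrutiny, which forces the localized, conic setup of the paper.
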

\begin{proof}
(i) By decomposing $u$ by its graph, one is reduced to prove the result for an
immersion and for a projection. Since the case of an immersion is obvious, we
restrict ourselves to the case where $E=M\times V$ and $u\cl E\to M$ is the
projection. Moreover the result is local on $M$ and we may assume that $M$ is 
an open subset in a vector space $W$.

\medskip\noindent
(ii) We consider $(x_0;\xi_0) \in T^*M\simeq M\times W^*$ such that 
$(x_0;\xi_0) \not\in u_{\sharp}(\SSi(F))$. We will prove that
$(x_0;\xi_0) \not\in \SSi(\reim{u}F)\cup \SSi(\roim{u}F)$.
If $\xi_0=0$, then $F|_{U\times V}\simeq 0$ for some neighborhood $U$ of $x_0$
and the result follows easily. Hence we assume that $\xi_0\not=0$.
Up to shrinking $M$ we may find an open cone $C\subset W^*\times V^*$ such that
$(\xi_0,0) \in C$ and $\SSi(F) \cap ((M\times V)\times C)=\emptyset$.

\medskip\noindent
(iii) We choose an open convex cone $\gamma\subset W\times V$ such that
$\ol{\gamma} \cap (\{0\}\times V) = \{(0,0)\}$ and $\gamma^\circ \subset C$.
We also choose two sequences of points $\{z_n\}_{n\in\N}$,
resp. $\{z'_n\}_{n\in\N}$,  of $W\times V$ such that $W\times V$ is the
increasing union of the cones $\gamma_n = z_n-\gamma$, resp.
$\gamma'_n = z'_n+\gamma$.
By Lemma~\ref{lem:SSreimroim} it is enough to show
$$
(\SSi(\roim{u}\rsect_{\gamma_n}F)  \cup \SSi(\reim{u}(F_{\gamma'_n})))
\cap (M\times (C\cap (W^*\times\{0\}))  =\emptyset.
$$

\medskip\noindent
(iv) By Lemma~\ref{le:sscone} below
$\SSi(\cor_{\ol{\gamma_n}}) \subset (W\times V) \times (-C)$.
Using $\RD'_M(\cor_{\ol{\gamma_n}}) \simeq \cor_{\gamma_n}$ we deduce
$\SSi(\cor_{\gamma_n})\subset (W\times V) \times C$.
Similarly $\SSi(\cor_{\gamma'_n})\subset (W\times V) \times (-C)$.
Since $\SSi(F) \cap ((M\times V)\times C)=\emptyset$,
Corollary~\ref{cor:opboim} gives
$$
(\SSi(\rsect_{\gamma_n}F)  \cup \SSi((F_{\gamma'_n})))
\cap ((M\times V)\times C)  =\emptyset.
$$
Since $\ol{\gamma} \cap (\{0\}\times V) = \{(0,0)\}$ the map 
$u\cl M\times V \to M$ is proper on all $\ol{\gamma_n}$ and $\ol{\gamma'_n}$
and the result follows from Theorem~\ref{th:opboim}~(ii).
\end{proof}

For a trivial vector bundle $E=M\times V$ we denote by
\eq\label{eq:projfibre}
\piw_E\cl T^*E \to T^*M\times V^*,
\eneq
or $\piw$ if there is no risk of confusion, the natural projection.
We say that a subset of $T^*M \times V^*$ is a cone if it is stable by
the multiplicative action of $\R^+$ given by
\eq\label{eq:defcone}
\lambda\cdot (x;\xi,v) = (x;\lambda\xi, \lambda v).
\eneq
We will be mainly concerned with the case where $F\in\Derb(\cor_E)$
has a microsupport bounded by $\opb{\piw_E}(A)$ for some closed cone
$A\subset T^*M\times V^*$.

Let $u\cl E_1=M\times V_1\to E_2=M\times V_2$ be a constant linear map of 
trivial vector bundles over $M$ and denote by 
\eq\label{eq:utildd}
&&\tw u_d\cl T^*M\times V^*_2\to T^*M\times V^*_1
\eneq
the map associated with $u$. 

\begin{corollary}\label{cor:easyestimatproperm}
Let $u\cl E_1\to E_2$ be a constant linear map of trivial vector bundles
over $M$ and let $F\in\Derb(\cor_{E_1})$. Assume that 
$\SSi(F)\subset \opb{\piw_{E_1}}(A_1)$ for a closed cone $A_1\subset T^*M\times V^*_1$.
Then $\SSi(\reim{u}F)\subset \opb{\piw_{E_2}}\opb{\tw u_d}(A_1)$.
The same estimate holds with $\reim{u}F$ replaced with $\roim{u}F$.
\end{corollary}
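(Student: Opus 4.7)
The plan is to deduce this corollary directly from Theorem~\ref{th:nonpropdirim}, which says $\SSi(\reim{u}F)\subset u_\sharp(\SSi(F))$, by a routine tracing through the definitions of the maps $u_\pi, u_d, v_\pi, v_d$. The point is that these maps have a very simple structure when $u$ is a constant linear map: writing $u=\id_M\times u_V$ with $u_V\cl V_1\to V_2$ linear, the map $v_\pi\cl T^*M\times V_1\times V_1^*\to T^*M\times V_2\times V_1^*$ acts as $u_V$ on the $V_1$ factor and as the identity on $T^*M\times V_1^*$, while $v_d\cl T^*M\times V_2\times V_2^*\to T^*M\times V_2\times V_1^*$ acts as the identity on $T^*M\times V_2$ and as ${}^tu_V$ on $V_2^*$.

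Given these formulas, I first compute $v_\pi(\opb{\piw_{E_1}}(A_1))$. A point $(x;\xi,v_1,\eta_1)\in\opb{\piw_{E_1}}(A_1)$ satisfies $(x;\xi,\eta_1)\in A_1$, so its image $(x;\xi,u_V(v_1),\eta_1)$ lies in the set $S\eqdot\{(x;\xi,v_2,\eta_1)\mid (x;\xi,\eta_1)\in A_1\}$. Since $A_1$ is closed, $S$ is closed, hence $\ol{v_\pi(\opb{\piw_{E_1}}(A_1))}\subset S$. Pulling back along $v_d$, a point $(x;\xi,v_2,\eta_2)\in\opb{v_d}(S)$ satisfies $(x;\xi,{}^tu_V(\eta_2))\in A_1$, i.e.\ $(x;\xi,\eta_2)\in\opb{\tw u_d}(A_1)$, which is precisely the condition defining $\opb{\piw_{E_2}}\opb{\tw u_d}(A_1)$. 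Assembling these inclusions, $u_\sharp(\opb{\piw_{E_1}}(A_1))\subset\opb{\piw_{E_2}}\opb{\tw u_d}(A_1)$, and combined with $\SSi(F)\subset\opb{\piw_{E_1}}(A_1)$ and Theorem~\ref{th:nonpropdirim} this yields the statement for $\reim{u}F$; the same argument applies verbatim to $\roim{u}F$ thanks to the analogous estimate in Theorem~\ref{th:nonpropdirim}.

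The only mildly delicate point is the closure operation in the definition of $u_\sharp$: one must check that passing to $\ol{v_\pi(\cdots)}$ does not introduce spurious points outside the target set. This is taken care of by the hypothesis that $A_1$ itself is closed, which makes the ambient set $S$ above closed, so the closure is harmless. No other subtleties arise, since the relevant maps are linear on the fibers and the estimate is purely at the level of subsets of cotangent bundles.
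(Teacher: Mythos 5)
Your proof is correct and follows essentially the same route as the paper: both apply Theorem~\ref{th:nonpropdirim} and then compute $u_{\sharp}(\opb{\piw_{E_1}}(A_1))$ explicitly, observing that $v_\pi(\opb{\piw_{E_1}}(A_1))$ sits inside a closed set of the form $A_1\times V_2$ (so the closure in the definition of $u_\sharp$ is harmless) and that $\opb{v_d}$ of it is $\opb{\piw_{E_2}}\opb{\tw u_d}(A_1)$. Your attention to the closure step is exactly the point the paper's one-line computation relies on.
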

\begin{proof}
We have $v_\pi(\opb{\piw_{E_1}}(A_1)) = A_1 \times V_2$ and this set is closed. We thus  have 
\eqn
u_{\sharp}(\opb{\piw_{E_1}}(A_1)) &=&\opb{v_d}({v_\pi(\opb{\piw_{E_1}}(A_1))})
=u_\pi(\opb{u_d}(A_1\times V_1))\\
&=&\opb{\tw u_d}(A_1)\times V_2=\opb{\piw_{E_2}}\opb{\tw u_d}(A_1).
\eneqn
\end{proof}

\subsubsection*{Localization}
Let $\sht$ be a triangulated category, $\shn$ a null system, that is, a
full triangulated
subcategory with the property that if one has an isomorphism 
$F\simeq G$ in $\sht$ with $F\in\shn$, then $G\in\shn$.
The localization $\sht/\shn$ is a well defined triangulated category
(we skip the problem of universes). Its objects are those of $\sht$
and a morphism $u\cl F_1\to F_2$ in $\sht$ becomes an isomorphism in 
$\sht/\shn$ if, after embedding  this morphism in a distinguished
triangle $F_1\to F_2\to F_3\to[+1]$, one has $F_3\in\shn$. 

Recall that the left orthogonal $\shn^{\perp,l}$ of
$\shn$ is the full triangulated subcategory of $\sht$ defined by:
\eqn
&&\shn^{\perp,l}=\{F\in\sht;\Hom[\sht](F,G)\simeq0\text{ for all }G\in\shn\}.
\eneqn
By classical results (see {\em e.g.,}~\cite[Exe.~10.15]{KS05}), if 
the embedding $\shn^{\perp,l}\hookrightarrow\sht$ admits a left adjoint, or 
equivalently, if for any $F\in\sht$, there exists a
distinguished triangle $F'\to F\to F''\to[+1]$ with 
$F'\in\shn^{\perp,l}$ and $F''\in\shn$, then there is an equivalence 
$\shn^{\perp,l}\simeq\sht/\shn$.

Of course, there are similar results with the right orthogonal $\shn^{\perp,r}$.

Now let $U$ be a subset of $T^*M$ and set $Z=T^*M\setminus U$. The full
subcategory $\Derb_Z(\cor_M)$ of $\Derb(\cor_M)$ consisting of sheaves $F$
such that $\SSi(F)\subset Z$ is a null system. One sets
\eqn
&& \Derb(\cor_M;U)\eqdot \Derb(\cor_M)/\Derb_Z(\cor_M),
\eneqn
the localization of $\Derb(\cor_M)$ by $\Derb_Z(\cor_M)$. Hence, the
objects of $\Derb(\cor_M;U)$ are those of $\Derb(\cor_M)$ but a
morphism $u\cl F_1\to F_2$ in $\Derb(\cor_M)$ becomes an isomorphism
in $\Derb(\cor_M;U)$ if, after embedding  this morphism in a distinguished
triangle $F_1\to F_2\to F_3\to[+1]$, one has $\SSi(F_3)\cap U=\emptyset$. 

For  a closed subset $A$ of $U$, $\Derb_A(\cor_M;U)$ denotes  
the full triangulated subcategory of $\Derb(\cor_M;U)$ consisting of
objects whose microsupports have an intersection with $U$ contained in $A$. 

\subsubsection*{Quantized symplectic isomorphisms  (\cite[\S 7.2]{KS90})}
Consider two manifolds $M$ and $N$, two conic open subsets $U\subset T^*M$
and $V\subset T^*N$ and a homogeneous symplectic isomorphism $\chi$:
\eq\label{eq:contact1}
&& T^*N\supset V\isoto[\chi] U\subset T^*M.
\eneq
Denote by $V^a$ the image of $V$ by the antipodal map $a_N$ on $T^*N$ and by $\Lambda$
the image of the graph of $\phi$ by $\id_U \times a_N$. Hence
$\Lambda$ is a conic Lagrangian submanifold of $U\times V^a$.
A quantized contact transformation (a QCT, for short) above $\chi$ is a kernel
$K\in\Derb(\cor_{M\times N})$ such that 
$\SSi(K)\cap(U\times V^a)\subset\Lambda$ and satisfying some technical
properties that we do not recall here, so that the kernel $K$ induces
an equivalence of categories 
\eq\label{eq:QCT1}
&&K\conv\scbul\cl\Derb(\cor_N;V)\isoto\Derb(\cor_M;U). 
\eneq
Given $\chi$ and $q\in V$,
$p=\chi(q)\in U$, there exists such a QCT after replacing $U$ and $V$
by sufficiently small neighborhoods of $p$ and $q$.

\subsubsection*{Simple sheaves (\cite[\S 7.5]{KS90})}
Let $\Lambda\subset\dTM$ be a locally closed conic Lagrangian
submanifold and let $p\in\Lambda$.
Simple sheaves along $\Lambda$ at $p$ 
are defined in~\cite[Def.~7.5.4]{KS90}. 

When $\Lambda$ is the conormal bundle to a submanifold
$N\subset M$, that is, when the projection $\pi_M\vert_\Lambda\cl\Lambda\to M$ has
constant rank, then an object 
$F\in\Derb(\cor_M)$ is simple along $\Lambda$ at
$p$ if $F\simeq\cor_N\,[d]$ in $\Derb(\cor_M;p)$ for some shift $d\in\Z$.

If $\SSi(F)$ is contained in $\Lambda$ on a neighborhood of $\Lambda$,
$\Lambda$ is connected and $F$ is simple at some point of $\Lambda$,
then $F$ is simple at every point of $\Lambda$.

\subsubsection*{The functor $\muhom$ (\cite[\S 4.4, \S 7.2]{KS90})}
The functor of microlocalization along a submanifold has been introduced by
Mikio Sato in the 70's and has been at the origin of what is now called
``microlocal analysis''.  A variant of this functor, the bifunctor
\eq
&&\muhom\cl\Derb(\cor_M)^\rop\times\Derb(\cor_M)\to\Derb(\cor_{T^*M})
\label{eq:muhom}
\eneq
has been constructed in~\cite{KS90}.
Let us only recall the properties of this functor that we shall use.
For $F,G\in \Derb(\cor_M)$, with $F$ cohomologically constructible,
we have
\eqn
\roim{\pi_M} \muhom(F,G) &\simeq& \rhom(F,G) , \\
\reim{\pi_M} \muhom(F,G) &\simeq& \RD'_M(F) \ltens G 
\eneqn
and we deduce the distinguished triangle
\eq
\label{eq:dtmuhom}
&& \RD'_M(F) \ltens G \to \rhom(F,G)
\to \roim{\dot\pi_M{}} ( \muhom(F,G) |_{\dTM}) \to[+1].
\eneq
Let $\Lambda\subset\dTM$ be a locally closed smooth conic Lagrangian
submanifold
and let $F\in\Derb(\cor_M)$ be simple along $\Lambda$. Then
\eq
\label{eq:simplemuhom}
&& \muhom(F,F) |_{\Lambda} \simeq \cor_\Lambda.
\eneq

\section{Quantization of Hamiltonian isotopies}
\label{section:GKS} 
In this section, we recall the main theorem of~\cite{GKS10}.

We first recall some notions of symplectic geometry. Let $\symx$ be a
symplectic manifold with symplectic form $\omega$. We denote by $\symx^a$ the
same manifold endowed with the symplectic form $-\omega$.  The symplectic
structure induces the Hamiltonian isomorphism $\h\cl T\symx \isoto T^*\symx$
by $\h(v) = \iota_v(\omega)$, where $\iota_v$ denotes the
contraction with $v$
(in case $\symx$ is a cotangent bundle we have $\h=-H^{-1}$, where $H$ is used
in Definition~\ref{def:coisotropic}).
To a vector field $v$ on $\symx$ we associate in this way a $1$-form
$\h(v)$ on $\symx$.  For a $C^\infty$-function
$f\cl \symx\to \R$, the Hamiltonian vector field of $f$ is by definition
$H_f \eqdot -\h^{-1}(df)$.

A vector field $v$ is called symplectic if its flow preserves $\omega$.
This is equivalent to $\shl_v(\omega) = 0$ where $\shl_v$ denotes
the Lie derivative of $v$.  By Cartan's formula
($\shl_v= d\,\iota_v+ \iota_v\,d$) this is again equivalent
to $d(\h(v)) = 0$ (recall that $d\omega=0$).  The vector field $v$ is
called Hamiltonian if $\h(v)$ is exact, or equivalently
$v=H_f$ for some function $f$ on $\symx$.

Let $I$ be an open interval of $\R$ containing the origin and let
$\Phi\cl \symx \times I\to \symx$ be a map such that
$\phi_s\eqdot\Phi(\cdot,s)\cl \symx \to \symx$ is a symplectic isomorphism for
each $s\in I$ and is the identity for $s=0$. The map $\Phi$ induces a time
dependent vector field on $\symx$
\eq
&& v_\Phi \eqdot \frac{\partial\Phi}{\partial s} \cl \symx\times I\to T\symx.
\eneq
The ``time dependent'' $1$-form
$\beta = \h(v_\Phi) \cl \symx\times I\to T^*\symx$
satisfies $d(\beta_s) =0$ for any $s\in I$.
The map $\Phi$ is called a Hamiltonian isotopy if $v_{\Phi,s}$
is Hamiltonian, that is, if $\beta_s$ is exact, for any $s$.  In
this case we can write $\beta_s = - d(f_s)$ for some $C^\infty$-function 
$f\cl\symx \times I\to\R$. 
Hence we have
\eqn
&&\frac{\partial\Phi}{\partial s} =H_{f_s}.
\eneqn

\medskip

The fact that the isotopy $\Phi$ is Hamiltonian can be interpreted as a
geometric property of its graph as follows.  For a given $s\in I$ we let
$\Lambda_{s}$ be the graph of $\phi_s^{-1}$
and we let $\Lambda'$ be the family of $\Lambda_s$'s:
\eqn
&&\Lambda_s=\set{(\phi_s(v),v)}{v\in\symx^a} \subset \symx\times\symx^a, \\
&&\Lambda'=
\set{(\phi_s(v),v,s)}{v\in\symx^a,\;s\in I}\subset\symx\times\symx^a\times I.
\eneqn
Thus $\Lambda_s$ is a Lagrangian submanifold of $\symx \times \symx^a$. Now we
can see that $\Phi$ is a Hamiltonian isotopy if and only if
there exists a Lagrangian submanifold  $\Lambda \subset \symx \times \symx^a \times T^*I$
such that, for any $s\in I$,
\eq\label{eq:lambdacirclambda}
&&\Lambda_{s} = \Lambda\conv T^*_{s}I.
\eneq
(Here, the notation $\scbul\conv\scbul$ is a slight generalization
of~\eqref{eq:convolution_of_sets} to the case where the symplectic manifolds are no
more cotangent bundles.)
In this case  $\Lambda$ is written
\eq
\label{eq:def-lambda}
\Lambda & = &
\set{\bl\Phi(v,s), v, s, -f(\Phi(v,s),s)\br}{ v\in\symx, s\in I},
\eneq
where the function $f\cl \symx \times I\to\R$ is defined up to addition of a
function depending on $s$ by $v_{\Phi,s} = H_{f_s}$.

\subsubsection*{Homogeneous case}
Let us come back to the case $\symx=\dTM$ and  
consider $\Phi\cl \dTM\times I\to \dTM$ such that 
\eq\label{hyp:isot1}
&&\begin{cases}
\mbox{$\phi_s$ is a homogeneous symplectic isomorphism for each $s\in I$,} \\
\phi_0 = \id_{\dTM}.
\end{cases}
\eneq
In this case $\Phi$ is a Hamiltonian isotopy and there exists a unique
homogeneous function $f$ such that $v_{\Phi,s} = H_{f_s}$. It is given by
\eq\label{eq:defin_of_f}
f = \langle \alpha, v_\Phi \rangle 
\cl \dTM\times I\to \R.
\eneq
Since $f$ is homogeneous of degree $1$ in the fibers of $\dTM$, 
the Lagrangian submanifold $\Lambda$ of
$\dTM\times \dTM\times T^*I$ associated to $f$ in~\eqref{eq:def-lambda}
is $\R^+$-conic.

We say that $F\in \Der(\cor_M)$ is locally bounded if for any relatively
compact open subset $U\subset M$ we have $F|_U \in \Derb(\cor_U)$.  We denote
by $\Derlb(\cor_M)$ the full subcategory of $\Der(\cor_M)$ consisting of
locally bounded objects.
\begin{theorem}\label{th:3}{\rm(\cite[Th~4.3]{GKS10}.)}
Consider a  homogeneous Hamiltonian isotopy $\Phi$ 
satisfying the hypotheses~\eqref{hyp:isot1}.
Let us consider the following
conditions on $K\in\Derlb(\cor_{M\times M\times I})$:
\banum
\item
$\SSi(K)\subset\Lambda\cup T^*_{M\times M\times I}(M\times M\times I)$,
\label{cond:qhi1}
\item
$K_0\simeq \cor_\Delta$,
\label{cond:qhi3}
\item
both projections $\Supp(K)\tto M\times I$ are proper,
\label{cond:qhi4}
\item
$K_s\conv\opb{K_s}\simeq \opb{K_s}\conv K_s\simeq \cor_\Delta$,
where $K_s^{-1}=v^{-1}\rhom(K_s,\omega_M\etens\cor_M)$ and $v(x,y)=(y,x)$.
\label{cond:qhi2}
\eanum
Then we have
\bnum
\item
The conditions~\eqref{cond:qhi1} and~\eqref{cond:qhi3}
imply the other two conditions~\eqref{cond:qhi4} and~\eqref{cond:qhi2}.
\item
There exists $K$ satisfying \eqref{cond:qhi1}--\eqref{cond:qhi2}.
\item
Moreover
such a $K$ satisfying the conditions \eqref{cond:qhi1}--\eqref{cond:qhi2}
is unique up to a unique isomorphism.
\enum
\end{theorem}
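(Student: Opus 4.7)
The plan is to establish uniqueness (iii) first, then use it as the engine for a local-to-global existence argument (ii); part (i) will then be almost automatic, since any $K$ satisfying (a) and (b) must coincide with the kernel produced in (ii).

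For uniqueness, suppose $K, K' \in \Derlb(\cor_{M \times M \times I})$ both satisfy (a) and (b), and set $L \eqdot \rhom(K, K')$. Theorem~\ref{th:opboim2} gives $\SSi(L) \subset \SSi(K') \hatp \SSi(K)^a$, and the key geometric input is the estimate
\[
 (\Lambda \hatp \Lambda^a) \cap \bl T^*_{M\times M}(M\times M) \times T^*I \br \subset T^*_{M\times M\times I}(M\times M\times I),
\]
which follows from the explicit parametrization~\eqref{eq:def-lambda}: two points of $\Lambda$ whose $T^*M \times T^*M$ covectors cancel in the sense of $\hatp$ force both $\R^+$-conic covectors to degenerate simultaneously, because the graph of $\phi_s$ lives in $\dTM \times \dTM$, and this drags the $T^*I$ component along to $0$. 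Working over relatively compact opens $V \subset M \times M$ so that properness holds, Corollary~\ref{cor:rsectFt} then shows that $\rsect(V; L_s)$ is independent of $s$ up to canonical isomorphism; at $s=0$ it computes (a local version of) $\RHom(\cor_\Delta, \cor_\Delta)$, in which the identity element of $\cor_\Delta$ is distinguished. Lifting this identity produces a morphism $\alpha\cl K \to K'$; its cone $C$ again satisfies hypothesis (a) and has $C|_{s=0} = 0$, so applying the same propagation argument to $C$ forces $C \simeq 0$, proving $\alpha$ is an isomorphism.

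For existence, I would construct $K$ locally and glue. At each point of $\Lambda$ the germ is a smooth conic Lagrangian submanifold, and the QCT formalism of~\cite[\S 7.2]{KS90} recalled around~\eqref{eq:QCT1} furnishes a simple sheaf $K^{\loc}$ microsupported on $\Lambda$ in a neighborhood. Starting from a patch containing $\{s=0\}$ on which $K^{\loc}_0 \simeq \cor_\Delta$ and covering the remaining range of $s$ by further QCT patches, the uniqueness argument above provides canonical transition isomorphisms on overlaps (since on each overlap both candidates satisfy (a) and share initial data after transport), and these assemble into a global $K$ satisfying (a) and (b). Property (c) is extracted from the combination of the microsupport bound and $K_0 \simeq \cor_\Delta$: the Lagrangian $\Lambda$ being the graph-type object associated with a homogeneous isotopy forces $\Supp(K)$ to remain ``graph-like'' over $I$. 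Property (d) then follows by running the same construction for $\Phi^{-1}$ to produce a candidate $K^{-1}$, and applying the uniqueness of (iii) to $K \conv K^{-1}$, whose microsupport bound $\Lambda \aconv \Lambda^{-1}$ reduces to the zero section and whose value at $s=0$ is $\cor_\Delta$.

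Part (i) follows at once: any $K$ satisfying (a) and (b) is isomorphic, by uniqueness, to the kernel constructed in (ii), hence also satisfies (c) and (d). The main obstacle I expect is the geometric cancellation estimate for $\Lambda \hatp \Lambda^a$; because $\Lambda$ is only conic away from the zero section of $\dTM$, one must chase the normal-cone sequences from~\eqref{def:+hat} carefully to rule out hidden contributions approaching the zero section. A secondary technical point is arranging the local QCT patches so that triple-overlap compatibility holds automatically, which is exactly where the strong form ``unique up to a \emph{unique} isomorphism'' in (iii) is indispensable.
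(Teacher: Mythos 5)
This theorem is quoted from~\cite{GKS10} and the paper offers no proof of its own, so your attempt can only be measured against the argument there. Your uniqueness step is essentially the right one and matches the actual proof: the estimate $(\Lambda\hatp\Lambda^a)\cap(T^*_{M\times M}(M\times M)\times T^*I)\subset T^*_{M\times M\times I}(M\times M\times I)$ is correct (it uses the homogeneity and continuity of $f$ together with the $\vert x_n-y_n\vert\cdot\vert\xi_n\vert\to 0$ clause of~\eqref{def:+hat}), and propagation in the $s$-direction of both $\rsect(\,\cdot\,;\rhom(K,K'))$ and of the vanishing of the cone is exactly how uniqueness is obtained. Two caveats: the application of Corollary~\ref{cor:rsectFt} needs properness over $I$, which is condition~(c); so either you establish~(c) from~(a)+(b) first (as is done in~\cite{GKS10}, by a separate support-propagation argument), or your derivation of~(i) from~(iii) is circular, since~(iii) as stated assumes~(a)--(d).

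The genuine gap is in your existence argument. Gluing local QCT patches over $M\times M\times I$ does not work as described: a QCT produces an object only in a microlocalized category $\Derb(\cor;U)$ near a point of $\Lambda$, not an honest sheaf on an open subset of the base with controlled microsupport everywhere over that open set; and, more fundamentally, objects of a derived (triangulated) category do not satisfy descent --- even a full system of compatible isomorphisms on double and triple overlaps does not allow one to glue objects, so the ``unique isomorphism'' of~(iii) does not rescue the construction. This is not a secondary technical point; it is precisely the obstruction that the proof in~\cite{GKS10} is designed to avoid. There, one glues only in the $s$-variable, where the gluing is effected by explicit composition of kernels $\conv$ (uniqueness identifying the overlaps), and the local-in-$s$ existence is obtained constructively: after reduction, the isotopy is decomposed into pieces generated by nowhere-vanishing homogeneous Hamiltonians, whose associated conic Lagrangians are conormal bundles of hypersurfaces in $M\times M\times I$, quantized by (shifted) constant sheaves on closed subsets. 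Without some such explicit local model and a composition-based globalization, part~(ii) of your proposal does not go through.
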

We shall call $K$ the {\em quantization} of $\Phi$ on $I$, or the quantization
of the family $\{\phi_s\}_{s\in I}$.

\subsubsection*{Non homogeneous case}
Theorem~\ref{th:3} is concerned with homogeneous Hamiltonian isotopies.  The
next result will allow us to adapt it to non homogeneous cases.
Let $\Phi\cl T^*M\times I\to T^*M$ be a Hamiltonian isotopy and assume
\eq\label{eq:hyp-support-isot}
\left\{\begin{minipage}[c]{9cm}
there exists a compact set $C\subset T^*M$ such that
$\phi_s|_{T^*M \setminus C}$ is the identity for all $s\in I$.
\end{minipage}\right.
\eneq
We denote by  $T^*_{\{\tau>0\}}(M\times\R)$ the open subset $\{\tau>0\}$ of
$T^*(M\times\R)$ and we define the map 
\eq\label{eq:rho}
&&\rho\cl T^*_{\{\tau>0\}}(M\times\R)\to T^*M,\quad (x,t;\xi,\tau)\mapsto
(x;\xi/\tau).
\eneq
\begin{proposition}\label{pro:homnonhomHIso}{\rm(\cite[Prop.~A.6]{GKS10}.)}
There exist a homogeneous Hamiltonian isotopy
$\tw \Phi\cl \dT^*(M\times \R)\times I\to \dT^*(M\times\R)$
and $C^\infty$-functions $u\cl T^*M \times I\to\R$  and $v\cl I\to\R$
such that the following diagram commutes:
\eqn
&&\xymatrix@C=2cm{
T^*_{\{\tau>0\}}(M\times \R)\times I\ar[r]^{\tw\Phi}\ar[d]_{\rho\times\id_I}
                           & T^*_{\{\tau>0\}}(M\times \R)\ar[d]_\rho  \\
T^*M\times I\ar[r]^{\Phi}              & T^*M 
}\eneqn
and
\eq\label{eq:twphi}
&&\tw \Phi((x;\xi),(t;\tau),s) = ((x';\xi'), (t+ u(x;\xi/\tau,s);\tau)) , \\
\label{eq:twphi2}
&&\tw \Phi((x;\xi),(t;0),s) = ((x;\xi), (t+ v(s);0)),
\eneq
where $(x';\xi'/\tau) = \phi_s(x;\xi/\tau)$.
Moreover we have $u(x;\xi/\tau,s) = v(s)$ for $(x;\xi/\tau)\not\in C$.
\end{proposition}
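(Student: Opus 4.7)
The plan is to construct $\tilde\Phi$ as the flow of a homogeneous Hamiltonian $\tilde f$ on $\dot T^*(M\times\R)$ obtained from the Hamiltonian of $\Phi$ by the classical trick of using the extra variable $\tau$ as a homogenization parameter. First choose a time-dependent Hamiltonian $f\cl T^*M\times I\to\R$ generating $\Phi$, i.e., $v_{\Phi,s}=H_{f_s}$. Since $v_{\Phi,s}$ vanishes off $C$, each $f_s$ is locally constant on $T^*M\setminus C$; enlarging $C$ if needed so that its complement is connected, we have $f_s\equiv c(s)$ off $C$ for some smooth function $c\cl I\to\R$.

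Define $\tilde f\cl\dot T^*(M\times\R)\times I\to\R$ by $\tilde f_s(x,t;\xi,\tau)=\tau\, f_s(x;\xi/\tau)$ for $\tau>0$ and $\tilde f_s(x,t;\xi,\tau)=\tau\, c(s)$ for $\tau\leq 0$. The essential point, and the main technical step of the whole proof, is that $\tilde f$ is of class $C^\infty$ on all of $\dot T^*(M\times\R)\times I$: any point with $\tau=0$ necessarily has $\xi\neq 0$, so $|\xi/\tau|\to\infty$ as $\tau\to 0^+$, and the compact support hypothesis~\eqref{eq:hyp-support-isot} forces $f_s(x;\xi/\tau)=c(s)$, showing that the two formulas agree on an open neighborhood of $\{\tau=0\}$ in $\dot T^*(M\times\R)$. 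Moreover $\tilde f$ is independent of $t$ and homogeneous of degree $1$ in $(\xi,\tau)$. Let $\tilde\Phi$ be the flow of $H_{\tilde f}$; it is defined on all of $I$ because $\tau$ is conserved (as $\partial_t\tilde f=0$) and $f_s-c(s)$ has compact support, which makes the other components of $H_{\tilde f}$ bounded in any finite time.

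A direct local computation gives, for $\tau>0$,
$$
H_{\tilde f_s}=(\partial_\eta f_s)\partial_x-\tau(\partial_x f_s)\partial_\xi+(f_s-\eta\cdot\partial_\eta f_s)\partial_t,\qquad \eta=\xi/\tau,
$$
so that the projection of $H_{\tilde f_s}$ via $\rho$ coincides with $H_{f_s}$ on $T^*M$; this yields the commutativity of the diagram, as well as the conservation of $\tau$ and the relation $(x';\xi'/\tau)=\phi_s(x;\xi/\tau)$. Since $\dot t$ is homogeneous of degree $0$ in the fibers, it depends only on $(x,\xi/\tau,s)$, so the flow has the form~\eqref{eq:twphi} for some smooth function $u$. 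For $\tau\leq 0$ one has $H_{\tilde f_s}=c(s)\partial_t$, so the flow simply translates $t$ by $v(s)\eqdot\int_0^s c(\sigma)\,d\sigma$, which gives~\eqref{eq:twphi2}. Finally, if $(x;\xi/\tau)\notin C$ then $\phi_\sigma$ fixes this point for every $\sigma$, so along the flow $f_\sigma=c(\sigma)$ with vanishing $\eta$-derivative, whence $\dot t=c(\sigma)$ and $u(x;\xi/\tau,s)=v(s)$, as required.
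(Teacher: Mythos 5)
Your proof is the standard homogenization argument, which is precisely how the cited reference \cite{GKS10} establishes this proposition (the present paper gives no proof, only the citation); the formula for $H_{\tilde f_s}$, the conservation of $\tau$, the identification of the $(x,\xi/\tau)$-motion with $\phi_s$, the completeness of the flow, and the reading-off of $u$ and $v$ from the $\partial_t$-component $f_s-\eta\cdot\partial_\eta f_s$ are all correct. The one step that does not hold as written is ``enlarging $C$ if needed so that its complement is connected'': this is impossible whenever $T^*M$ has more than one end, e.g.\ for $M=S^1$, where the twist generated by $f(x;p)=g(p)$ with $g\equiv 0$ for $p\le -1$ and $g\equiv 1$ for $p\ge 1$ satisfies~\eqref{eq:hyp-support-isot} although $f_s$ takes two different constant values on the two unbounded components of $T^*M\setminus C$ (and $f_s$ is determined by $\Phi$ only up to a function of $s$, so this cannot be normalized away). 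Your gluing of $\tilde f$ across $\{\tau=0\}$ still goes through with $c(s)$ replaced by the relevant locally constant function, so $\tilde\Phi$ exists in all cases, but the translation in~\eqref{eq:twphi2} and the constant in the final assertion then depend on the end of $T^*M$ singled out by the ray through $(x;\xi)$. This is arguably a subtlety of the statement itself rather than of your argument, and in the connected case with $\dim M\ge 2$ (where the complement of a large disk bundle is indeed connected) your proof is complete; but the point deserves to be addressed rather than dispatched by enlarging $C$.
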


\section{Convolution and localization}\label{section:conv}
Most of the ideas of this section are due to Tamarkin~\cite{Ta}.
The reader will be aware that our notations do not follow Tamarkin's
ones. We also give some proofs
which may be rather different from Tamarkin's original ones.

In all this section, we consider a trivial vector bundle
\eq
&&q\cl E=M\times V \to M
\eneq
and a trivial cone $\gamma= M\times\gamma_0\subset E$ such that
\eq\label{eq:hypcone}
&& \mbox{$\gamma_0$ is a closed convex proper cone of $V$ containing 
$0$ and $\gamma_0\not=\{0\}$.}
\eneq
The polar cone $\gamma_0^\circ\subset V^*$ is 
 the closed convex cone given by 
\eqn
&&\gamma_0^\circ= \{\theta\in V^*; \langle\theta,v\rangle \geq 0\}
\mbox{ for all }v\in\gamma_0.
\eneqn
Many results could be generalized to general
vector bundles and general proper convex cones, but in practice we 
shall use these results with $V=\R$ and $\gamma_0=\{t\in\R;t\geq0\}$.  
Recall that a subset in $T^*M\times V^*$ is a cone if it is invariant 
by the diagonal action of $\R^+$ (see~\eqref{eq:defcone}).

\begin{definition}\label{def:strictcone}
A closed cone $A\subset T^*M\times V^*$ is called a strict $\gamma$-cone
if $A\subset (T^*M\times \Int\gamma^\circ_0)\cup T^*_MM\times \{0\}$.
\end{definition}
\begin{example}\label{exa:strictgc}
Assume $V=\R$ and $M$ is open in $\R^n$.
Denote by $(t;\tau)$ the coordinates on $T^*\R$ and by 
$(x;\xi)$ the coordinates on $T^*M$. Let 
$\gamma_0=\{t\in\R;t\geq0\}$.  Then a closed cone $A\subset T^*M\times V^*$
is a strict $\gamma$-cone if, for any compact subset $C\subset M$,
there exists $a\in\R, a>0$ such that  $\tau\geq a\vert\xi\vert$ for all
$(x;\xi,\tau)\in A\cap (\opb{\pi_M}(C)\times V^*)$.
\end{example} 
\begin{remark}\label{rem:strictgc}
  If $f\cl N\to M$ is a morphism of manifolds and $A\subset T^*M\times V^*$
  is a strict $\gamma$-cone, then $f\times \id_V\cl N\times V \to M\times V$
  is non-characteristic for $\opb{\piw_E}(A)$ (where $\opb{\piw_E}$ is
  defined in~\eqref{eq:projfibre}).
\end{remark}

In the sequel, we consider the maps
\eq\label{eq:s}
&\ba{rcl}
&q_1,q_2,s\cl V\times V\to V,\\
&q_1(v_1,v_2)=v_1,\quad q_2(v_1,v_2)=v_2,\quad s(v_1,v_2)= v_1+v_2.
\ea\eneq
If there is no risk of confusion, we still denote by $q_1,q_2,s$ the associated maps 
$M\times V\times V\to M\times V$. 

We denote by $\delta_M$  the diagonal embedding
\eq\label{eq:delta}
&&\delta_M\cl M\hookrightarrow M\times M
\eneq
 and if there is no risk of confusion, 
we still denote by $\delta_M$ the associated map 
$M\times V\times V\hookrightarrow M\times M\times V\times V$,
that is, the map $E\times_ME\hookrightarrow E\times E$.

The maps $s$ and $\delta_M$ give rise to  the maps:
\eqn
&&T^*(E\times_ME)\from[(\delta_M)_d]M\times_{M\times M}T^*(E\times_M E)
\to[(\delta_M)_\pi]T^*(E\times E),\\
&&T^*(E\times_{M}E)\from[s_d]V\times_{V\times V}T^*(E\times_M E)\to[s_\pi]T^*E.
\eneqn
On $T^*E$ we have the antipodal map $a$, but there is another involution
associated with $a$ and the involution $(x,y)\mapsto (x,-y)$ on $E$.
We denote by $\alpha$ the involution of  $T^*E$
\eq\label{eq:involalpha}
&&\alpha\cl (x,y;\xi,\eta)\mapsto(x,-y;-\xi,\eta)
\eneq
and for a subset $A\subset T^*E$ we denote by $A^\alpha$ its image by
this involution.
We also denote by $\alpha$ the involution of $T^*M\times V^*$ defined by
$(x;\xi,\eta)\mapsto(x;-\xi,\eta)$. Hence for $A \subset T^*M\times V^*$
we have, using the notation~\eqref{eq:projfibre},
$\opb{\piw_E}(A^\alpha) = \opb{\piw_E}(A)^\alpha$.

\subsubsection*{Convolution}
Recall the notations~\eqref{eq:sharp} and~\eqref{not:udag2}.
\begin{notation}\label{not:starhat}
For two closed subsets $A$ and $B$ in $T^*E$,
we set
\eq\label{eq:convsets}
&&A\hatstar B\eqdot s_\sharp \delta_M^{\sharp}(A\times B).
\eneq
\end{notation}

In general, the calculation of $A\hatstar B$ is difficult. 
In Lemmas~\ref{le:boundforhatstar} and~\ref{lem:star-gamma-cones} 
below we consider special situations in which this calculation is easy.
\begin{lemma}\label{le:boundforhatstar}
Let $A'$ and $B'$ be two closed cones in $V^*$. Set 
$A = T^*M\times V\times A'$ and $B=T^*M\times V \times B'$.
Then
\eq\label{eq:easy-bound-hatstar}
&&A\hatstar B = A\cap B.
\eneq
\end{lemma}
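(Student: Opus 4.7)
The plan is to unfold the definition $A\hatstar B = s_\sharp\delta_M^\sharp(A\times B)$ in two steps, exploiting that $A$ and $B$ are unconstrained in the $T^*M$ directions. Throughout I use the canonical identification $T^*(E\times E)\simeq T^*(M\times M)\times T^*(V\times V)$, under which $A\times B$ corresponds to $T^*(M\times M)\times V\times V\times A'\times B'$, and the diagonal $\delta_M$ factors as $\delta^{(1)}_M\times\id_{V\times V}$ with $\delta^{(1)}_M\cl M\hookrightarrow M\times M$.

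The first step is to establish
\[
\delta_M^\sharp(A\times B) \;=\; T^*M\times V\times V\times A'\times B'
\;\subset\; T^*M\times T^*(V\times V).
\]
The inclusion ``$\supset$'' follows directly from the sequential criterion~\eqref{def:fdiese}: given a target point $(x_0,v_{1,0},v_{2,0};\xi_0,\eta_{1,0},\eta_{2,0})$ in the right-hand side, choose the approximating sequence on the graph itself (i.e.\ $y_n=\delta_M(x_n)$ and $w_{i,n}=v_{i,n}$), so that the product condition $|y_n-\delta_M(x_n)|\cdot|\eta_n|\to 0$ is automatic, split $\xi_0$ as an arbitrary sum $\xi_{1,n}+\xi_{2,n}$, and take $\eta_{i,n}=\eta_{i,0}$. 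The reverse inclusion ``$\subset$'' uses only the closedness of $A'$ and $B'$ to pass the cotangent-fiber limits $\eta_{i,n}\to\eta_{i,0}$ into the cones.

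The second step applies $s_\sharp$ in the sense of Notation~\ref{not:usharp}. Writing $s=\id_M\times s_V$ with $s_V\cl V\times V\to V$ the sum, the transpose ${}^ts_V$ is the diagonal embedding $V^*\hookrightarrow V^*\times V^*$, $\theta\mapsto(\theta,\theta)$. The image $v_\pi\bigl(\delta_M^\sharp(A\times B)\bigr)=T^*M\times V\times A'\times B'$ is already closed, so no closure operation is needed, and its $v_d$-preimage is exactly $\{(x;\xi,w,\theta):(\theta,\theta)\in A'\times B'\}=T^*M\times V\times(A'\cap B')$. Comparing with $A\cap B=T^*M\times V\times(A'\cap B')$ finishes the proof.

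No step is substantive; the exercise is essentially bookkeeping of the identifications. If anything requires care it is the first step, namely verifying that the product structure reduces the limiting condition in $f^\sharp$ to the closed-cone memberships $\eta_i\in A', B'$ and nothing more.
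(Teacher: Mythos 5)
Your proof is correct and follows essentially the same approach as the paper: first compute $\delta_M^\sharp(A\times B)=T^*M\times V\times V\times A'\times B'$ from the sequential criterion~\eqref{def:fdiese}, then apply $s_\sharp$. The paper compresses the second step into a citation of Corollary~\ref{cor:easyestimatproperm}, whose proof carries out precisely the $v_\pi$/$v_d$ computation you spelled out explicitly.
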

\begin{proof}
Using the hypothesis on $A$ and $B$, it follows from~\eqref{def:fdiese} that
\eqn
&&\delta_M^{\sharp}(A\times B)=T^*M\times V\times V\times A'\times B'.
\eneqn
Then the result follows from Corollary~\ref{cor:easyestimatproperm}.
\end{proof}

\begin{notation}\label{not:+hatM}
  Let $A$ and $B$ be two closed cones in $T^*M\times V^*$. We set
\eq\label{eq:+hatM}
&&\ba{rcl}&&A\hplus[M]B=\{ (x;\xi,\eta) \in T^*M \times V^*;
\mbox{there exist $\xi_1,\xi_2 \in T^*_xM$ such} \\
&& \hspace{2.5cm}
\mbox{that $(x;\xi_1,\eta)\in A$, $(x;\xi_2,\eta)\in B$
and $\xi=\xi_1+\xi_2$}\}.\ea
\eneq
\end{notation}

\begin{lemma}\label{lem:star-gamma-cones}
Consider two closed strict $\gamma$-cones $A$ and $B$ in $T^*M \times V^*$.
Then $A\hplus[M]B$ is also a strict $\gamma$-cone and
$\opb{\piw}_E(A)\hatstar \opb{\piw}_E(B) = \opb{\piw}_E(A\hplus[M]B)$.

In particular, if $A\cap B\subset T^*_MM\times\{0\}$, then
\eqn
&&(\opb{\piw}_E(A)\hatstar 
(\opb{\piw}_E(B))^\alpha)\cap (T^*_MM\times T^*V)\subset T^*_EE.
\eneqn
\end{lemma}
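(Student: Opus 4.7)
The plan is to establish the three claims in sequence; the technical heart is the explicit computation of $\hatstar$ on cones lifted by $\piw_E$, which reduces it to $\hplus[M]$.

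First I would show that $A\hplus[M]B$ is a strict $\gamma$-cone. Conicity is immediate. The key preliminary is that any closed strict $\gamma$-cone $A$ admits, over each compact $K\subset M$, a uniform bound $|\xi|\leq C_K|\eta|$ for $(x;\xi,\eta)\in A$ with $x\in K$: on the compact set $A\cap(T^*_KM\times V^*)\cap\{|\xi|+|\eta|=1\}$, the condition $\eta=0$ is forbidden by Definition~\ref{def:strictcone}, so $|\xi|/|\eta|$ is bounded there, and the estimate extends by conicity (this generalizes Example~\ref{exa:strictgc}). Given this, if $(x;\xi,\eta)\in A\hplus[M]B$ with $\eta=0$ then both $\xi_1$ and $\xi_2$ vanish, so $\xi=0$; this places $A\hplus[M]B$ inside the strict $\gamma$-cone region. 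Closedness follows from the same estimate: in any convergent sequence with decompositions $\xi_n=\xi_{1,n}+\xi_{2,n}$, the pieces satisfy $|\xi_{i,n}|\leq C_K|\eta_{i,n}|$ and are uniformly bounded, so convergent subsequences of each $\xi_{i,n}$ exist with limits in $A$ and $B$.

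For the main identity I apply Remark~\ref{rem:strictgc} (together with a direct verification for the diagonal) to see that $\delta_M$ and $s$ are non-characteristic for the cones in play, so $\delta_M^{\sharp}$ and $s_\sharp$ can be computed from the ordinary inverse/direct image formulas. An explicit coordinate calculation then shows that $\delta_M^{\sharp}$ restricts to $x_1=x_2=x$ and adds the $\xi$-components, producing tuples $(x,v_1,v_2;\xi_1+\xi_2,\eta_1,\eta_2)$ with $(x;\xi_1,\eta_1)\in A$ and $(x;\xi_2,\eta_2)\in B$; then $s_\sharp$, via the map $v_d\cl(x,v;\xi,\eta)\mapsto(x,v;\xi,\eta,\eta)$ of Notation~\ref{not:usharp}, forces $\eta_1=\eta_2$, while $v_\pi$ merges $(v_1,v_2)$ into $v_1+v_2$ with no remaining constraint on its value. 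The result is exactly $\opb{\piw_E}(A\hplus[M]B)$ before the closure in the definition of $u_\sharp$, and that closure is superfluous because this image is already closed by the first step.

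Finally, for the last assertion, $B^\alpha$ is again a strict $\gamma$-cone (negating $\xi$ preserves the defining condition) and $\opb{\piw_E}(B)^\alpha=\opb{\piw_E}(B^\alpha)$. Applying the main identity to $A$ and $B^\alpha$ yields $\opb{\piw_E}(A)\hatstar\opb{\piw_E}(B)^\alpha=\opb{\piw_E}(A\hplus[M]B^\alpha)$. Intersecting with $T^*_MM\times T^*V$ forces $\xi=0$; then for $(x;0,\eta)\in A\hplus[M]B^\alpha$ one has $\xi_2=-\xi_1$, so $(x;\xi_1,\eta)\in A$ and $(x;\xi_1,\eta)=(x;-\xi_2,\eta)\in B$. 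The hypothesis $A\cap B\subset T^*_MM\times\{0\}$ then forces $\xi_1=0$ and $\eta=0$, placing the point in the zero section $T^*_EE$. The main technical obstacle in the whole argument is verifying that the closure in Notation~\ref{not:usharp} is superfluous for $s_\sharp$ here; this rests on the uniform strict $\gamma$-cone estimate from the first step, which controls the decomposition $\xi=\xi_1+\xi_2$ in terms of $|\eta|$ and permits the compactness argument. The rest is coordinate bookkeeping with the definitions of $f^{\sharp}$ and $u_\sharp$.
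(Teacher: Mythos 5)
Your proof is correct and follows essentially the same route as the paper: show $A\hplus[M]B$ is a strict $\gamma$-cone, replace $\delta_M^{\sharp}$ by $\delta_{M,d}\opb{\delta_{M,\pi}}$ via non-characteristicity (Remark~\ref{rem:strictgc}), and then compute $s_\sharp$ explicitly. The paper's version is terser — it asserts the strict-cone property and the final $s_\sharp$ step without comment — whereas you correctly isolate and justify the one delicate point, namely that the closure in Notation~\ref{not:usharp} is harmless because the uniform estimate $|\xi|\leq C_K|\eta|$ on strict $\gamma$-cones makes the relevant sets already closed.
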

\begin{proof}
The fact that $A\hplus[M]B$ is a strict $\gamma$-cone follows
easily from the definition.

By Remark~\ref{rem:strictgc},
$\opb{\piw}_E(A)\times \opb{\piw}_E(B)$ is non-charac\-te\-ris\-tic for the
inclusion $\delta_M\cl M \times V \times V \to M\times M \times V \times V$
and we may replace $\delta_M^{\sharp}$ by
$\delta_{M,d} \opb{\delta_{M,\pi}}$ in~\eqref{eq:convsets}.
We find $\delta_M^{\sharp}(\opb{\piw}_E(A)\times (\opb{\piw}_E(B)))
= \opb{\piw}_{M\times V \times V}(C_1)$, where
\eqn
&& C_1 = \{ (x;\xi,\eta_1,\eta_2) \in T^*M \times V^*\times V^*;
\mbox{there exist $\xi_1,\xi_2 \in T^*_xM$ such} \\
&&  \hspace{3.5cm}
\mbox{that $(x;\xi_1,\eta_1) \in A$, $(x;\xi_2,\eta_2) \in B$
and $\xi=\xi_1+\xi_2$}\}
\eneqn
and the result follows.
\end{proof}
Using the notations~\eqref{eq:s}, the convolution of sheaves is defined by:
\begin{definition}\label{def:convs}
For $F,G\in\Derb(\cor_{E})$, we set
\eq
F\star G&\eqdot&
\reim{s}(\opb{q_1}F\ltens\opb{q_2}G)\simeq\reim{s}\opb{\delta_M}(F\letens G),
\label{eq:convtens}\\
F\npstar G&\eqdot& 
\roim{s}(\opb{q_1}F\ltens\opb{q_2}G)\simeq\roim{s}\opb{\delta_M}(F\letens G).
\label{eq:NPconvtens}
\eneq
\end{definition}
The morphism $\cor_\gamma\to\cor_{M\times \{0\}}$ gives the morphism
\eq\label{eq:convgamma-id}
&&F\npstar\cor_\gamma\to F.
\eneq
Recall the following result:
\begin{proposition}{\rm (Microlocal cut-off lemma~\cite[Prop.~5.2.3,~3.5.4]{KS90})}
\label{prop:cut-offlemma}
Let $F\in\Derb(\cor_{E})$. Then
$\SSi(F) \subset T^*M \times V\times\gamma_0^\circ$
if and only if the morphism~\eqref{eq:convgamma-id} is an isomorphism.
\end{proposition}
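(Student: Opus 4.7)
The plan is to prove the two implications separately. The easier direction ($\Leftarrow$) is a direct microsupport estimate; the harder direction ($\Rightarrow$) is a stalk computation reduced to a microlocal Morse argument.

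\medskip

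\noindent\textbf{Direction ($\Leftarrow$).} Assume $F \npstar \cor_\gamma \isoto F$. I would directly bound $\SSi(F \npstar \cor_\gamma)$, proceeding in three steps. First, a standard calculation gives $\SSi(\cor_\gamma) \subset T^*_M M \times \gamma_0 \times \gamma_0^\circ$: this follows from the definition of the polar cone together with the microsupport of the constant sheaf on a closed convex cone (realized, at boundary points, via supporting half-spaces as in Example~iii of Section~\ref{section:mts}). Second, Corollary~\ref{cor:opboim}(i) estimates $\SSi(\opb{q_1}F \ltens \opb{q_2}\cor_\gamma)$ as a product whose two $V^*$-factors come respectively from $F$ and from $\gamma_0^\circ$. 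Third, because $s\cl M \times V \times V \to M \times V$ is a constant linear map of trivial vector bundles over $M$, Corollary~\ref{cor:easyestimatproperm} applies to $\roim{s}$; here the dual map is $\tw s_d(x;\xi,\eta)=(x;\xi,\eta,\eta)$, which identifies the two $V^*$-factors and forces the surviving $\eta$ to lie in $\gamma_0^\circ$. Hence $\SSi(F \npstar \cor_\gamma) \subset T^*M \times V \times \gamma_0^\circ$, and the isomorphism transfers this bound to $\SSi(F)$.

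\medskip

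\noindent\textbf{Direction ($\Rightarrow$).} Assume $\SSi(F) \subset T^*M \times V \times \gamma_0^\circ$. I would check that the morphism $F \npstar \cor_\gamma \to F$ is an isomorphism stalkwise. Using base change for $\roim{s}$ together with the change of variables $v_1 \mapsto v_0 - v_2$ on $\opb{s}(x_0,v_0)$, the stalk $(F \npstar \cor_\gamma)_{(x_0,v_0)}$ identifies with $R\Gamma(v_0-\gamma_0; F|_{\{x_0\}\times V})$, and the natural morphism to $F_{(x_0,v_0)}$ becomes the restriction to the single point $v_0 \in v_0-\gamma_0$. To prove this restriction is an isomorphism I would apply the microlocal Morse lemma (Corollary~\ref{cor:Morse}) in the fiber $\{x_0\}\times V$ with the function $\phi(v) = \langle \theta, v_0 - v\rangle$, where $\theta \in \Int \gamma_0^\circ$ is chosen once and for all (such $\theta$ exist since $\gamma_0$ is a salient cone distinct from $\{0\}$). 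On $v_0-\gamma_0$ the function $\phi$ is nonnegative with unique zero at $v_0$, and $d\phi = -\theta$. A short convexity argument shows $-\theta \notin \gamma_0^\circ$: otherwise $0 = \tfrac12(\theta+(-\theta)) \in \Int\gamma_0^\circ$ (sum of an interior point and a point of the cone), forcing $\gamma_0^\circ = V^*$ and $\gamma_0 = \{0\}$, contradicting the hypothesis. Since the non-characteristic pullback yields $\SSi(F|_{\{x_0\}\times V}) \subset V \times \gamma_0^\circ$, we get $d\phi \notin \SSi(F|_{\{x_0\}\times V})$ on $\{\phi>0\}$, and Corollary~\ref{cor:Morse} gives $R\Gamma(v_0-\gamma_0; F|_{\{x_0\}\times V}) \isoto F_{(x_0,v_0)}$, as required.

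\medskip

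\noindent\textbf{Main obstacle.} The delicate point is the stalk computation for the non-proper direct image $\roim{s}$: the fiber $\opb{s}(x_0,v_0) \cap \{v_2 \in \gamma_0\}$ is non-compact, so proper base change does not apply directly, and one must check that the relevant derived projective limit stabilizes. Making this rigorous requires either an exhaustion argument analogous to that in Theorem~\ref{th:nonpropdirim} and Lemma~\ref{lem:SSreimroim} (using compact truncations $\gamma_0 \cap \ol{B_R(0)}$), or a careful use of the microsupport hypothesis on $F$ to ensure the cofinal system of neighborhoods of $(x_0,v_0)$ computes the stalk correctly. Once the identification is justified, the Morse argument above completes the proof.
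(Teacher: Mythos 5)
First, note that the paper gives no proof of this proposition: it is quoted from~\cite[Prop.~5.2.3, 3.5.4]{KS90}, so your attempt can only be measured against that source. Your direction ($\Leftarrow$) is correct: taking $A_1=T^*M\times V^*\times\gamma_0^\circ$ one applies Corollary~\ref{cor:easyestimatproperm} (which covers $\roim{s}$) exactly as you say, and the bound transfers through the isomorphism.

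The direction ($\Rightarrow$), however, has two genuine gaps. The first you name yourself: the identification $(F\npstar\cor_\gamma)_{(x_0,v_0)}\simeq\rsect(v_0-\gamma_0;F|_{\{x_0\}\times V})$ is a non-proper base change over the closed, non-compact fibre $v_0-\gamma_0$, and you only gesture at two possible fixes without carrying either out; this is precisely where the work lies, and it is not separable from the Morse step. The second gap is in the Morse step itself. Corollary~\ref{cor:Morse} compares sections over the \emph{open sublevel sets} $\opb{\phi}(]-\infty,b[)$, which for your $\phi(v)=\langle\theta,v_0-v\rangle$ are affine half-spaces of $V$, not the sets $(v_0-\gamma_0)\cap\{\phi<b\}$ you need; moreover $\phi$ need not be proper on $\supp(F|_{\{x_0\}\times V})$. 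To localize on $v_0-\gamma_0$ you must replace $F$ by $F\tenso\cor_{v_0-\gamma_0}$ (or $\rsect_{v_0-\gamma_0}(F)$), and by Theorem~\ref{th:opboim2} together with $\SSi(\cor_{v_0-\gamma_0})\subset(v_0-\gamma_0)\times(-\gamma_0^\circ)$ its microsupport is only bounded by $V\times(\gamma_0^\circ\hatp(-\gamma_0^\circ))$; at boundary points of $v_0-\gamma_0$ one cannot exclude $\pm\theta$ from this set (already for $\gamma_0=\{t\geq|s|\}$ in $\R^2$), so the hypothesis $d\phi\notin\SSi$ of Corollary~\ref{cor:Morse} fails for the sheaf to which you would actually have to apply it. (A smaller slip: the inclusion $\{x_0\}\times V\hookrightarrow E$ need not be non-characteristic for $\SSi(F)$, since covectors $(x;\xi,v,0)$ with $\xi\neq0$ are allowed by the hypothesis; the bound $\SSi(F|_{\{x_0\}\times V})\subset V\times\gamma_0^\circ$ still holds, but via the $f^\sharp$ estimate of Theorem~\ref{th:opboim2}, not via Theorem~\ref{th:opboim}~(iii).) The proof in~\cite{KS90} circumvents both problems by working with a shrinking family of $\gamma$-open neighbourhoods of $v_0$ and the non-characteristic deformation lemma, where properness comes from the properness of the cone $\gamma_0$; without an argument of that type your sketch does not close.
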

If $\gamma_0$ has a non-empty interior we have
$\cor_{\gamma_0} \simeq \RD'_V(\cor_{\Int \gamma_0})$ and we deduce from
Corollary~\ref{cor:opboim} (ii) that
\eq\label{eq:convgammabis}
F\npstar \cor_{\gamma}
\simeq\roim{s} \rsect_{M\times V \times \Int \gamma_0} (\opb{q_1}F).
\eneq

Following Tamarkin~\cite{Ta}, we introduce a right adjoint to the
convolution functor by setting for $F,G\in\Derb(\cor_{E})$ 
\eq\label{eq:convhom}
\rhomc(G,F)&\eqdot&\roim{q_1}\rhom(\opb{q_2}G,\epb{s}F).
\eneq
Hence for $F_1,F_2,F_3\in\Derb(\cor_{E})$, we have
\eq\label{eq:adjstarhomc}
&&\RHom(F_1\star F_2,F_3)\simeq\RHom(F_1,\rhomc(F_2,F_3)).
\eneq
We use the notation:
\eq\label{not:i}
&&i\cl E\to E \mbox{ denotes the involution $(x,y)\mapsto (x,-y)$.}
\eneq
\begin{lemma}\label{lem:convhom2}
For $F,G\in\Derb(\cor_{E})$ we have
\eqn
\rhomc(G,F) &\simeq& \roim{s}\rhom(\opb{q_2}\opb{i}G, \epb{q_1}F), \\
F \star G &\simeq& \reim{q_1}( \opb{s} F \ltens \opb{q_2}\opb{i} G) .
\eneqn
\end{lemma}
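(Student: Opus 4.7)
Both isomorphisms come from a single change of variables on $E\times_M E$, encoded by the linear involution
\[
\Phi\cl E\times_M E\isoto E\times_M E,\qquad (x,v_1,v_2)\mapsto (x,v_1+v_2,-v_2).
\]
A direct computation gives $\Phi\circ\Phi=\id$ together with the three relations
\[
s\circ \Phi = q_1,\qquad q_1\circ \Phi = s,\qquad q_2\circ \Phi = i\circ q_2.
\]
Since $\Phi$ is a diffeomorphism of real manifolds, $\reim{\Phi}\opb{\Phi}\simeq\id$, $\opb{\Phi}\simeq\epb{\Phi}$, and $\opb{\Phi}$ commutes with $\ltens$, $\rhom$ and $\epb{(\cdot)}$.

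For the second isomorphism I would insert $\reim{\Phi}\opb{\Phi}\simeq\id$ inside~\eqref{eq:convtens} and push the pull-backs through $\Phi$:
\begin{align*}
F\star G &= \reim{s}(\opb{q_1}F\ltens \opb{q_2}G) \\
&\simeq \reim{s}\reim{\Phi}\bigl(\opb{(q_1\circ \Phi)}F\ltens \opb{(q_2\circ \Phi)}G\bigr) \\
&= \reim{(s\circ \Phi)}\bigl(\opb{s}F\ltens \opb{(i\circ q_2)}G\bigr) \\
&= \reim{q_1}(\opb{s}F\ltens \opb{q_2}\opb{i}G),
\end{align*}
where at the third line the three relations above are substituted simultaneously.

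Rather than repeat the same manipulation for $\rhomc$, I would deduce the first isomorphism from the second by formal adjunction. For any $F_1\in\Derb(\cor_E)$, three successive adjunctions ($\reim{q_1}\dashv\epb{q_1}$, tensor--$\rhom$, and $\opb{s}\dashv\roim{s}$) give
\begin{align*}
\RHom(F_1,\rhomc(G,F)) &\simeq \RHom(F_1\star G,F) \\
&\simeq \RHom\bigl(\reim{q_1}(\opb{s}F_1\ltens \opb{q_2}\opb{i}G),F\bigr) \\
&\simeq \RHom\bigl(F_1,\roim{s}\rhom(\opb{q_2}\opb{i}G,\epb{q_1}F)\bigr),
\end{align*}
where the first isomorphism uses~\eqref{eq:adjstarhomc} and the second uses the formula already proved. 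Applying the Yoneda lemma identifies $\rhomc(G,F)$ with $\roim{s}\rhom(\opb{q_2}\opb{i}G,\epb{q_1}F)$.

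No step is technically difficult; the content of the statement is just the linear change of variable $(v_1,v_2)\mapsto(v_1+v_2,-v_2)$ (resp.\ its role in the adjoint), packaged through the six-functor formalism. The only point that needs checking is that all the formal manipulations with $\Phi$ (compatibility of $\opb{\Phi}$ with $\ltens$ and $\rhom$, the identity $\reim{\Phi}\opb{\Phi}\simeq\id$, and the functorialities $\reim{f\circ g}=\reim{f}\reim{g}$, etc.) apply to a diffeomorphism, which is standard.
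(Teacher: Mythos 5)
Your proof is correct and rests on exactly the same device as the paper's: the involution $(x,v_1,v_2)\mapsto(x,v_1+v_2,-v_2)$ of $E\times_M E$ together with the relations $\Phi\circ\Phi=\id$, $s\circ\Phi=q_1$, $q_1\circ\Phi=s$ and $q_2\circ\Phi=i\circ q_2$. The only (harmless) difference is one of bookkeeping: the paper computes the $\rhomc$ formula directly by conjugating with $\Phi$ and declares the $\star$ formula ``similar'', whereas you compute the $\star$ formula directly and recover the $\rhomc$ one from it by the adjunction~\eqref{eq:adjstarhomc} and the Yoneda lemma.
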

\begin{proof}
We only prove the first isomorphism, the second one being similar.
We set $f\eqdot (s,-q_2)\cl E\times_M E \to E\times_M E$, $(x,v_1,v_2)
\mapsto (x, v_1+v_2,-v_2)$.
We find $f\circ f =\id$, $s = q_1 \circ f$, $q_2 \circ f = i\circ q_2$.
Since $f$ is an isomorphism $\rhom$ commutes with $\opb{f} \simeq \epb{f}$.
Since $f \circ f =\id$ we have $\opb{f} = f_*$. We deduce the isomorphisms:
\eqn
\begin{aligned}
 \rhomc(G,F)
&\simeq \roim{q_1}\rhom(\opb{q_2}G,\epb{s}F)  \\
&\simeq \roim{q_1}\rhom(\opb{f}\opb{q_2}\opb{i}G, \epb{f}\epb{q_1}F)  \\
&\simeq \roim{q_1}\opb{f}\rhom(\opb{q_2}\opb{i}G, \epb{q_1}F)  \\
&\simeq \roim{s}\rhom(\opb{q_2}\opb{i}G, \epb{q_1}F).
\end{aligned}
\eneqn
\end{proof}
\begin{proposition}\label{pro:convconv}
For $F_1,F_2,F_3\in\Derb(\cor_{E})$ we have
\eq\label{eq:convconv1}
&\ba{rcl}
(F_1\star F_2)\star F_3 &\simeq& F_1\star(F_2\star F_3),\\
\rhomc(F_1\star F_2,F_3) &\simeq& \rhomc(F_1,\rhomc(F_2,F_3)).
\ea
\eneq
\end{proposition}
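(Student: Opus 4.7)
The plan is to prove the associativity (a) by showing that both $(F_1\star F_2)\star F_3$ and $F_1\star (F_2\star F_3)$ are canonically isomorphic to the common ``triple convolution''
\[
F_1\star F_2\star F_3 \eqdot \reim{s_3}(\opb{p_1}F_1\ltens\opb{p_2}F_2\ltens\opb{p_3}F_3),
\]
where $E^{(3)}\eqdot E\times_M E\times_M E$ has coordinates $(x,v_1,v_2,v_3)$, $s_3\colon E^{(3)}\to E$ is the triple sum $(x,v_1,v_2,v_3)\mapsto(x,v_1+v_2+v_3)$, and $p_i\colon E^{(3)}\to E$, $(x,v_1,v_2,v_3)\mapsto (x,v_i)$ are the three factor projections. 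The second identity (b) will then follow formally from (a) via the adjunction~\eqref{eq:adjstarhomc} and the Yoneda lemma.

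For (a), introduce the partial-sum map $\sigma\colon E^{(3)}\to E\times_M E$, $(x,v_1,v_2,v_3)\mapsto (x,v_1+v_2,v_3)$, and the projection $\beta\colon E^{(3)}\to E\times_M E$ onto the first two fiber coordinates, $(x,v_1,v_2,v_3)\mapsto (x,v_1,v_2)$. One has $s\circ\beta=q_1\circ\sigma$, and this commutative square exhibits $E^{(3)}$ as the fiber product $(E\times_M E)\times_{E,q_1,s}(E\times_M E)$ with $\beta$ and $\sigma$ as the two projections. Proper base change then gives $\opb{q_1}\reim{s}\simeq \reim{\sigma}\opb{\beta}$, and using $q_1\circ\beta=p_1$, $q_2\circ\beta=p_2$, one obtains
\[
\opb{q_1}(F_1\star F_2) \simeq \reim{\sigma}(\opb{p_1}F_1\ltens\opb{p_2}F_2).
\]
Now the projection formula $\reim{\sigma}(A)\ltens B\simeq \reim{\sigma}(A\ltens\opb{\sigma}B)$ applied with $B=\opb{q_2}F_3$, together with $\opb{\sigma}\opb{q_2}F_3=\opb{p_3}F_3$ (from $q_2\circ\sigma=p_3$) and $s\circ\sigma=s_3$, yields
\[
(F_1\star F_2)\star F_3 \simeq \reim{s_3}(\opb{p_1}F_1\ltens\opb{p_2}F_2\ltens\opb{p_3}F_3).
\]
The symmetric construction, with $\sigma'\colon (x,v_1,v_2,v_3)\mapsto (x,v_1,v_2+v_3)$ and the projection onto the last two fiber coordinates, produces the same expression for $F_1\star(F_2\star F_3)$, establishing (a).

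For (b), for any $G\in\Derb(\cor_E)$ the adjunction~\eqref{eq:adjstarhomc} and part (a) give the chain of functorial isomorphisms
\[
\begin{array}{rcl}
\RHom(G,\rhomc(F_1\star F_2,F_3))
 &\simeq& \RHom(G\star(F_1\star F_2),F_3) \\
 &\simeq& \RHom((G\star F_1)\star F_2,F_3) \\
 &\simeq& \RHom(G\star F_1,\rhomc(F_2,F_3)) \\
 &\simeq& \RHom(G,\rhomc(F_1,\rhomc(F_2,F_3))),
\end{array}
\]
and the Yoneda lemma in $\Derb(\cor_E)$ concludes. The only genuine obstacle is the bookkeeping in (a)---verifying the simplicial identities among $s_3$, $\sigma$, $\beta$, $p_i$, $q_i$ and identifying $E^{(3)}$ with the correct fiber product---after which the argument is a standard invocation of the $\reim{}$--$\opb{}$ and projection-formula calculus.
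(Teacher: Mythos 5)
Your proof is correct and follows essentially the same route as the paper: both arguments identify each side of the associativity isomorphism with the triple convolution $\reim{s_3}(\opb{p_1}F_1\ltens\opb{p_2}F_2\ltens\opb{p_3}F_3)$ (you merely spell out the base-change and projection-formula steps that the paper leaves as "we check easily"), and both deduce the second identity from the first via the adjunction~\eqref{eq:adjstarhomc} and the Yoneda lemma.
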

\begin{proof}
(i) The first isomorphism is proved in the same way  as the associativity of the
composition of kernels: we check easily that both sides are isomorphic to
$\reim{\sigma}(\opb{q_1}(F_1) \ltens \opb{q_2}(F_2) \ltens \opb{q_3}(F_3))$
where $\sigma \cl M\times V^3 \to M\times V$ is given by
$\sigma(x,v_1,v_2,v_3) = (x,v_1+v_2+v_3)$ and $q_i\cl M\times V^3 \to M\times V$
is the projection on the $i^{th}$ factor $V$.

\medskip\noindent
(ii) We use the Yoneda embedding to prove the second isomorphism.
We apply the functor
$\Hom[\Derb(\cor_E)](H,\scbul)$ for any $H\in\Derb(\cor_E)$ to each term of
this formula. One gets an isomorphism in view of the adjunction
isomorphism~\eqref{eq:adjstarhomc} and the associativity of $\star$
proved in~(i).
\end{proof}

\begin{proposition}\label{prop:hom-homc}
Let $q\cl E\to M$ and $q'\cl M\times V \times V \to M$ be the projections.
For $F,G,H \in\Derb(\cor_{E})$ we have
\eq
\label{eq:hom-homc}
&\roim{q}(\rhom(F,\rhomc(G,H))) \simeq \roim{q} (\rhom(F\star G,H)) , \\
\label{eq:tens-star}
&\ba{rcl}
\reim{q}((F\star G) \ltens H)
&\simeq& \reim{q'}( \opb{q_1} F \ltens \opb{q_2} G  \ltens \opb{s} H) \\
&\simeq& \reim{q}(F \ltens  (\opb{i}G\star H)).
\ea
\eneq
\end{proposition}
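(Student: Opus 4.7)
The plan is to prove each isomorphism by a sequence of standard six-functor manipulations (adjunctions, projection formula) together with one change of variable, in the spirit of the proof of Lemma~\ref{lem:convhom2}.

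For~\eqref{eq:hom-homc}, I would first unfold $\rhomc(G,H) = \roim{q_1}\rhom(\opb{q_2}G,\epb{s}H)$ and then apply the adjunction $(\opb{q_1},\roim{q_1})$ together with the tensor-hom adjunction to obtain
\[
\rhom(F,\rhomc(G,H)) \simeq \roim{q_1}\rhom(\opb{q_1}F\ltens\opb{q_2}G,\epb{s}H).
\]
Applying $\roim{q}$ and using $q\circ q_1 = q'$ rewrites the left-hand side of~\eqref{eq:hom-homc} as $\roim{q'}\rhom(\opb{q_1}F\ltens\opb{q_2}G,\epb{s}H)$. On the other side, the adjunction $(\reim{s},\epb{s})$ gives $\rhom(F\star G,H) \simeq \roim{s}\rhom(\opb{q_1}F\ltens\opb{q_2}G,\epb{s}H)$, and the identity $q\circ s = q'$ yields the same final expression, proving~\eqref{eq:hom-homc}.

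For the first isomorphism in~\eqref{eq:tens-star}, the projection formula applied to $\reim{s}$ gives $(F\star G)\ltens H \simeq \reim{s}(\opb{q_1}F\ltens\opb{q_2}G\ltens\opb{s}H)$; applying $\reim{q}$ and using $q\circ s = q'$ yields the middle term directly. For the second isomorphism, the key step is to introduce the involution
\[
f\cl M\times V\times V \to M\times V\times V, \qquad (x,v_1,v_2)\mapsto(x,v_1,-v_1-v_2),
\]
analogous to the change of variable used in the proof of Lemma~\ref{lem:convhom2}. One checks $f\circ f = \id$ and the compositional identities $q'\circ f = q'$, $q_1\circ f = q_1$, $s\circ f = i\circ q_2$, and $q_2\circ f = i\circ s$. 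Since $f$ is an isomorphism we have $\opb{f}\simeq\reim{f}$, so applying $\opb{f}$ inside $\reim{q'}$ transforms $\opb{q_1}F\ltens\opb{q_2}G\ltens\opb{s}H$ into $\opb{q_1}F\ltens\opb{s}\opb{i}G\ltens\opb{q_2}\opb{i}H$ without changing $\reim{q'}$ of it.

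It then suffices to identify this last expression with $\reim{q}(F\ltens(\opb{i}G\star H))$. Using Lemma~\ref{lem:convhom2} applied to the pair $(\opb{i}G,H)$, we obtain $\opb{i}G\star H \simeq \reim{q_1}(\opb{s}\opb{i}G\ltens\opb{q_2}\opb{i}H)$; the projection formula for $\reim{q_1}$ then gives $F\ltens(\opb{i}G\star H)\simeq \reim{q_1}(\opb{q_1}F\ltens\opb{s}\opb{i}G\ltens\opb{q_2}\opb{i}H)$, and composing with $\reim{q}$ together with $q\circ q_1 = q'$ finishes the proof. The only real obstacle is bookkeeping: making sure the compositional identities involving $s$, $q_1$, $q_2$, $i$ and $f$ are correct, but no genuinely new geometric input is needed beyond the trick already exploited in Lemma~\ref{lem:convhom2}.
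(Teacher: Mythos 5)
Your proof is correct and follows essentially the same route as the paper's (very terse) proof: adjunction between $\reim{s}$ and $\epb{s}$ plus the identity $q\circ q_1=q\circ s=q'$ for \eqref{eq:hom-homc}, and the projection formula together with Lemma~\ref{lem:convhom2} for \eqref{eq:tens-star}. The explicit involution $(x,v_1,v_2)\mapsto(x,v_1,-v_1-v_2)$ you introduce is just the detail the paper leaves implicit in its appeal to Lemma~\ref{lem:convhom2}, and all your compositional identities ($q_1\circ f=q_1$, $s\circ f=i\circ q_2$, $q_2\circ f=i\circ s$) check out.
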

\begin{proof}
The first isomorphism follows by adjunction from~\eqref{eq:convtens}
and~\eqref{eq:convhom}, using $q\circ q_1 = q \circ s$.
The second and third ones follow from the projection formula, the identities
$q\circ q_1 = q' = q \circ s$ and Lemma~\ref{lem:convhom2}.
\end{proof}
Recall that the involution $(\scbul)^\alpha$ is defined
in~\eqref{eq:involalpha}.
\begin{proposition}\label{pro:SSconv}
For $F,G\in\Derb(\cor_{E})$ we have
\eq\label{eq:SSstar1}
&&\ba{rcl}
&&\SSi(F\star G)\subset\SSi(F)\hatstar \SSi(G),\\
&&\SSi(\rhomc(G,F))\subset \SSi(F)\hatstar \SSi(G)^\alpha .
\ea
\eneq
\end{proposition}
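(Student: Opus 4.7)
The plan is to derive both microsupport bounds by a common three-step chain along an explicit factorisation of $F\star G$ or $\rhomc(G,F)$: a K\"unneth estimate on $E\times E$, a pullback estimate via Theorem~\ref{th:opboim2}, and a direct-image estimate via Theorem~\ref{th:nonpropdirim}. The key geometric point is that $s\cl E\times_M E\to E$ is a constant linear map of trivial vector bundles over $M$, so the non-proper direct-image estimate of Theorem~\ref{th:nonpropdirim} applies to it and takes care of the non-properness of $s$.

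\textbf{First bound.} Starting from $F\star G=\reim{s}\opb{\delta_M}(F\letens G)$, I apply three estimates in sequence. Theorem~\ref{th:opboim}(i) gives $\SSi(F\letens G)\subset\SSi(F)\times\SSi(G)$. Since $\delta_M\cl E\times_M E\hookrightarrow E\times E$ is a closed embedding, Theorem~\ref{th:opboim2} yields $\SSi(\opb{\delta_M}(F\letens G))\subset\delta_M^\sharp(\SSi(F)\times\SSi(G))$. Finally, Theorem~\ref{th:nonpropdirim} applied to $s$ gives $\SSi(F\star G)\subset s_\sharp\delta_M^\sharp(\SSi(F)\times\SSi(G))=\SSi(F)\hatstar\SSi(G)$, the last equality being Notation~\ref{not:starhat}.

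\textbf{Second bound.} I first use Lemma~\ref{lem:convhom2} to write $\rhomc(G,F)\simeq\roim{s}\rhom(\opb{q_2}\opb{i}G,\epb{q_1}F)$. Denoting by $r_1,r_2\cl E\times E\to E$ the two projections, so that $q_j=r_j\circ\delta_M$, and using the standard identity $\rhom(\opb{\delta_M}A,\epb{\delta_M}B)\simeq\epb{\delta_M}\rhom(A,B)$, this becomes $\roim{s}\epb{\delta_M}\rhom(\opb{r_2}\opb{i}G,\epb{r_1}F)$. The same three-step chain now yields the bound: Theorem~\ref{th:opboim}(i) (together with the fact that the relative dualizing complex $\omega_{r_1}$ has microsupport in the zero section, so $\SSi(\epb{r_1}F)=\SSi(\opb{r_1}F)$) gives $\SSi(\rhom(\opb{r_2}\opb{i}G,\epb{r_1}F))\subset\SSi(F)\times\SSi(\opb{i}G)^a$; Theorem~\ref{th:opboim2} for $\epb{\delta_M}$ then produces the $\delta_M^\sharp$-bound; and Theorem~\ref{th:nonpropdirim} for $\roim{s}$ delivers $\SSi(\rhomc(G,F))\subset\SSi(F)\hatstar\SSi(\opb{i}G)^a$. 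To conclude I identify $\SSi(\opb{i}G)^a$ with $\SSi(G)^\alpha$: the diffeomorphism $i(x,v)=(x,-v)$ induces on $T^*E$ the map $(x,v;\xi,\eta)\mapsto(x,-v;\xi,-\eta)$, under which $\SSi(\opb{i}G)$ is the image of $\SSi(G)$; composing with the antipode $a$ recovers exactly the involution $\alpha$ of~\eqref{eq:involalpha}.

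\textbf{Main obstacle.} Nothing analytically delicate arises: all three microsupport operations are black-boxed from the preceding sections, and the non-properness of $s$ is precisely what Theorem~\ref{th:nonpropdirim} handles. The only real hurdle is bookkeeping---tracking the compositions of the involutions $i$, $a$, $\alpha$ on $T^*E$ and ensuring the adjunction $\rhom(\opb{\delta_M}A,\epb{\delta_M}B)\simeq\epb{\delta_M}\rhom(A,B)$ is applied to the correct arguments of $\rhom$ after factoring the projections $q_j$ through $\delta_M$.
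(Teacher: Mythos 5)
Your proposal is correct and follows essentially the same route as the paper: the paper's proof is a one-line reference to the same ingredients (the factorizations $F\star G\simeq\reim{s}\opb{\delta_M}(F\letens G)$ and Lemma~\ref{lem:convhom2}, the definition~\eqref{eq:convsets} of $\hatstar$, Theorems~\ref{th:opboim2} and~\ref{th:nonpropdirim}, and the identity $\SSi(\opb{i}G)^a=\SSi(G)^\alpha$), which you simply spell out in detail. Your extra step of factoring $\rhom(\opb{q_2}\opb{i}G,\epb{q_1}F)$ through $\epb{\delta_M}$ so that the bound lands exactly on $\delta_M^{\sharp}(\SSi(F)\times\SSi(\opb{i}G)^a)$ is a sensible and correct way to make the terse argument precise.
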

\begin{proof}
Both inclusions in~\eqref{eq:SSstar1} follow from~\eqref{eq:convtens},
\eqref{eq:convsets} and Theorems~\ref{th:opboim2} and~\ref{th:nonpropdirim}.
For the second one we also use Lemma~\ref{lem:convhom2}
and $\SSi(\opb{i}G)^a = \SSi(G)^\alpha$.
\end{proof}
Using~\eqref{eq:SSstar1} and~\eqref{eq:easy-bound-hatstar}, we get:
\begin{corollary}
Let $F,G\in\Derb(\cor_{E})$ and assume that 
there exist closed cones $A', B'\subset V^*$
such that $\SSi(F) \subset T^*M \times V \times A'$
and $\SSi(G) \subset T^*M \times V \times B'$. Then
\eq\label{eq:SSstar1b}
&&\ba{rcl}
&&\SSi(F\star G)\subset   T^*M \times V \times (A' \cap B'), \\
&&\SSi(\rhomc(G,F))\subset  T^*M \times V \times (A' \cap B').
\ea
\eneq
\end{corollary}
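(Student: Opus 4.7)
The plan is to reduce the statement directly to the two cited results. First I set $A = T^*M\times V\times A'$ and $B = T^*M\times V\times B'$, so that by hypothesis $\SSi(F)\subset A$ and $\SSi(G)\subset B$.

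The only small technical point is the monotonicity of $\hatstar$: if $A_1\subset A_2$ and $B_1\subset B_2$, then $A_1\hatstar B_1 \subset A_2\hatstar B_2$. This is immediate from Notation~\ref{not:starhat} because both $\delta_M^\sharp$ (defined via the Whitney normal cone in~\eqref{eq:sharp}, which is monotone in its second argument) and $s_\sharp$ (defined via closure of a direct image composed with a preimage in~\eqref{not:udag2}) preserve inclusions. Granting this, Proposition~\ref{pro:SSconv} together with Lemma~\ref{le:boundforhatstar} yields
\[
\SSi(F\star G) \subset \SSi(F)\hatstar\SSi(G) \subset A\hatstar B = A\cap B = T^*M\times V\times (A'\cap B'),
\]
which is the first assertion.

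For the second assertion, I observe that the involution $\alpha$ of $T^*E$ given in~\eqref{eq:involalpha} acts on the $V^*$-factor as the identity (it only changes signs on the $V$ and $T^*M$ components). Since $A' \subset V^*$ and since the whole of $T^*M \times V$ is $\alpha$-stable, we have $B^\alpha = B$. Applying again Proposition~\ref{pro:SSconv} and Lemma~\ref{le:boundforhatstar}, together with monotonicity of $\hatstar$, gives
\[
\SSi(\rhomc(G,F)) \subset \SSi(F)\hatstar\SSi(G)^\alpha \subset A\hatstar B = T^*M\times V\times (A'\cap B'),
\]
which concludes the proof. There is no genuine obstacle here; the content is entirely packaged in the two earlier results, and the only thing to notice is the $\alpha$-invariance of sets of the form $T^*M\times V\times B'$.
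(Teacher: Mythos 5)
Your proof is correct and follows exactly the route the paper takes: the corollary is an immediate consequence of Proposition~\ref{pro:SSconv} combined with Lemma~\ref{le:boundforhatstar}. You merely make explicit two points the paper leaves implicit — the monotonicity of $\hatstar$ and the $\alpha$-invariance of sets of the form $T^*M\times V\times B'$ — and both of these observations are accurate.
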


\begin{corollary}\label{cor:restr-rhomc}
Let $F,G\in\Derb(\cor_{E})$ and assume that there exist closed strict
$\gamma$-cones $A$ and $B$ in $T^*M \times V^*$ such that 
$\SSi(F) \subset \opb{\piw}_E(A)$ and $\SSi(G) \subset \opb{\piw}_E(B)$.
Let $N$ be a submanifold of $M$ and $j\cl N\times V \to M\times V$ the
inclusion. Then
$$
\opb{j}\rhomc(F,G) \simeq \rhomc(\opb{j}F, \opb{j}G).
$$
\end{corollary}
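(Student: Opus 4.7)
My plan is to commute $\opb{j}$ past each building block of the definition
\[
\rhomc(F,G)=\roim{q_1^M}\rhom\bigl(\opb{q_2^M}F,\,\epb{s^M}G\bigr)
\]
from~\eqref{eq:convhom}, using the Cartesian squares (for $q_1$ and for $s$)
\[
\xymatrix@R=3ex{
N\times V\times V \ar[r]^-{J}\ar[d]_-{q_1^N,\,s^N}
 & M\times V\times V\ar[d]^-{q_1^M,\,s^M}\\
N\times V\ar[r]^-{j} & M\times V
}
\]
with $J=j\times\id_{V\times V}$.

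First I collect the non-characteristic inputs. By Remark~\ref{rem:strictgc}, $j$ is non-characteristic for $\SSi(F)$ and $\SSi(G)$. Computing $\SSi(\opb{q_2^M}F)$ and $\SSi(\epb{s^M}G)$ via Theorem~\ref{th:opboim}~(iii,iv) and invoking the $\hatp$-estimate of Theorem~\ref{th:opboim2}, I check that $J$ is non-characteristic for $\SSi(\opb{q_2^M}F)$, $\SSi(\epb{s^M}G)$, and $\SSi(H)$, where $H\eqdot\rhom(\opb{q_2^M}F,\epb{s^M}G)$: in each case, whenever both $V^*$-components of a point in the microsupport vanish, the strict $\gamma$-cone hypothesis forces the $T^*M$-component to vanish as well, so the intersection with the conormal bundle of $N\times V^2$ in $M\times V^2$ reduces to the zero section.

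Given these non-characteristic properties, commuting $\opb{J}$ past the internal Hom is routine: combine the always-true identity $\epb{J}\rhom(A,B)\simeq\rhom(\opb{J}A,\epb{J}B)$ with Theorem~\ref{th:opboim}~(iii), which replaces $\epb{J}$ by $\opb{J}$ tensored with the invertible $\omega_{J}$, then cancel $\omega_{J}$ and use the Cartesian compatibilities $\opb{J}\opb{q_2^M}=\opb{q_2^N}\opb{j}$ together with $\opb{J}\epb{s^M}\simeq\epb{s^N}\opb{j}$ (the latter from the smoothness of $s$). This yields $\opb{J}H\simeq\rhom(\opb{q_2^N}\opb{j}F,\,\epb{s^N}\opb{j}G)$, which is exactly the sheaf whose $\roim{q_1^N}$-image equals $\rhomc(\opb{j}F,\opb{j}G)$.

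The main obstacle is the final step, commuting $\opb{j}$ past $\roim{q_1^M}$ on $H$. My plan is to view $q_1^M\cl(M\times V)\times V\to M\times V$ as a constant linear projection between trivial vector bundles over the base $M\times V$, so that Corollary~\ref{cor:easyestimatproperm} bounds $\SSi(\roim{q_1^M}H)$ by the $\opb{\piw}$ of a strict $\gamma$-cone; in particular $j$ is non-characteristic for $\roim{q_1^M}H$. To promote this to the base change isomorphism, I exhaust $V$ by relatively compact opens $W_n$, set $U_n\eqdot M\times V\times W_n$, and apply proper base change to each $\roim{q_1^M}\rsect_{U_n}(H)\simeq\reim{q_1^M}\rsect_{U_n}(H)$ (valid since $q_1^M$ is proper on $\supp(\rsect_{U_n}H)$); assembling the pieces along the distinguished triangle of Lemma~\ref{lem:indprolim} for both $H$ on $M\times V^2$ and $\opb{J}H$ on $N\times V^2$, and using that $\roim{q_1}$ commutes with the homotopy product appearing there, reduces the problem to showing that $\opb{j}$ commutes with that homotopy product. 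This last point is the delicate step, and I expect it to follow from the uniform microsupport control ensured by the strict $\gamma$-cone bound.
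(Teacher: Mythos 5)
Your treatment of the internal Hom is fine and matches the paper's, but the final step --- commuting $\opb{j}$ past $\roim{q_1^M}$ --- is a genuine gap, and you essentially admit it. Your exhaustion argument reduces the problem to commuting $\opb{j}$ with the homotopy product $\prod_n$ appearing in the second triangle of Lemma~\ref{lem:indprolim}; but inverse image does \emph{not} commute with countable products in general, and no microsupport estimate on the individual terms will rescue this, since the obstruction lives in the limit, not in the pieces. (Microsupport bounds control the product only through the closure $\ol{\bigcup_n \SSi(G_n)}$ of Lemma~\ref{lem:SSreimroim}, which says nothing about base change.) So as written the proof does not close.

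The missing idea is to run the whole computation with $\epb{j}$ instead of $\opb{j}$. You already observe that $j$ is non-characteristic for $F$, for $G$, \emph{and} for $\roim{q_1^M}H=\rhomc(F,G)$ (the paper gets the last point from Proposition~\ref{pro:SSconv} and Lemma~\ref{lem:star-gamma-cones}, which show $\SSi(\rhomc(F,G))\subset \opb{\piw}_E(A\hplus[M]B)$ with $A\hplus[M]B$ a strict $\gamma$-cone). Hence by Theorem~\ref{th:opboim}~(iii), $\epb{j}\simeq \opb{j}\otimes\omega_{N\times V|M\times V}$ on all three objects, and you may trade $\opb{j}$ for $\epb{j}$ at both ends. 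The point is that $\epb{j}\roim{q_1^M}\simeq\roim{q_1^N}\epb{j'}$ holds \emph{unconditionally} for the Cartesian square, being the right adjoint of proper base change $\opb{j}\reim{q_1^M}\simeq\reim{q_1^N}\opb{j'}$; this eliminates the delicate step entirely, with no exhaustion needed. Combined with $\epb{j'}\rhom(\cdot,\cdot)\simeq\rhom(\opb{j'}\cdot,\epb{j'}\cdot)$ and the compatibilities $\opb{j'}\opb{q_2}=\opb{q_2}\opb{j}$, $\epb{j'}\epb{s}=\epb{s}\epb{j}$, the dualizing twists cancel and the corollary follows. I recommend you restructure the proof along these lines rather than trying to repair the product argument.
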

\begin{proof}
By Proposition~\ref{pro:SSconv} and Lemma~\ref{lem:star-gamma-cones},
$\SSi(\rhomc(F,G)) \subset \opb{\piw}_E(A\hplus[M]B)$ and $A\hplus[M]B$ is a
strict $\gamma$-cone.
By Remark~\ref{rem:strictgc}, we deduce
$\epb{j}H \simeq \opb{j}H \otimes \omega_{N\times V|M\times V}$
for $H=F,G$ or $\rhomc(F,G)$.
This gives the first and last steps in the sequence of isomorphisms,
where we set  $j'=j\times \id_V$:
\begin{align*}
\opb{j}\rhomc(F,G)
&\simeq \epb{j}\roim{q_1}\rhom(\opb{q_2}F,\epb{s}G)
  \otimes \omega_{N\times V|M\times V}^{\otimes-1} \\
&\simeq \roim{q_1}\epb{j'}\rhom(\opb{q_2}F,\epb{s}G)
  \otimes \omega_{N\times V|M\times V}^{\otimes-1} \\
&\simeq \roim{q_1}\rhom(\opb{j'}\opb{q_2}F,\epb{j'}\epb{s}G)
  \otimes \omega_{N\times V|M\times V}^{\otimes-1} \\
&\simeq \roim{q_1}\rhom(\opb{q_2}\opb{j}F,\epb{s}\epb{j}G)
  \otimes \omega_{N\times V|M\times V}^{\otimes-1} \\
&\simeq \rhomc(\opb{j}F, \opb{j}G).
\end{align*}
\end{proof}

\subsubsection*{Kernels associated with cones}
Recall that we consider a trivial vector bundle $E=M\times V$
and  a trivial cone $\gamma=M\times\gamma_0$ satisfying~\eqref{eq:hypcone}.
For another proper closed convex cone $\lambda_0\subset V$  such that
$\lambda_0 \subset \gamma_0$, setting $\lambda=M\times\lambda_0$,
 we shall use the exact sequence of sheaves:
\eq\label{eq:exscone}
&&0\to\cor_{\gamma\setminus \lambda}\to \cor_\gamma\to\cor_\lambda \to 0.
\eneq
\begin{lemma}\label{le:sscone} 
Let $\lambda_0 \subset \gamma_0$ be closed convex proper cones. Then
\eqn
&&\SSi(\cor_\gamma)\subset T^*_MM\times V\times\gamma_0^\circ,\\
&&\SSi(\cor_{\gamma\setminus \lambda})\subset 
T^*_MM\times V\times( \lambda_0^\circ  \setminus\Int(\gamma_0^\circ)).
\eneqn
\end{lemma}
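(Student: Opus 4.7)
\medskip

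\noindent
\textbf{Proof plan.}
Since $\gamma = M\times\gamma_0$, we have $\cor_\gamma \simeq \cor_M\etens\cor_{\gamma_0}$ and $\cor_{\gamma\setminus\lambda} \simeq \cor_M\etens\cor_{\gamma_0\setminus\lambda_0}$, so by Theorem~\ref{th:opboim}~(i) both assertions reduce to the analogous bounds on $\SSi(\cor_{\gamma_0})$ and $\SSi(\cor_{\gamma_0\setminus\lambda_0})$ in $T^*V$. The first inclusion is then the classical computation for the constant sheaf on a closed convex proper cone \lp cf.~\cite[Prop.~5.3.2]{KS90}\rp, which in fact gives the sharper description
$\SSi(\cor_{\gamma_0}) \subset \set{(v;\eta)\in V\times V^*}{v\in\gamma_0,\; \eta\in\gamma_0^\circ,\; \langle\eta,v\rangle = 0}$,
and the same statement with $\lambda_0$ in place of $\gamma_0$.

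For the second inclusion, the microsupport triangle inequality applied to~\eqref{eq:exscone}, together with $\lambda_0\subset\gamma_0$ \lp hence $\gamma_0^\circ\subset\lambda_0^\circ$\rp, already yields
$\SSi(\cor_{\gamma_0\setminus\lambda_0})\subset V\times\lambda_0^\circ$. It remains to exclude the directions $\eta_0\in\Int\gamma_0^\circ$. Fix such an $\eta_0$; by definition $\langle\eta_0,v\rangle > 0$ for every $v\in\gamma_0\setminus\{0\}$. If the base point $v_0$ is nonzero, then either $v_0\notin\gamma_0$ \lp in which case both $\cor_{\gamma_0}$ and $\cor_{\lambda_0}$ vanish near $v_0$\rp, or $v_0\in\gamma_0\setminus\{0\}$; in the latter case the sharper form above, combined with $\langle\eta_0,v_0\rangle\neq 0$, shows that $(v_0;\eta_0)\notin\SSi(\cor_{\gamma_0})\cup\SSi(\cor_{\lambda_0})$, and the triangle inequality concludes.

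The remaining case $v_0 = 0$ is the main technical hurdle and requires a direct Morse computation. Setting $\phi(w) = \langle\eta_0,w\rangle$, I would reduce the vanishing of $(\rsect_{\{\phi\geq 0\}}\cor_{\gamma_0\setminus\lambda_0})_0$, via the distinguished triangle coming from~\eqref{eq:exscone}, to showing that the natural map
$(\rsect_{\{\phi\geq 0\}}\cor_{\gamma_0})_0 \to (\rsect_{\{\phi\geq 0\}}\cor_{\lambda_0})_0$
is an isomorphism. For a small open ball $B$ around $0$, the assumption $\eta_0\in\Int\gamma_0^\circ$ gives $\phi > 0$ on $\gamma_0\setminus\{0\}$ and a fortiori on $\lambda_0\setminus\{0\}$, so $B\cap\{\phi < 0\}\cap\gamma_0 = B\cap\{\phi < 0\}\cap\lambda_0 = \emptyset$. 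The excision triangle
$\rsect_{\{\phi\geq 0\}\cap B}(B;F) \to \rsect(B;F) \to \rsect(B\setminus\{\phi\geq 0\};F) \to[+1]$,
combined with the contractibility of $B\cap\gamma_0$ and $B\cap\lambda_0$, then identifies both sides of the comparison map with $\cor$ in degree zero; the map itself is the identity because it is induced by the restriction $\cor_{\gamma_0}\to\cor_{\lambda_0}$ of the canonical global section $1$. Passing to the colimit over $B\ni 0$ gives the desired vanishing, and this completes the proof.
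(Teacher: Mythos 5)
Your argument is correct in substance, but for the second inclusion it takes a genuinely different route from the paper's. The paper also reduces to $M=\rmpt$ via the external product, but then exploits the fact that $\cor_{\gamma_0}$ and $\cor_{\gamma_0\setminus\lambda_0}$ are conic sheaves: their microsupports are biconic, so everything is determined by the fibre over the origin, and that fibre is computed in one stroke as $\supp\bigl((\cor_{\gamma_0\setminus\lambda_0})^\wedge\bigr)$ using the Fourier--Sato transform, the identity $(\cor_{\gamma_0})^\wedge\simeq\cor_{\Int\gamma_0^\circ}$ and \cite[Prop.~5.5.5]{KS90}. That approach yields the fibre at $0$ exactly, with no case analysis and no need for the refined description of $\SSi(\cor_{\gamma_0})$. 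You instead combine the triangle inequality with the precise ``orthogonality'' form of $\SSi(\cor_{\gamma_0})$ for a closed convex cone at nonzero base points, and do a direct Morse computation at the origin. This is more elementary (no Fourier--Sato transform) and makes visible where each direction is excluded, at the price of a longer argument and of requiring the sharper statement on $\SSi$ of a convex cone as input (it is indeed in \cite{KS90}, though you should double-check the exact reference; it is not the same proposition the paper cites for the first inclusion).

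One step needs tightening. By the definition of the microsupport, to conclude $(0;\eta_0)\notin\SSi(\cor_{\gamma_0\setminus\lambda_0})$ you must verify $(\rsect_{\{\phi\geq0\}}(\scbul))_{x_0}\simeq0$ for \emph{all} $C^1$ functions $\phi$ with $\phi(x_0)=0$ and $d\phi(x_0)$ in a whole neighbourhood of $\eta_0$, at all base points $x_0$ near $0$ --- not only for the single linear function $\langle\eta_0,\cdot\rangle$ at $x_0=0$. The points $x_0\neq0$ are covered by your first case, and for $x_0=0$ the computation does extend: since $\eta_0\in\Int\gamma_0^\circ$ one has $\langle\eta,w\rangle\geq c|w|$ on $\gamma_0$ for some $c>0$ and all $\eta$ close to $\eta_0$, so $\phi(w)=\langle d\phi(0),w\rangle+o(|w|)$ remains $>0$ on $(\gamma_0\setminus\{0\})\cap B$ for $B$ small, and the rest of your excision argument is unchanged. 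But as written this uniformity is not addressed, and it should be stated (or replaced by an appeal to one of the equivalent characterizations of $\SSi$ in \cite[Prop.~5.1.1]{KS90} that is stable under such perturbations).
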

\begin{proof}
Since our sheaves are inverse images of sheaves on $V$ we may as well
assume that $M$ is a point.  Since our sheaves are conic in the sense
of~\cite[\S 5.5]{KS90} their microsupports are biconic. Now, a closed
biconic subset $A$ of $V\times V^*$ satisfies
$A\subset V\times (A\cap \{0\}\times V^*)$.
Hence we only have to check the inclusions at the origin.

Then the first inclusion follows from~\cite[Prop.~5.3.1]{KS90}.

For the second one we use the Sato-Fourier transform
$(\cdot)^\wedge\cl \Derb_{\R^+}(\cor_V) \to \Derb_{\R^+}(\cor_{V^*})$
defined in~\cite[\S 3.7]{KS90} ($\Derb_{\R^+}(\cor_V)$ denotes the
subcategory of complexes with conic cohomology).
We have $(\cor_{\gamma_0})^\wedge \simeq \cor_{\Int \gamma_0^\circ}$
and we deduce the distinguished triangle
$$
(\cor_{\gamma_0\setminus \lambda_0})^\wedge \to 
\cor_{\Int \gamma_0^\circ} \to \cor_{\Int \lambda_0^\circ} \to[+1].
$$
Hence $(\cor_{\gamma_0\setminus \lambda_0})^\wedge \simeq
\cor_{\Int \lambda_0^\circ \setminus \Int \gamma_0^\circ} [-1]$
and we conclude with~\cite[Prop.~5.5.5]{KS90} which implies
$\SSi(F) \cap T^*_0V = \supp(F^\wedge)$ for $F\in  \Derb_{\R^+}(\cor_V)$.
\end{proof}
We introduce the kernel:
\eq\label{eq:Pgamma}
&&\LG\eqdot \cor_\gamma\star\cl \Derb(\cor_E)\to\Derb(\cor_E).
\eneq
The morphism $\cor_\gamma\to\cor_{\{0\}}$ 
induces a morphism of functors $\varepsilon \cl \LG\to\id_{ \Derb(\cor_E)}$. 
By~\eqref{eq:convconv1} we have $\LG\circ \LG\simeq \LG$. 
Hence, the pair $(\LG,\varepsilon)$ is a projector in
$\Derb(\cor_E)^\op$ in the sense of \cite[Chap.~5]{KS05}.
It will be convenient to write $\LG$ with the language of kernels as
in~\eqref{eq:conv00}.
We define $\gamma^+ \subset E\times E$ by
\eq\label{eq:gamma+}
&&\gamma^+ = \{ (x,v,x',v') \in E\times E; \; v-v' \in \gamma_0\}.
\eneq
Then 
\eq\label{eq:Pgamma-bis}
&&\LG \simeq \cor_{\gamma^+} \conv \cdot.
\eneq
In the sequel we set 
\eq\label{eq:UCgamma}
&&\ba{rcl}
&&U_\gamma\eqdot T^*M\times V\times \Int(\gamma_0^\circ),\label{eq:Ugamma}\\
&& Z_\gamma\eqdot T^*E\setminus U_\gamma.
\ea\eneq
\begin{proposition}\label{pro:conv2}
Let $F\in\Derb(\cor_E)$.
\bnum
\item
$\SSi(\LG F)\subset \ol{U_\gamma}=T^*M\times V\times\gamma_0^\circ$.
\item
Consider a distinguished triangle $\LG F\to F\to G\to[+1]$. Then 
$\SSi(G)\subset Z_\gamma$. In particular, 
$\SSi(\LG F)\subset  (T^*M\times V\times\partial\gamma_0^\circ)
\cup (\SSi(F)\cap U_\gamma)$.
\item
Let $G\in\Derb_{Z_\gamma}(\cor_E)$. 
Then $\roim{q}\rsect_\gamma(G)\simeq0$. In particular, $\rsect_\gamma(E;G)\simeq0$.
\enum
\end{proposition}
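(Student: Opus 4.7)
For (i), I would apply the microsupport estimate of Proposition~\ref{pro:SSconv} to obtain $\SSi(\LG F) = \SSi(\cor_\gamma \star F) \subset \SSi(\cor_\gamma) \hatstar \SSi(F)$. By Lemma~\ref{le:sscone} we have $\SSi(\cor_\gamma) \subset T^*_MM \times V \times \gamma_0^\circ$; unwinding the definition of $\hatstar = s_\sharp \delta_M^\sharp$, the $T^*M$-component of $\SSi(\cor_\gamma)$ is zero and the $V^*$-component lies in the closed cone $\gamma_0^\circ$. Since the operator $s_\sharp$ only forces the two $V^*$-coordinates of a point in $\delta_M^\sharp(\SSi(\cor_\gamma)\times\SSi(F))$ to coincide in the limit, the resulting $\eta$-coordinate is a limit of elements of the closed set $\gamma_0^\circ$, hence belongs to $\gamma_0^\circ$. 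This gives $\SSi(\LG F)\subset T^*M \times V \times \gamma_0^\circ = \ol{U_\gamma}$.

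For (ii), the key is the exact sequence~\eqref{eq:exscone} applied with $\lambda_0 = \{0\}$:
\[
\cor_{\gamma \setminus (M\times\{0\})} \to \cor_\gamma \to \cor_{M\times\{0\}} \to[+1].
\]
Convolving with $F$ and using $\cor_{M\times\{0\}} \star F \simeq F$, we obtain a distinguished triangle $\cor_{\gamma \setminus (M\times\{0\})} \star F \to \LG F \to F \to[+1]$; comparing with $\LG F \to F \to G \to[+1]$ identifies $G \simeq (\cor_{\gamma \setminus (M\times\{0\})} \star F)[1]$. Since $\{0\}^\circ = V^*$, Lemma~\ref{le:sscone} yields $\SSi(\cor_{\gamma \setminus (M\times\{0\})}) \subset T^*_MM \times V \times (V^* \setminus \Int \gamma_0^\circ)$. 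Repeating the $\hatstar$-unwinding from (i), the $\eta$-component of $\SSi(G)$ is a limit of elements of the closed set $V^* \setminus \Int \gamma_0^\circ$ and hence stays in that set, so $\SSi(G) \subset Z_\gamma$. For the ``in particular'': by the triangular inequality $\SSi(\LG F) \subset \SSi(F) \cup \SSi(G)$; intersecting with $U_\gamma$ gives $\SSi(\LG F)\cap U_\gamma \subset \SSi(F)\cap U_\gamma$, and intersecting (i) with $Z_\gamma$ gives $\SSi(\LG F)\cap Z_\gamma \subset T^*M \times V \times \partial\gamma_0^\circ$.

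For (iii), I first rewrite $\rsect_\gamma(G) \simeq \rhom(\cor_\gamma, G)$ and use the adjunction~\eqref{eq:hom-homc} with $F=\cor_{M\times\{0\}}$, $G=\cor_\gamma$, $H=G$, together with $\cor_{M\times\{0\}}\star\cor_\gamma\simeq\cor_\gamma$, to obtain
\[
\roim{q}\rsect_\gamma(G) \simeq \roim{q}\rsect_{M\times\{0\}}(\rhomc(\cor_\gamma, G)).
\]
By Proposition~\ref{pro:SSconv}, $\SSi(\rhomc(\cor_\gamma,G)) \subset \SSi(G)\hatstar\SSi(\cor_\gamma)^\alpha$; the intersection argument $\gamma_0^\circ \cap (V^*\setminus\Int\gamma_0^\circ) = \partial\gamma_0^\circ$ constrains this microsupport to $T^*M \times V \times \partial\gamma_0^\circ$. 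To conclude that this complex has vanishing restriction to $M\times\{0\}$, I would reduce to the fiber-wise statement: for $G_x = G|_{\{x\}\times V}$, $\rsect_{\gamma_0}(V; G_x) \simeq 0$.

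The main obstacle is proving this last fiber-wise vanishing. The natural candidate is microlocal Morse (Corollary~\ref{cor:Morse}) applied with the linear function $\phi(v) = \langle \eta_0, v\rangle$ for some $\eta_0 \in \Int\gamma_0^\circ$: then $d\phi = \eta_0$ lies in $\Int\gamma_0^\circ$ and so, away from the zero section, $d\phi \notin \SSi(G_x) \subset V\times(V^*\setminus\Int\gamma_0^\circ)$. The difficulty is that $\phi$ need not be proper on $\Supp(G_x)$, so one must combine Morse theory with a truncation/exhaustion argument (writing $V$ as an increasing union of sub-level sets $\{\phi<b\}$ and using Lemma~\ref{lem:indprolim} together with the microsupport control from Lemma~\ref{lem:SSreimroim}) in order to deduce that $\rsect(V;G_x)\isoto \rsect(V\setminus\gamma_0;G_x)$, which gives $\rsect_{\gamma_0}(V;G_x)\simeq 0$, hence $\roim{q}\rsect_\gamma(G)\simeq 0$.
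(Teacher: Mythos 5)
Parts (i) and (ii) are correct and coincide with the paper's argument: the paper simply cites~\eqref{eq:SSstar1b} and Lemma~\ref{le:sscone}, and for (ii) uses the same exact sequence~\eqref{eq:exscone} with $\lambda_0=\{0\}$ to identify $G$ with $\cor_{\gamma\setminus(M\times\{0\})}\star F$ up to shift; your hand unwinding of $\hatstar$ is just a re-proof of the special case~\eqref{eq:easy-bound-hatstar}.

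Part (iii) is where the genuine gaps are. Your detour through $L\eqdot\rhomc(\cor_\gamma,G)$ discards the one piece of information that makes the paper's proof work, namely a support condition: a microsupport bound of the form $\SSi(L)\subset T^*M\times V\times\partial\gamma_0^\circ$ cannot by itself force $\roim{q}\rsect_{M\times\{0\}}(L)$ to vanish (for $M=\rmpt$, $V=\R$, $\gamma_0=[0,+\infty[$ one has $\partial\gamma_0^\circ=\{0\}$, and the nonzero constant sheaf $L=\cor_V$ satisfies the bound while $\rsect_{\{0\}}(V;\cor_V)\simeq\cor[-1]\neq0$). The paper instead sets $H=\rsect_\gamma(G)$, which is supported in the \emph{proper cone} $\gamma$ and still has $\SSi(H)\subset Z_\gamma$; properness of $\gamma_0$ is what makes $\theta(x,v)=(x,\langle\xi,v\rangle)$, $\xi\in\Int\gamma_0^\circ$, proper on $\supp H$, and the combination $\supp\roim{\theta}H\subset\{t\geq0\}$, $\SSi(\roim{\theta}H)\subset\{\tau\leq0\}$ then kills $\roim{q}H$ outright. (Note also that the tempting shortcut $\rhomc(\cor_\gamma,G)\simeq0$ for $G\in\Derb_{Z_\gamma}(\cor_E)$ is the $\RG$-analogue of Proposition~\ref{pro:conv3}, whose proof uses the present proposition, so it cannot be invoked here.)

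Two further steps would fail as written. First, the reduction to fibers is not justified: $\roim{q}$ does not commute with restriction to $\{x\}\times V$, so the fiberwise vanishing $\rsect_{\gamma_0}(V;G_x)\simeq0$ does not yield $\roim{q}\rsect_\gamma(G)\simeq0$; one must control $\rsect(U\times V;\rsect_\gamma(G))$ for every open $U\subset M$, which is exactly what the paper does. Second, your Morse step is aimed at the wrong object: $V\setminus\gamma_0$ is not a sublevel set of $\phi=\langle\eta_0,\scbul\rangle$, so exhausting $V$ by $\{\phi<b\}$ and invoking Corollary~\ref{cor:Morse} compares $\rsect(\{\phi<b\};G_x)$ with $\rsect(\{\phi<a\};G_x)$ but never produces the isomorphism $\rsect(V;G_x)\simeq\rsect(V\setminus\gamma_0;G_x)$. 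The missing idea is to apply $\rsect_{\gamma_0}$ (resp.\ $\rsect_\gamma$ over $U\times V$) \emph{before} running Morse theory: the resulting object is supported in $\gamma_0$, $\phi$ is automatically proper on its support, $\rsect(\{\phi<0\};\scbul)=0$, and no exhaustion argument is needed.
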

\begin{proof}
(i) follows from~\eqref{eq:SSstar1b} and Lemma~\ref{le:sscone}. 

\medskip
\noindent
(ii) Using the exact sequence~\eqref{eq:exscone}, we have 
$G\simeq \cor_{\gamma\setminus\{0\}}\star F$.
Then the result again follows from~\eqref{eq:SSstar1b}
and Lemma~\ref{le:sscone}.

\medskip
\noindent
(iii) We set $H =\rsect_\gamma(G)\simeq\rhom(\cor_\gamma,G)$. It follows
from Theorem~\ref{th:opboim2} that $\SSi(H)\subset Z_\gamma$.   Choose a
vector $\xi\in\Int(\gamma_0^\circ)$ and consider the projection
\eqn
&&\theta\cl M\times V \to M\times \R,\quad\theta(x,v)=(x;\langle\xi,v\rangle).
\eneqn
Since $\gamma$ is a proper cone,
$\theta$ is proper on $\supp H$ and  we get by Theorem~\ref{th:opboim} that
$\SSi(\roim{\theta}(H))\subset\{\tau\leq 0\}$ where $(t;\tau)$ are the
coordinates on $T^*\R$. Moreover, 
$\supp\roim{\theta}(H)\subset M\times\{t\geq0\}$.

Now it is enough to prove that $\rsect(U\times\R; \roim{\theta}(H)) = 0$, for
any open subset $U$ of $M$.  Denote by $p\cl U\times\R\to\R$ the projection
and set $\tw H=\roim{p}\roim{\theta}(H)$.  Although $p$ is not proper on
$\supp(\tw H)$, one easily checks that
$\SSi(\tw H)\subset\{t\geq0,\tau\leq0\}$ and this implies
$\tw H\simeq 0$. (This is a special case of Corollary~\ref{cor:Morse}.)
\end{proof}
The next Lemma follows immediately from the adjunction
formula~\eqref{eq:adjstarhomc}.
\begin{lemma}\label{le:secthomc}
Let $F,G\in\Derb(\cor_E)$ and assume that $\LG F\isoto F$. Then we have
$\Hom[\Derb(\cor_E)](F,G)\simeq \rsect_\gamma(E;\rhomc(F,G))$.
\end{lemma}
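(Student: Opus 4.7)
The proof is a one-line application of the adjunction~\eqref{eq:adjstarhomc}. The plan is to take $F_1=\cor_\gamma$, $F_2=F$, $F_3=G$ in that adjunction, rewrite the left-hand side via the hypothesis and the right-hand side via a standard identity for $\rhom(\cor_\gamma,\scbul)$, and read off the asserted isomorphism.

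More concretely, \eqref{eq:adjstarhomc} yields
\[
\RHom(\cor_\gamma \star F,\,G)\simeq \RHom(\cor_\gamma,\,\rhomc(F,G)).
\]
The hypothesis $\LG F \isoto F$ (i.e.\ $\cor_\gamma \star F \simeq F$) rewrites the left-hand side as $\RHom(F,G)$. For the right-hand side I would use that for any $H\in\Derb(\cor_E)$ and the closed subset $\gamma\subset E$, one has $\rhom(\cor_\gamma, H)\simeq \rsect_\gamma(H)$, and consequently
\[
\RHom(\cor_\gamma,\,\rhomc(F,G))\simeq \rsect\bigl(E;\,\rsect_\gamma \rhomc(F,G)\bigr)\simeq \rsect_\gamma\bigl(E;\,\rhomc(F,G)\bigr).
\]
Chaining these two identifications produces the desired isomorphism; taking $H^0$ recovers the statement in the form $\Hom$. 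There is no obstacle here beyond unwinding the definitions, which is exactly why the lemma is stated as an immediate consequence of the adjunction formula.
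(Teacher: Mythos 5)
Your proof is correct and is exactly the argument the paper intends: the lemma is stated there as an immediate consequence of the adjunction~\eqref{eq:adjstarhomc}, applied with $F_1=\cor_\gamma$, $F_2=F$, $F_3=G$, using $\cor_\gamma\star F=\LG F\isoto F$ on one side and $\RHom(\cor_\gamma,H)\simeq\rsect_\gamma(E;H)$ on the other. Nothing to add.
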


\begin{proposition}\label{pro:conv3}
\banum
\item
Let $F\in\Derb(\cor_E)$. Then $F\in \Derb_{Z_\gamma}(\cor_E)^{\perp,l}$ if and
only if the natural morphism $\LG F\to F$ is an isomorphism.
\item
Let $G\in\Derb_{Z_\gamma}(\cor_E)$. Then $\LG G\simeq0$.
\eanum
\end{proposition}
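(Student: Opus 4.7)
The plan is to use Lemma~\ref{le:secthomc} together with the microsupport estimates of Proposition~\ref{pro:conv2} and the corollary to Proposition~\ref{pro:SSconv} to prove the ``if'' direction of~(a) directly; the ``only if'' direction and part~(b) will then follow formally from idempotency of $\LG$.

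First I would treat the ``if'' direction of~(a). Assume $\LG F\isoto F$. Proposition~\ref{pro:conv2}~(i) gives $\SSi(F)\subset T^*M\times V\times\gamma_0^\circ$, while by the definition of $Z_\gamma$ in~\eqref{eq:UCgamma} any $H\in\Derb_{Z_\gamma}(\cor_E)$ satisfies $\SSi(H)\subset T^*M\times V\times(V^*\setminus\Int\gamma_0^\circ)$. The corollary to Proposition~\ref{pro:SSconv} then yields
\[
\SSi(\rhomc(F,H))\subset T^*M\times V\times\bigl(\gamma_0^\circ\cap(V^*\setminus\Int\gamma_0^\circ)\bigr)
=T^*M\times V\times\partial\gamma_0^\circ\subset Z_\gamma,
\]
so that $\rhomc(F,H)\in\Derb_{Z_\gamma}(\cor_E)$. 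Proposition~\ref{pro:conv2}~(iii) combined with Lemma~\ref{le:secthomc} then gives $\Hom[\Derb(\cor_E)](F,H)\simeq\rsect_\gamma(E;\rhomc(F,H))\simeq 0$, proving $F\in\Derb_{Z_\gamma}(\cor_E)^{\perp,l}$.

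For the converse, I would consider the distinguished triangle $\LG F\to F\to G\to[+1]$, in which $G\simeq\cor_{\gamma\setminus\{0\}}\star F$ lies in $\Derb_{Z_\gamma}(\cor_E)$ by Proposition~\ref{pro:conv2}~(ii). Since $\LG\circ\LG\simeq\LG$, the ``if'' direction just proved, applied to $\LG F$ itself, shows $\LG F\in\Derb_{Z_\gamma}(\cor_E)^{\perp,l}$. In the long exact sequence obtained by applying $\Hom(-,G[n])$ to the triangle, the outer terms $\Hom(F,G[n])$ and $\Hom(\LG F,G[n])$ vanish for every $n\in\Z$, so $\Hom(G,G[n])\simeq 0$; in particular $\id_G=0$ and $G\simeq 0$, i.e.\ $\LG F\isoto F$. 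Part~(b) then follows in the same spirit: for $G\in\Derb_{Z_\gamma}(\cor_E)$ the triangle $\LG G\to G\to G'\to[+1]$ exhibits $\LG G$ as an extension of two objects of the triangulated subcategory $\Derb_{Z_\gamma}(\cor_E)$, so $\LG G\in\Derb_{Z_\gamma}(\cor_E)$; but~(a) applied to $\LG G$ also gives $\LG G\in\Derb_{Z_\gamma}(\cor_E)^{\perp,l}$, which forces $\id_{\LG G}=0$ and $\LG G\simeq 0$.

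The main obstacle is the microsupport bound in the first step: one must recognize that under the hypothesis $\LG F\simeq F$ both $\SSi(F)$ and $\SSi(H)$ are of the ``pulled-back'' product form $T^*M\times V\times A'$ for which the clean intersection version of Proposition~\ref{pro:SSconv} (the corollary) applies, and that the resulting intersection $\gamma_0^\circ\cap(V^*\setminus\Int\gamma_0^\circ)=\partial\gamma_0^\circ$ is automatically contained in $V^*\setminus\Int\gamma_0^\circ$. Everything after that is purely formal manipulation in the triangulated category, using only the idempotency of the projector $\LG$ and the fact that $\Derb_{Z_\gamma}(\cor_E)$ is a null system.
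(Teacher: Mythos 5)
Your proof is correct and follows essentially the same route as the paper: part (a)-(if) via the microsupport estimate~\eqref{eq:SSstar1b} applied to $\rhomc(F,H)$ together with Proposition~\ref{pro:conv2}~(iii) and Lemma~\ref{le:secthomc}; part (a)-(only if) and part (b) via the distinguished triangle $\LG F\to F\to G\to[+1]$, idempotency of $\LG$, and the observation that an object lying in both $\Derb_{Z_\gamma}(\cor_E)$ and its left orthogonal is zero. The only cosmetic difference is that you spell out the vanishing of $\id_G$ through the long exact sequence where the paper simply cites that the left orthogonal is triangulated.
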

\begin{proof}
(a)-(i) Assume $F\simeq \LG F$. Let $G\in\Derb_{Z_\gamma}(\cor_E)$ and
set $H\eqdot\rhomc(F,G)$. Then $H$ belongs to $\Derb_{Z_\gamma}(\cor_E)$
by~\eqref{eq:SSstar1b} and $\rsect_\gamma(E;H)\simeq0$ by
Proposition~\ref{pro:conv2}. Since $F\simeq \LG F$, we get
$\Hom[\Derb(\cor_E)](F,G) =0$ by Lemma~\ref{le:secthomc}.

\smallskip
\noindent
(a)-(ii) Assume that $F\in \Derb_{Z_\gamma}(\cor_E)^{\perp,l}$ and consider a
distinguished triangle $\LG F\to F\to G\to[+1]$. By~(a)-(i) $\LG F$ also
belongs to $\Derb_{Z_\gamma}(\cor_E)^{\perp,l}$. Hence so does $G$. On the
other hand, $G\in\Derb_{Z_\gamma}(\cor_E)$ by
Proposition~\ref{pro:conv2}. Hence, $G\simeq0$.

\medskip
\noindent
(b) Let $G\in\Derb_{Z_\gamma}(\cor_E)$ and consider a distinguished triangle
$\LG G\to G\to H\to[+1]$. Since both $G$ and $H$ belong to
$\Derb_{Z_\gamma}(\cor_E)$, so does $\LG G$. Since $\LG G$ belongs to 
$\Derb_{Z_\gamma}(\cor_E)^{\perp,l}$, it is $0$.
\end{proof}
\begin{remark}\label{rem:LGRG}
One can also consider the  projector 
\eq
&&\RG\eqdot \rhomc(\cor_\gamma,\scbul)
\cl\Derb(\cor_E)\to\Derb(\cor_E).\label{eq:Rgamma}
\eneq
Then we obtain similar results to Propositions~\ref{pro:conv2},~\ref{pro:conv3}
and Lemma~\ref{le:secthomc} with $\RG$ instead of $\LG$.
Note that the pair $(\LG,\RG)$ is a pair of adjoint functors:
\eqn
\Hom[\Derb(\cor_E)](\LG F,G)&\simeq&\Hom[\Derb(\cor_E)](F,\RG G)\\
&\simeq&\Hom[\Derb(\cor_E)](\cor_\gamma,\rhomc(F,G)).
\eneqn
Note that $\cor_\gamma$ is cohomologically constructible. If 
we assume that $\Int(\gamma)\not=\emptyset$, then
$\RD'\cor_\gamma\simeq\cor_{\Int(\gamma)}$ and  one 
deduces from Lemma~\ref{lem:convhom2} that
\eq\label{eqhomcgamma}
&&\rhomc(\cor_\gamma,\cor_\gamma)\simeq\cor_{\Int(-\gamma)}[d_V],
\eneq
where $d_V$ is the dimension of $V$.
\end{remark}

\subsubsection*{Projector and localization}
Recall that $E=M\times V$ is a trivial vector bundle over $M$, $\gamma_0$ is a
cone satisfying~\eqref{eq:hypcone} and the sets ${U_\gamma}$ and ${Z_\gamma}$
are defined in~\eqref{eq:UCgamma}.  By definition $\Derb(\cor_E;{U_\gamma})$
is a localization of $\Derb(\cor_E)$ and we let
$Q_\gamma\cl \Derb(\cor_E)\to\Derb(\cor_E;U_\gamma)$ be the functor of localization.

\begin{proposition}\label{pro:Piota}
\bnum
\item 
The functor $\LG$ defined in~\eqref{eq:Pgamma}
takes its values in $\Derb_{Z_\gamma}(\cor_E)^{\perp,l}$
and sends $\Derb_{Z_\gamma}(\cor_E)$ to $0$.
It factorizes through $Q_\gamma$ and induces a functor
$\lG\cl\Derb(\cor_E;{U_\gamma}) \to \Derb(\cor_E)$ such that 
$\LG\simeq\lG\circ Q_\gamma$.
\item
The functor $\lG$ is left adjoint
to $Q_\gamma$ and induces an equivalence \\
$\Derb(\cor_E;{U_\gamma})\simeq  \Derb_{{Z_\gamma}}(\cor_E)^{\perp,l}$.
\enum
\end{proposition}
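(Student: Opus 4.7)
For part~(i), the vanishing of $\LG$ on $\Derb_{Z_\gamma}(\cor_E)$ is already Proposition~\ref{pro:conv3}(b). To see that $\LG F$ lies in $\Derb_{Z_\gamma}(\cor_E)^{\perp,l}$ for every $F$, I would invoke the projector property $\LG\circ\LG\simeq\LG$: the morphism $\LG(\LG F)\to \LG F$ is then an isomorphism, so $\LG F$ satisfies the criterion of Proposition~\ref{pro:conv3}(a). For the factorization, take any morphism $u\cl F_1\to F_2$ in $\Derb(\cor_E)$ that becomes an isomorphism in $\Derb(\cor_E;U_\gamma)$; its cone $F_3$ lies in $\Derb_{Z_\gamma}(\cor_E)$, hence $\LG F_3\simeq 0$, and applying $\LG$ to the distinguished triangle $F_1\to F_2\to F_3\to[+1]$ shows that $\LG u$ is an isomorphism. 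The universal property of the Verdier quotient then provides a unique $\lG$ with $\LG\simeq \lG\circ Q_\gamma$.

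For part~(ii), the strategy is to apply the general criterion recalled just before the statement (the classical result of~\cite[Exe.~10.15]{KS05}). The only hypothesis to check is the existence, for every $F\in\Derb(\cor_E)$, of a distinguished triangle $F'\to F\to F''\to[+1]$ with $F'\in\Derb_{Z_\gamma}(\cor_E)^{\perp,l}$ and $F''\in\Derb_{Z_\gamma}(\cor_E)$. But this is furnished by Proposition~\ref{pro:conv2}(ii): the triangle $\LG F\to F\to G\to[+1]$ has $\LG F\in\Derb_{Z_\gamma}(\cor_E)^{\perp,l}$ by part~(i), and $\SSi(G)\subset Z_\gamma$, i.e.\ $G\in\Derb_{Z_\gamma}(\cor_E)$. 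The cited result then yields both that $\Derb_{Z_\gamma}(\cor_E)^{\perp,l}\hookrightarrow\Derb(\cor_E)$ admits a left adjoint and that the composition $\Derb_{Z_\gamma}(\cor_E)^{\perp,l}\hookrightarrow\Derb(\cor_E)\to\Derb(\cor_E;U_\gamma)$ is an equivalence.

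It remains to identify $\lG$ with this left adjoint and hence establish the adjunction $(\lG,Q_\gamma)$. The inverse of the above equivalence sends $Q_\gamma F$ to the object $F'$ of the triangle, and the construction of $F'$ as $\LG F$ shows that this inverse is naturally isomorphic to $\lG$. Consequently, for $X\in\Derb(\cor_E;U_\gamma)$ and $Y\in\Derb(\cor_E)$,
\[
\Hom[\Derb(\cor_E)](\lG X,Y)\simeq \Hom[\Derb(\cor_E)](\lG X,\lG Q_\gamma Y)\simeq \Hom[\Derb(\cor_E;U_\gamma)](X,Q_\gamma Y),
\]
where the first isomorphism uses that, because $\lG X\in\Derb_{Z_\gamma}(\cor_E)^{\perp,l}$, any morphism from $\lG X$ to $Y$ factors uniquely through $\lG Q_\gamma Y\to Y$ (obtained by applying the triangle to $Y$), and the second isomorphism is the equivalence above.

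The potential obstacle is purely formal: one has to check carefully that the factorization in~(i) really produces the left adjoint in~(ii), rather than merely some functor satisfying $\LG\simeq \lG\circ Q_\gamma$. The essential point is the projector relation $\LG\circ\LG\simeq\LG$ combined with Proposition~\ref{pro:conv3}, which guarantees both that $\LG F\to F$ exhibits $\LG F$ as the reflection of $F$ into $\Derb_{Z_\gamma}(\cor_E)^{\perp,l}$ and that its cone lies in the null system; once these are in place the rest is bookkeeping with the universal properties.
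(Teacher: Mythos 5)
Your proposal is correct and follows essentially the same route as the paper, whose proof is simply the citation of Proposition~\ref{pro:conv3} together with the classical localization criterion of~\cite[Exe.~10.15]{KS05}; you have merely spelled out the details (the projector relation $\LG\circ\LG\simeq\LG$, the triangle from Proposition~\ref{pro:conv2}~(ii), and the identification of $\lG$ with the left adjoint). No gaps.
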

This is visualized by the diagram
\eq\label{eq:diagPomega}
&&\vcenter{\xymatrix{
\Derb_{Z_\gamma}(\cor_E)\ar@{^(->}[r]
                  &\Derb(\cor_E)\ar[r]^-{Q_\gamma}\ar[rd]|-{\LG}
&\Derb(\cor_E;U_\gamma)\ar[d]^-{\lG}_-\sim\\
&&\Derb_{Z_\gamma}(\cor_E)^{\perp,l}.
}}
\eneq
\begin{proof}
  This follows from Proposition~\ref{pro:conv3} together with the classical
  results on the localization of triangulated categories recalled in
  Section~\ref{section:mts} (see {\em e.g.,}~\cite[Exe.~10.15]{KS05}).
\end{proof}
In particular, we have for  $F,G \in \Derb(\cor_E)$
\eq\label{eq:Piota}
&&\ba{rcl}
\Hom[\Derb(\cor_E;{U_\gamma})](Q_\gamma(F), Q_\gamma(G))
&\simeq& \Hom[\Derb(\cor_E)](\LG(F),G)\\
&\simeq& \Hom[\Derb(\cor_E)](\LG(F),\LG(G)).
\ea
\eneq
There is a similar result to Proposition~\ref{pro:Piota}, replacing the functor $\LG$ 
with the functor $\RG$. The functor $\RG$
takes its values in $\Derb_{Z_\gamma}(\cor_E)^{\perp,r}$
and sends $\Derb_{Z_\gamma}(\cor_E)$ to $0$.
It factorizes through $Q_\gamma$ and induces a functor
$\rG\cl\Derb(\cor_E;{U_\gamma}) \to \Derb(\cor_E)$ such that 
$\RG\simeq\rG\circ Q_\gamma$. 

We notice that, for  $F\in\Derb_{Z_\gamma}(\cor_E)^{\perp,l}$ or  
$G\in\Derb_{Z_\gamma}(\cor_E)^{\perp,r}$, we have
\eq\label{eq:rhomcperpr}
\rhomc(F,G)\in\Derb_{Z_\gamma}(\cor_E)^{\perp,r} .
\eneq
By Proposition~\ref{pro:conv2} (used with $\RG$ instead of $\LG$) we obtain
in particular
\eq\label{eq:SSrhomc}
\rhomc(F,G) \in \Derb_{\ol{U_\gamma}}(\cor_M).
\eneq

\begin{notation}\label{not:Derbgamma}
Let us set for short
\eq\label{eq:Derbgamma}
&&\ba{rcl}
\Derb(\corg_M)&\eqdot&\Derb(\cor_E;{U_\gamma}),\\
\Derb(\corgl_M)&\eqdot&\Derb_{Z_\gamma}(\cor_E)^{\perp,l},\\
\Derb(\corgr_M)&\eqdot&\Derb_{Z_\gamma}(\cor_E)^{\perp,r}.
\ea\eneq
When $M=\rmpt$, we set
\eq
\Derb(\corg)&\eqdot&\Derb(\corg_\rmpt)
\eneq
and similarly with $\Derb(\corgl)$ and $\Derb(\corgr)$.
\end{notation}

Denote by $p\cl E=M\times V\to V$ the projection
and denote by $\Gamma^\gamma$ the functor
\eq
&&\Gamma^\gamma(\scbul)=\RHom(\cor_\gamma,\scbul)\cl \Derb(\corg)\to\Derb(\cor).
\eneq
We get the diagram of categories in which the horizontal arrows are equivalences
\eq&&\ba{rcl}
\xymatrix{
\Derb(\cor_M^{\gamma,l})\ar[d]^-{\reim p}
   &\Derb(\cor_M^\gamma)\ar[l]_-{\lG}^-\sim\ar[r]^-{\rG}_-\sim
                     &\Derb(\cor_M^{\gamma,r})\ar[d]^-{\roim p}\\
\Derb(\cor^{\gamma,l})
  &\Derb(\cor^\gamma)\ar[l]_-{\lG}^-\sim\ar[r]^-{\rG}_-\sim\ar[d]^-{\Gamma^\gamma}
                 &\Derb(\cor^{\gamma,r})\\
    &\Derb(\cor).&
 }\ea\eneq
Note that
by Lemma~\ref{le:secthomc},
for $F\in \Derb(\corgl_M)$ or $G\in \Derb(\corgr_M)$, we have
\eq\label{eq:hominDgamma}
&&\RHom[\Derb(\corg_M)](F,G)\simeq\Gamma^\gamma\circ\roim{p}\rhomc(F,G).
\eneq

\subsubsection*{Embedding the category $\Derb(\cor_M)$ into $\Derb(\corg_M)$}
Recall that $q\cl E\to M$ denotes the projection and consider the functor 
\eqn
&&\Psi_\gamma\cl \Derb(\cor_M)\to\Derb(\cor_E),\quad
F\mapsto \opb{q}F\tenso\cor_\gamma.
\eneqn
\begin{lemma}\label{le:dbMtodbE}
One has the isomorphism of functors $\LG\circ\Psi_\gamma\isoto\Psi_\gamma$.
\end{lemma}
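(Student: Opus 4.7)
The functor $\LG = \cor_\gamma \star \scbul$ comes with the natural transformation $\varepsilon\cl \LG\to \id$ induced by the morphism $\cor_\gamma\to \cor_{\{0\}}$ (where $\{0\}$ stands for $M\times\{0\}$). The claim is that $\varepsilon_{\Psi_\gamma(F)}\cl \LG\Psi_\gamma(F)\to \Psi_\gamma(F)$ is an isomorphism, functorially in $F\in\Derb(\cor_M)$.

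My plan is to reduce, via the projection formula, to the identity $\cor_\gamma\star\cor_\gamma\simeq\cor_\gamma$. Unwinding Definition~\ref{def:convs},
\[
\LG\Psi_\gamma(F) = \reim{s}(\opb{q_1}\cor_\gamma\otimes \opb{q_2}\opb{q}F\otimes \opb{q_2}\cor_\gamma).
\]
Since $q\circ q_2=q\circ s\cl M\times V\times V\to M$, one has $\opb{q_2}\opb{q}F\simeq \opb{s}\opb{q}F$; the projection formula then gives
\[
\LG\Psi_\gamma(F)\simeq \opb{q}F\otimes \reim{s}(\opb{q_1}\cor_\gamma\otimes \opb{q_2}\cor_\gamma) \simeq \opb{q}F\otimes(\cor_\gamma\star\cor_\gamma).
\]
All these identifications are natural in $F$, so it suffices to check that the canonical morphism $\cor_\gamma\star\cor_\gamma\to\cor_\gamma$ (obtained by applying $\varepsilon$ to $\cor_\gamma$) is an isomorphism.

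For this, since $\gamma_0$ is a closed convex proper cone containing $0$, the fiber $\{(v_1,v_2)\in\gamma_0\times\gamma_0;\, v_1+v_2=w\}$ is compact for every $w\in V$; hence $s$ is proper on the support of $\opb{q_1}\cor_\gamma\otimes\opb{q_2}\cor_\gamma$. Therefore $\reim{s}$ and $\roim{s}$ agree on this sheaf, and consequently $\cor_\gamma\star\cor_\gamma\simeq\cor_\gamma\npstar\cor_\gamma$. By Lemma~\ref{le:sscone} one has $\SSi(\cor_\gamma)\subset T^*_MM\times V\times\gamma_0^\circ$, and the microlocal cut-off lemma (Proposition~\ref{prop:cut-offlemma}) asserts that the canonical morphism $\cor_\gamma\npstar\cor_\gamma\to\cor_\gamma$ is then an isomorphism. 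Combining, $\cor_\gamma\star\cor_\gamma\isoto\cor_\gamma$.

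No step is really a serious obstacle: the argument is purely formal once one notes the properness of $s$ on $\gamma_0\times\gamma_0$ and quotes Lemma~\ref{le:sscone} together with Proposition~\ref{prop:cut-offlemma}. The mild bookkeeping that remains is verifying that the isomorphism constructed above agrees with $\varepsilon_{\Psi_\gamma(F)}$; this in turn reduces to matching the ``first-factor'' and ``second-factor'' morphisms $\cor_\gamma\to\cor_{\{0\}}$ under the symmetry $F\star G\simeq G\star F$ coming from the commutativity of the addition $s(v_1,v_2)=v_1+v_2$, which is straightforward.
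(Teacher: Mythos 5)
Your proof is correct and follows essentially the same route as the paper: rewrite $\LG\Psi_\gamma(F)$ using $q\circ q_2=q\circ s$ and the projection formula to reduce to $\cor_\gamma\star\cor_\gamma\simeq\cor_\gamma$. The paper leaves that last identity implicit, whereas you justify it (correctly) via properness of $s$ on $\gamma_0\times\gamma_0$ and the microlocal cut-off lemma, so your write-up is if anything slightly more complete.
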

\begin{proof}
One has 
\eqn
\LG\circ\Psi_\gamma(F)
&=&\reim{s}(\opb{q_1}\cor_\gamma\tenso\opb{q_2}(\opb{q}F\tenso\cor_\gamma))\\
&\simeq&
\reim{s}(\opb{q_1}\cor_\gamma\tenso\opb{q_2}\cor_\gamma\tenso\opb{q_2}(\opb{q}F))\\
&\simeq&
\reim{s}(\opb{q_1}\cor_\gamma\tenso\opb{q_2}\cor_\gamma\tenso\opb{s}(\opb{q}F))\\
&\simeq&
\reim{s}(\opb{q_1}\cor_\gamma\tenso\opb{q_2}\cor_\gamma)\tenso\opb{q}F\\
&\simeq&
\cor_\gamma\tenso\opb{q}F.
\eneqn
\end{proof}
In the sequel, we consider $\Psi_\gamma$ as a functor 
\eq\label{eq:dbMtodbEb}
&&\Psi_\gamma\cl \Derb(\cor_M)\to\Derb(\corg_M).
\eneq
\begin{proposition}\label{pro:dbMtodbE}
The functor $\Psi_\gamma$ in~\eqref{eq:dbMtodbEb} is fully faithful.
\end{proposition}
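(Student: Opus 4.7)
The plan is to compute $\Hom[\Derb(\corg_M)](\Psi_\gamma F,\Psi_\gamma G)$ via a chain of adjunctions and identify it with $\Hom[\Derb(\cor_M)](F,G)$, with the functorial map $\Psi_\gamma$ inducing the isomorphism.

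First I would use Proposition~\ref{pro:Piota} and formula~\eqref{eq:Piota}, combined with Lemma~\ref{le:dbMtodbE} (which gives $\LG\Psi_\gamma\simeq\Psi_\gamma$), to rewrite
\[
\Hom[\Derb(\corg_M)](\Psi_\gamma F,\Psi_\gamma G) \simeq \Hom[\Derb(\cor_E)](\opb q F\tenso\cor_\gamma,\opb q G\tenso\cor_\gamma),
\]
reducing the problem to showing that the functor $F\mapsto\opb q F\tenso\cor_\gamma$ from $\Derb(\cor_M)$ to $\Derb(\cor_E)$ is fully faithful. Then, via the two adjunctions $(\opb q,\roim q)$ and $(\scbul\tenso\cor_\gamma,\rhom(\cor_\gamma,\scbul))$, this latter Hom becomes $\Hom[\Derb(\cor_M)](F,\roim q\rhom(\cor_\gamma,\opb q G\tenso\cor_\gamma))$.

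Next, for the closed immersion $i\cl\gamma\hookrightarrow E$ with $q_\gamma\eqdot q\circ i$, writing $\cor_\gamma = i_*\cor$ and using $\epb i\,i_*\simeq\id$, one gets $\rhom(\cor_\gamma,\opb q G\tenso\cor_\gamma)\simeq\opb q G\tenso\cor_\gamma$ (equivalently, $\rhom(\cor_\gamma,\scbul)=\rsect_\gamma$ acts as the identity on a sheaf already supported on $\gamma$). Hence
\[
\roim q\rhom(\cor_\gamma,\opb q G\tenso\cor_\gamma) \simeq \roim{q_\gamma}\opb{q_\gamma}G.
\]

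It then remains to show $\roim{q_\gamma}\opb{q_\gamma}G\simeq G$, and this is where the geometric input enters and also where the main obstacle lies. Since $\gamma_0$ is a closed convex cone containing $0$, it is star-shaped at $0$ hence contractible, so $q_\gamma\cl M\times\gamma_0\to M$ has contractible fibers. Although $q_\gamma$ is not proper, a stalk computation on small contractible open subsets $U\subset M$ gives $\rsect(U\times\gamma_0;\opb{q_\gamma}G)\simeq\rsect(U;G)$ via the deformation retraction $(x,v,t)\mapsto(x,tv)$, so the unit map $G\to\roim{q_\gamma}\opb{q_\gamma}G$ is an isomorphism. Combining everything yields the desired identification, and naturality of the various adjunctions ensures that it inverts the map induced by $\Psi_\gamma$. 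The non-properness of $q_\gamma$ prevents a purely formal base-change argument and forces use of the specific convex-cone geometry; this is the one non-formal step in the proof.
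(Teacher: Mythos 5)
Your proposal is correct and follows essentially the same route as the paper: reduce via Lemma~\ref{le:dbMtodbE} and~\eqref{eq:Piota} to a $\Hom$ in $\Derb(\cor_E)$, apply the adjunctions to land on $\roim{q}\rhom(\cor_\gamma,\opb{q}G\tenso\cor_\gamma)\simeq\roim{q}(\opb{q}G\tenso\cor_\gamma)$, and conclude by the contractibility of the fibers of $\gamma\to M$ (the paper cites \cite[Prop.~2.7.8]{KS90} for exactly this last, non-formal step). You are in fact slightly more explicit than the paper about the passage from the localized category to $\Derb(\cor_E)$ and about why $\rsect_\gamma$ acts as the identity on sheaves supported in $\gamma$.
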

\begin{proof}
Let $F,G\in\Derb(\cor_M)$. Then
\eqn
\Hom[\Derb(\cor_E)](\cor_\gamma\tenso\opb{q}G,\cor_\gamma\tenso\opb{q}F)&&\\
&&\hspace{-9ex}
\simeq\Hom[\Derb(\cor_M)](G,\roim{q}\rhom(\cor_\gamma,\opb{q}F\tenso\cor_\gamma))\\
&&\hspace{-9ex}
\simeq\Hom[\Derb(\cor_M)](G,\roim{q}(\opb{q}F\tenso\cor_\gamma)).
\eneqn
Hence, it is enough to check the isomorphism
\eq\label{eq:isoopbqoimq}
&&F\isoto\roim{q}(\opb{q}F\tenso\cor_\gamma).
\eneq
Denote by $\tw q$ the projection $\gamma\to M$.
The isomorphism~\eqref{eq:isoopbqoimq} reduces to
\eqn
&&F\simeq\roim{\tw q}\opb{\tw q}F
\eneqn
and this last isomorphism follows from the fact that $\gamma$ is a
closed convex cone, hence is contractible (see for example~\cite[Prop.~2.7.8]{KS90}).
\end{proof}

\subsubsection*{A cut-off result}
Recall that we consider a trivial vector bundle $E=M\times V$
and  a trivial cone $\gamma=M\times\gamma_0$ satisfying~\eqref{eq:hypcone}.
We also recall that a subset of $T^*M\times V^*$ is a cone if it is stable by
the action~\eqref{eq:defcone}.
 The map $\piw$ is defined in~\eqref{eq:projfibre} and we have set
 (see~\eqref{eq:UCgamma}):
\eqn
&&U_\gamma= T^*M\times V\times \Int\gamma_0^\circ.
\eneqn
By the equivalence $\lG$ of Proposition~\ref{pro:Piota}, any object
$F\in\Derb(\cor_E;{U_\gamma})$ has a canonical representative in
$\Derb(\cor_E)$ again denoted by $F$ and we have $F\simeq \LG(F)$. By
Proposition~\ref{pro:conv2}~(i) we have $\SSi(F) \subset \ol{U_\gamma}$.

We first state a kind of cut-off lemma in the case where $M$ is a point. 
\begin{lemma}\label{le:cutoff0}
Let $V$ be a vector space and $\gamma\subset V$ a closed convex proper
cone containing $0$.
Set $U_\gamma\eqdot V \times \Int\gamma^\circ$ and
$Z_\gamma\eqdot T^*V\setminus U_\gamma$.
Let $F\in\Derb_{Z_\gamma}(\cor_V)^{\perp,l}$.
We assume that there exists a closed cone $A \subset V^*$ such that
\bnum
\item  $A \subset \Int\gamma^\circ\cup \{0\}$,
\item $\SSi(F)\cap U_\gamma \subset  V\times A$.
\enum
Then $\SSi(F)\subset (\SSi(F)\cap U_\gamma) \cup T^*_VV$.
\end{lemma}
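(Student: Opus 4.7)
The plan is to enlarge $\gamma$ to a closed convex proper cone $\gamma'\supsetneq\gamma$ such that its polar $(\gamma')^\circ$ strictly thickens $A$ while remaining strictly inside $\gamma^\circ$, meaning $A\setminus\{0\}\subset\Int(\gamma')^\circ$ and $(\gamma')^\circ\setminus\{0\}\subset\Int\gamma^\circ$. Such a $\gamma'$ exists by a standard convex-analytic argument: on the unit sphere of $V^*$, the closed set $A\cap S^{d-1}$ is compactly contained in the open set $\Int\gamma^\circ\cap S^{d-1}$, so one may sandwich a closed convex set strictly between them and take the associated closed convex cone. Once $\gamma'$ is fixed, the goal reduces to proving $L_{\gamma'}F\simeq F$, from which Proposition~\ref{pro:conv2}~(i) applied to $\gamma'$ gives $\SSi(F)\subset V\times(\gamma')^\circ\subset V\times(\Int\gamma^\circ\cup\{0\})=U_\gamma\cup T^*_VV$, and combining with hypothesis~(ii) immediately yields $\SSi(F)\subset(\SSi(F)\cap U_\gamma)\cup T^*_VV$.

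First, by Proposition~\ref{pro:conv3}~(a), $F\simeq\LG F$, and Proposition~\ref{pro:conv2}~(i) gives $\SSi(F)\subset V\times\gamma^\circ$; together with~(ii) this sharpens to $\SSi(F)\subset V\times(\partial\gamma^\circ\cup A)$. Applying the functor $\cdot\star F$ to the distinguished triangle $\cor_{\gamma'\setminus\gamma}\to\cor_{\gamma'}\to\cor_\gamma\to[+1]$ yields
\[
G\eqdot\cor_{\gamma'\setminus\gamma}\star F\to L_{\gamma'}F\to F\to[+1].
\]
Lemma~\ref{le:sscone} gives $\SSi(\cor_{\gamma'\setminus\gamma})\subset V\times(\gamma^\circ\setminus\Int(\gamma')^\circ)$. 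Intersecting via~\eqref{eq:SSstar1b} with the refined estimate $\SSi(F)\subset V\times(\partial\gamma^\circ\cup A)$, and using the defining properties of $\gamma'$ (namely $A\subset\Int(\gamma')^\circ\cup\{0\}$ and $\Int(\gamma')^\circ\subset\Int\gamma^\circ$), one obtains $\SSi(G)\subset V\times\partial\gamma^\circ\subset Z_\gamma$. Hence $G\in\Derb_{Z_\gamma}(\cor_V)$, and Proposition~\ref{pro:conv3}~(b) yields $\LG G\simeq0$.

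Applying the triangulated functor $\LG$ to the previous triangle then produces $\LG L_{\gamma'}F\simeq\LG F=F$. Separately, a direct stalk computation shows $\cor_\gamma\star\cor_{\gamma'}\simeq\cor_{\gamma'}$: the fiber of the sum map $s$ over $v\in\gamma'$ intersected with $\gamma\times\gamma'$ equals $\{w\in\gamma:v-w\in\gamma'\}$, a non-empty compact convex set (bounded because $\gamma\cap(-\gamma')\subset\gamma'\cap(-\gamma')=\{0\}$ by properness of $\gamma'$), hence contractible, yielding stalk $\cor$. Therefore $\LG L_{\gamma'}F=(\cor_\gamma\star\cor_{\gamma'})\star F\simeq L_{\gamma'}F$, so $L_{\gamma'}F\simeq F$ as required. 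The main technical obstacles are the construction of the intermediate cone $\gamma'$ and the stalk identification $\cor_\gamma\star\cor_{\gamma'}\simeq\cor_{\gamma'}$; both are routine applications of convex analysis and the convolution machinery developed in this section.
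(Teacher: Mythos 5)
Your proof is correct and follows essentially the same route as the paper's: introduce a closed convex proper cone sandwiched between $\gamma$ and $A^\circ$ (your $\gamma'$ is the paper's $\lambda=A^\circ$ after enlarging $A$), use the excision triangle $\cor_{\gamma'\setminus\gamma}\star F\to L_{\gamma'}F\to \LG F\to[+1]$ together with the refined microsupport estimate from Proposition~\ref{pro:conv2}~(ii) to show the first term lies in $\Derb_{Z_\gamma}(\cor_V)$, and then combine $\LG G\simeq 0$ with the idempotency $\cor_\gamma\star\cor_{\gamma'}\simeq\cor_{\gamma'}$ to conclude $L_{\gamma'}F\simeq F$. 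The only cosmetic differences are that the paper passes through the equivalence $\Derb(\cor_V;U_\gamma)\simeq\Derb_{Z_\gamma}(\cor_V)^{\perp,l}$ of Proposition~\ref{pro:Piota} instead of invoking Proposition~\ref{pro:conv3}~(b) directly, and reads off the final microsupport bound from the explicit computation of $\SSi(L_\lambda F)$ rather than citing Proposition~\ref{pro:conv2}~(i) for $\gamma'$; both are equivalent.
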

\begin{proof}
(i) Up to enlarging $A$ we may as well assume that 
$\SSi(F)\cap U_\gamma \subset V\times \Int A$. We set $\lambda = A^\circ$.
Hence $\lambda$ is a closed convex proper cone of $V$ and we have
\eq
\label{eq:cutoff1}
&\lambda^\circ \setminus \{0\} \subset \Int(\gamma^\circ), \\
&\label{eq:cutoff2}
\SSi(F)\cap U_\gamma \subset V\times \Int(\lambda^\circ).
\eneq
We will prove that $L_\lambda(F)$ satisfies the conclusion of the lemma
as well as the isomorphism $L_\lambda(F) \isoto F$.

\medskip\noindent (ii)
By~\eqref{eq:cutoff2} and Proposition~\ref{pro:conv2}~(ii) we have
\eq\label{eq:cutoff3}
\SSi(F)\subset V\times (\partial\gamma^\circ \cup \Int(\lambda^\circ)).
\eneq
By~\eqref{eq:SSstar1b} we deduce
\eqn
 \SSi(L_\lambda F)
&\subset & V\times (\lambda^\circ \cap (\partial\gamma^\circ\cup \Int(\lambda^\circ)) )\\
&=& V\times (  \Int(\lambda^\circ) \cup \{0\} )\\
&\subset & U_\gamma \cup T^*_VV .
\eneqn

\medskip\noindent (iii)
It remains to see that $F \simeq L_\lambda(F)$. We consider the
distinguished triangle
$\cor_{\lambda \setminus \gamma}\star F \to L_\lambda F \to \LG F \to[+1]$.
We have $\LG F \isoto F$.
By~\eqref{eq:cutoff3}, Lemma~\ref{le:sscone} and~\eqref{eq:SSstar1b}
we have
\eqn
\SSi(\cor_{\lambda\setminus\gamma} \star F)
\subset V\times
( (\gamma^\circ  \setminus \Int(\lambda^\circ) ) \cap
(\partial \gamma^\circ \cup \Int(\lambda^\circ) ))
\subset Z_\gamma,
\eneqn
which shows that $L_\lambda F\to F$ is an isomorphism in
$\Derb(\cor_V;U_\gamma)$.  By Proposition~\ref{pro:Piota} we
obtain $F\simeq \LG (L_\lambda F)$.  But
$\cor_\gamma \star \cor_\lambda \simeq \cor_\lambda$ and we get
finally $F \simeq L_\lambda F$.
\end{proof}
Now we extend Lemma~\ref{le:cutoff0} to the case of an arbitrary manifold $M$.
We consider a finite dimensional real vector space $E=E'\times E''$
with $E'=\R^d$.
We write $x=(x',x'')\in E'\times E''$ and $x'=(x'_1,\dots,x'_d)\in\R^d$.
We set $U = ]-1,1[^d\times E''$.
We choose a diffeomorphism $\phi\cl ]-1,1[ \isoto \R$ such that
$d\phi(t) \geq 1$ for all $t\in ]-1,1[$ and we define
\eqn
&&\Phi\cl U \isoto E,\quad \Phi(x'_1,\ldots,x'_d,x'')=(\phi(x'_1),\ldots,\phi(x'_d),x'').
\eneqn
\begin{lemma}\label{lem:cone-chgt-coord}
In the preceding situation, consider two closed convex proper cones
$\gamma_0\subset E''$ and $C_1\subset E^*$ such that
$C_1 \subset (E'^*\times \Int(\gamma_0^\circ))\cup \{(0,0)\}$.
Then there exists another closed convex proper cone $C_2\subset E^*$ such that
$C_2 \subset (E'^*\times \Int(\gamma_0^\circ))\cup \{(0,0)\}$ and
\eqn
&&\Phi_\pi \opb{\Phi_d} (U\times C_1 )\subset E\times C_2.
\eneqn
\end{lemma}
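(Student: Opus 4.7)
The plan is to construct $C_2$ explicitly as the closed convex cone in $E^*$ generated by all rescalings of $C_1$ produced by $\opb{\Phi_d}$. In coordinates, $d\Phi(x)$ is the diagonal matrix with entries $(d\phi(x'_1),\dots,d\phi(x'_d),1,\dots,1)$, all $\ge 1$, so
\eqn
&& \Phi_\pi\opb{\Phi_d}(U\times C_1) = \{(\Phi(x),\,(d\Phi(x))^{-T}\zeta)\,;\, x\in U,\ \zeta\in C_1\},
\eneqn
and $(d\Phi(x))^{-T}$ lies in the monoid $K\subset\End(E^*)$ of diagonal maps $\operatorname{diag}(t_1,\dots,t_d,1,\dots,1)$ with $t_i\in(0,1]$. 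Hence the second-factor component of the image is contained in $\bigcup_{A\in K} A(C_1)$, and it suffices to find a closed convex proper cone $C_2\subset (E'^*\times\Int(\gamma_0^\circ))\cup\{0\}$ containing this union. I would set $C_2$ to be the closure of the convex cone generated by $\bigcup_{A\in K} A(C_1)$, which is automatically closed, convex and $\R^+$-stable.

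Let $\pi''\cl E^*\to E''^*$ denote the projection, and note that every $A\in K$ acts trivially on the $E''^*$-factor, so the generating set projects onto $\pi''(C_1)$. The first auxiliary step is the estimate $|\pi''(\zeta)|\ge c|\zeta|$ for all $\zeta\in C_1$ and some $c>0$: it follows from the hypothesis $C_1\setminus\{0\}\subset E'^*\times\Int(\gamma_0^\circ)$ together with $0\notin\Int(\gamma_0^\circ)$ and compactness of $C_1\cap S(E^*)$. Consequently $\pi''|_{C_1}$ is proper, so $\pi''(C_1)$ is a closed convex cone; being contained in the proper cone $\Int(\gamma_0^\circ)\cup\{0\}$, it is itself proper. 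Since $\pi''$ is continuous and commutes with the convex-cone hull operation, one obtains $\pi''(C_2)\subset\overline{\pi''(C_1)}=\pi''(C_1)\subset\Int(\gamma_0^\circ)\cup\{0\}$.

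The crux of the argument, and the main obstacle, is to show that $\eta\in C_2$ with $\pi''(\eta)=0$ forces $\eta=0$. Granted this, both the desired inclusion $C_2\setminus\{0\}\subset E'^*\times\Int(\gamma_0^\circ)$ and properness of $C_2$ follow at once, since any $\eta\in C_2\cap(-C_2)$ projects into $\pi''(C_1)\cap(-\pi''(C_1))=\{0\}$. To prove the claim, by Carathéodory every point of the convex cone generated by $\bigcup_{A\in K} A(C_1)$ is of the form $\sum_{i=1}^N \lambda_i A_i\zeta_i$ with $N=\dim E^*+1$ fixed, $\lambda_i\ge 0$, $A_i\in K$, $\zeta_i\in C_1$; I would write $\eta$ as a limit of such expressions $\eta^{(n)}=\sum_{i=1}^N \lambda_i^{(n)}A_i^{(n)}\zeta_i^{(n)}$. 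Picking a linear functional $\ell$ on $E''^*$ strictly positive on $\pi''(C_1)\setminus\{0\}$, one has $|v|\le C\,\ell(v)$ on $\pi''(C_1)$ for some $C>0$ (by homogeneity and compactness on the unit sphere), so
\eqn
&&\lambda_i^{(n)}|\pi''(\zeta_i^{(n)})|\le C\,\ell\bigl(\lambda_i^{(n)}\pi''(\zeta_i^{(n)})\bigr)\le C\,\ell\bigl(\pi''(\eta^{(n)})\bigr)\to 0.
\eneqn
Combined with $|\zeta_i^{(n)}|\le c^{-1}|\pi''(\zeta_i^{(n)})|$ and $\|A_i^{(n)}\|\le 1$, this yields $\lambda_i^{(n)}A_i^{(n)}\zeta_i^{(n)}\to 0$ for each $i$, hence $\eta=0$, completing the proof.
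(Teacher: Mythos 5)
Your proof is correct, but it takes a genuinely different route from the paper's. The paper first notes that a closed cone of $E^*$ is contained in $(E'^*\times\Int(\gamma_0^\circ))\cup\{(0,0)\}$ if and only if it is contained in some $C_{a,D}\eqdot\R_{\geq 0}\cdot([-a,a]^d\times D)$ with $a>0$ and $D\subset\Int(\gamma_0^\circ)$ compact; since in coordinates the transposed inverse differentials of $\Phi$ are the diagonal contractions of your monoid $K$, which shrink the $E'^*$-coordinates and fix the $E''^*$-coordinates, each $C_{a,D}$ is stable under them, and the paper simply takes $C_2=C_{a,D}\supset C_1$. You instead take the \emph{smallest} admissible $C_2$, the closed convex conical hull of the orbit $\bigcup_{A\in K}A(C_1)$, and then must check by hand that it is proper and meets $\{\pi''=0\}$ only at the origin --- which is exactly where your estimate $|\pi''(\zeta)|\geq c|\zeta|$ on $C_1$, the strictly positive functional $\ell$ on $\pi''(C_1)$, and the Carath\'eodory-plus-limit argument come in; I checked these steps and they hold (the key point being that $\ell\circ\pi''$ is additive over your conic combinations and each $A_i^{(n)}$ has norm $\leq 1$). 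The paper's enlargement buys convexity, closedness, properness and stability essentially for free at the cost of a less canonical $C_2$; your construction is minimal but requires the salience verification. One small caveat common to both arguments: they implicitly assume $\gamma_0\neq\{0\}$, i.e.\ $0\notin\Int(\gamma_0^\circ)$ (your properness of $\pi''(C_1)$ and lower bound $|\pi''(\zeta)|\geq c|\zeta|$ use this), which is satisfied wherever the lemma is applied, and both dispose of $\Int(\gamma_0^\circ)=\emptyset$ as trivial.
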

\begin{proof}
(i) We assume that $\Int(\gamma_0^\circ)$ is non empty (otherwise the lemma 
is trivial).  Then a closed cone of $E^*$ is contained in
$(E'^*\times \Int(\gamma_0^\circ))\cup \{(0,0)\}$ if and only if it is
contained in $C_{a,D} \eqdot \R_{\geq 0}\cdot ([-a,a]^d\times D)$ for some
$a>0$ and some compact subset $D \subset \Int(\gamma_0^\circ)$.
Hence we may assume $C_1 = C_{a,D}$.

\medskip\noindent
(ii) Denote by $(x';\xi')$ the coordinates on $\R^d\times(\R^d)^*$.  We may
assume that $E''=\R^{m}$ and we denote by $(x'';\xi'')$ the coordinates on
$E''\times(E'')^*$. The change of coordinates $\Phi$ defined by
$y'_i = \phi(x'_i)$ ($i=1,\dots,d$), $y''= x''$ associates the coordinates
$(y;\eta)=(y',y'';\eta',\eta'')$ to the coordinates
$(x'_1,\dots,x'_d,x''$; $\xi'_1,\dots,\xi'_d,\xi'')$ with
\eqn
&&y'_i=\phi(x'_i),\quad \eta'_i= d\phi^{-1}(x'_i)\cdot\xi'_i,
\quad  (i=1,\dots,d),\\
&& y''=x'',\quad \eta''=\xi''.
\eneqn
Since $d\phi(t) \geq 1$, we get that
$\Phi_\pi \opb{\Phi_d}(U\times C_{a,D})\subset E\times C_{a,D}$
and we may choose $C_2=C_{a,D}$.
\end{proof}
\begin{theorem}\label{th:cutoff}
Let $F\in\Derb_{Z_\gamma}(\cor_E)^{\perp,l}$. We assume that there exists
$A \subset T^*M\times V^*$ such that
\bnum
\item $A$ is a closed strict $\gamma$-cone
\lp see Definition~\ref{def:strictcone}\rp,
\item $\SSi(F)\cap U_\gamma \subset \opb{\piw_E}(A)$.
\enum
Then $\SSi(F)\subset (\SSi(F)\cap U_\gamma) \cup T^*_EE$.
\end{theorem}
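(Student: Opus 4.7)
The strategy will be to reduce to the vector-space case of Lemma~\ref{le:cutoff0} by flattening the base $M$ via the coordinate-stretching diffeomorphism of Lemma~\ref{lem:cone-chgt-coord}. Since the conclusion is pointwise in $T^*E$, it will suffice to verify the inclusion over a neighborhood of an arbitrary $x_0\in M$ (and I may assume $\Int\gamma_0^\circ\neq\emptyset$, otherwise $U_\gamma=\emptyset$, $Z_\gamma=T^*E$, and $F\simeq\LG F$ forces $F=0$). I choose a chart identifying a small neighborhood of $x_0$ with $M'=\,]-1,1[^d\subset\R^d$ and replace $F$ by its restriction to $M'\times V$; both the localized-category condition $F\simeq\LG F$ and the microsupport bound $\SSi(F)\cap U_\gamma\subset\opb{\piw_E}(A)$ are preserved under restriction to an open subset since fiberwise $V$-convolution commutes with restriction in the $M$-direction.

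The first step will be to promote the pointwise strict-$\gamma$-cone condition on $A$ to a uniform bound. After shrinking $M'$ if necessary, I will build a closed convex proper cone $K\subset(\R^d)^*\times V^*$ with $K\subset((\R^d)^*\times\Int\gamma_0^\circ)\cup\{0\}$ such that $A\subset M'\times K$. Concretely, over a relatively compact closed base I slice $A$ by $\{|\eta|=1\}$ to obtain a compact subset of $(\R^d)^*\times\Int\gamma_0^\circ$ (using closedness of $A$, its $\R^+$-conicity, and the strict-$\gamma$-cone property, which together force $|\xi|/|\eta|$ and $d(\eta/|\eta|,\partial\gamma_0^\circ)^{-1}$ to be bounded over the compact base), take its compact convex hull, and cone off by $\R^+$; properness of $K$ is automatic since $\Int\gamma_0^\circ\cup\{0\}$ contains no line when $\gamma_0\neq\{0\}$.

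I then apply Lemma~\ref{lem:cone-chgt-coord} with $E'=\R^d$, $E''=V$ and $C_1=K$ to obtain a closed convex proper cone $C_2\subset(\R^d)^*\times V^*$ of the same strict type together with a diffeomorphism $\Phi\cl M'\times V\to\R^d\times V$ of the form $\Phi(x,v)=(\phi(x),v)$, satisfying $\Phi_\pi\opb{\Phi_d}(M'\times V\times K)\subset(\R^d\times V)\times C_2$. Setting $\tw F\eqdot\reim{\Phi}F$, and using that $\Phi$ is a diffeomorphism so $\SSi(\tw F)=\Phi_\pi\opb{\Phi_d}\SSi(F)$, the hypothesis on $\SSi(F)\cap U_\gamma$ yields $\SSi(\tw F)\cap\tw U_\gamma\subset(\R^d\times V)\times C_2$, where $\tw U_\gamma\eqdot T^*\R^d\times V\times\Int\gamma_0^\circ$. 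Now regard $\tw F$ as a sheaf on the vector space $\R^d\times V$ equipped with the closed convex proper cone $\tw\gamma_0\eqdot\{0\}\times\gamma_0$, whose polar has interior $(\R^d)^*\times\Int\gamma_0^\circ$. The key point is that $\cor_{\tw\gamma_0}\simeq\cor_{\{0\}}\boxtimes\cor_{\gamma_0}$ and convolution with $\cor_{\{0\}}$ in the $\R^d$-direction is the identity; hence convolution on $\R^d\times V$ with $\cor_{\tw\gamma_0}$ coincides with fiberwise $V$-convolution by $\cor_{\R^d\times\gamma_0}$. Since $\Phi=\phi\times\id_V$ commutes with this fiberwise convolution, $F\simeq\LG F$ transports to $\tw F\simeq\cor_{\tw\gamma_0}\star\tw F$, placing $\tw F$ in $\Derb_{Z_{\tw\gamma_0}}(\cor_{\R^d\times V})^{\perp,l}$.

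Both hypotheses of Lemma~\ref{le:cutoff0} are then in force with $C_2$ playing the role of the cone~``$A$'', giving $\SSi(\tw F)\subset((\R^d\times V)\times C_2)\cup T^*_{\R^d\times V}(\R^d\times V)$. Because $\Phi$ acts trivially on the $V^*$-coordinate, pulling back through $\Phi$ shows that every point of $\SSi(F)$ over $M'\times V$ outside the zero section has its $V^*$-component in $\Int\gamma_0^\circ$, hence lies in $U_\gamma$; combined with $\SSi(F)\cap U_\gamma\subset\opb{\piw_E}(A)$ this gives $\SSi(F)\subset(\SSi(F)\cap U_\gamma)\cup T^*_EE$ over $M'\times V$, and the global statement follows as $x_0$ was arbitrary. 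The main technical obstacle is the first step, namely extracting from the pointwise strict-$\gamma$-cone definition a closed convex \emph{proper} cone $K$ uniformly bounding $A$ over a compact base, which is what makes Lemma~\ref{lem:cone-chgt-coord} directly applicable.
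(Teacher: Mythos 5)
Your proposal is correct and follows essentially the same route as the paper: localize over a compact neighborhood in a chart, replace the pointwise strict-$\gamma$-cone bound by a uniform closed convex proper cone, straighten $]-1,1[^d$ to $\R^d$ via Lemma~\ref{lem:cone-chgt-coord}, and conclude with Lemma~\ref{le:cutoff0} applied to $\R^d\times V$ with the cone $\{0\}\times\gamma_0$. The only difference is that you spell out details the paper leaves implicit (the compactness argument producing the uniform cone $K$, and the verification that $F\simeq\LG F$ and the microsupport bounds transport correctly under $\Phi$), and these details check out.
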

\begin{proof}
Since the statement is local on $M$ we may assume that $M$ is an open 
subset of a vector space $W$.
Then $A$ is a closed  subset of $M\times W^* \times V^*$. For any
$x\in M$, $A_x\eqdot A \cap (\{x\} \times W^* \times V^*)$ is a cone
satisfying
\eqn
&&A_x \subset (W^* \times \Int(\gamma_0^\circ)) \cup \{(0,0)\}.
\eneqn
For $x_0\in M$ and for a given compact neighborhood $C$ of $x_0$ we may assume that there
exists a closed convex cone $B$ of $W^* \times V^*$ such that
$A_x \subset B$ for any $x\in C$ and
\eqn
&&B \subset (W^* \times \Int(\gamma_0^\circ)) \cup \{(0,0)\}.
\eneqn
We may assume $x_0=0\in W$. 
We choose an isomorphism $W\simeq \R^d$ so that $]-1,1[^d\subset C$.
Then we apply a change of coordinates as in Lemma~\ref{lem:cone-chgt-coord},
with $E'=W$, $E''=V$, $C_1=B$, and we are reduced to Lemma~\ref{le:cutoff0}
applied to the vector space $W\times V$ and the cone
$\gamma = \{0\}\times \gamma_0$.
\end{proof}

\subsubsection*{A separation theorem}

The next result is a slight generalization of Tamarkin's Theorem~\cite[Th.~3.2]{Ta}. 
In this statement and its proof, we write $\piw$ instead of $\piw_E$ for short.
\begin{theorem}{\rm (The separation theorem.)}\label{th:separation}  
Let $A,B$ be two closed strict $\gamma$-cones in $T^*M\times V^*$.
Let $F\in\Derb_{ \opb{\piw}(A)}(\cor_E;{U_\gamma})$ and  
$G\in\Derb_{ \opb{\piw}(B)}(\cor_E;{U_\gamma})$. 
Assume that $A\cap B \subset T^*_MM\times \{0\}$ and that the projection
$q_2 \cl M\times V \to V$ is proper on the set
$\{(x,v_1-v_2);\, (x,v_1) \in \supp G, (x,v_2)\in \supp F \}$.
Then 
$$
\roim{q_2} \rhomc(\lG(F), \lG(G)) \simeq 0 ,
$$
where $\lG$ is defined in Proposition~\ref{pro:Piota}.
In particular $\Hom[\Derb(\cor_E;{U_\gamma})](F,G)\simeq0$.
\end{theorem}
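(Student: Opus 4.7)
The plan is to exploit the separation hypothesis to force $\roim{q_2}\rhomc(\lG F,\lG G)$ to be \emph{a priori} a constant sheaf on $V$, and then to identify this constant with $\rsect(E;\rhomc(\lG F,\lG G))$ and show separately that the latter vanishes by a direct convolution calculation.

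Write $F'=\lG F$, $G'=\lG G$ and $H=\rhomc(F',G')$. By Theorem~\ref{th:cutoff} one has $\SSi(F')\subset\opb{\piw_E}(A)\cup T^*_EE$ (and similarly for $G'$), and Proposition~\ref{pro:SSconv} then gives $\SSi(H)\subset\SSi(G')\hatstar\SSi(F')^\alpha$. The ``in particular'' part of Lemma~\ref{lem:star-gamma-cones}, which is precisely where the separation hypothesis $A\cap B\subset T^*_MM\times\{0\}$ is used, refines this to the key estimate
\[
\SSi(H)\cap(T^*_MM\times T^*V)\subset T^*_EE.
\]
Combined with the properness of $q_2$ on $\supp H$ and Theorem~\ref{th:opboim}(ii), this forces $\SSi(\roim{q_2}H)\subset T^*_VV$, so $\roim{q_2}H$ has locally constant cohomology. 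Contractibility of $V$ together with Corollary~\ref{cor:opbeqv} applied to $M=\rmpt$, $I=V$ then upgrades this to an isomorphism $\roim{q_2}H\simeq L_V$, where $L\simeq\rsect(V;\roim{q_2}H)\simeq\rsect(E;H)$.

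To finish, I show $\rsect(E;H)=0$ independently of the separation hypothesis. By the $(\star,\rhomc)$-adjunction~\eqref{eq:adjstarhomc}, $\rsect(E;H)\simeq\RHom(\cor_E\star F',G')$; writing $F'=\cor_\gamma\star F$ and using associativity of $\star$ (Proposition~\ref{pro:convconv}), it suffices to verify $\cor_E\star\cor_\gamma=0$. The stalk at $(x,w)\in E$ of $\cor_E\star\cor_\gamma\simeq\reim{s}\cor_{M\times V\times\gamma_0}$ is, by the formula $(\reim{s}G)_{(x,w)}\simeq\rsect_c(s^{-1}(x,w);G)$, equal to $H^*_c(\gamma_0;\cor)$. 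This vanishes because the one-point compactification $(\gamma_0)^+$ is contractible: picking any $v_0\in\gamma_0\setminus\{0\}$ and any $\ell\in\Int(\gamma_0^\circ)$, the homotopy $h_t(v)=v+\tan(\pi t/2)v_0$ deformation-retracts $(\gamma_0)^+$ onto $\infty$, with continuity at $t=1$ guaranteed uniformly in $v\in\gamma_0$ by $\ell(v+\tan(\pi t/2)v_0)\geq\tan(\pi t/2)\ell(v_0)\to\infty$ --- crucially using that $\gamma_0$ is a proper (pointed) cone. The ``in particular'' Hom vanishing follows via~\eqref{eq:hominDgamma}. The most delicate point is the microsupport bookkeeping in the first step --- combining Theorem~\ref{th:cutoff} with Lemma~\ref{lem:star-gamma-cones} correctly; the final convolution-vanishing is then surprisingly clean and makes no use of the hypothesis $A\cap B\subset T^*_MM\times\{0\}$.
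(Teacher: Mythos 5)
Your proof is correct. The microsupport bookkeeping at the start coincides with the paper's: writing $H=\rhomc(\lG F,\lG G)$, Theorem~\ref{th:cutoff} upgrades the bound on $\SSi(\lG F)$, $\SSi(\lG G)$ from $U_\gamma$ to all of $T^*E$, Proposition~\ref{pro:SSconv} together with Lemma~\ref{lem:star-gamma-cones} (which is where $A\cap B\subset T^*_MM\times\{0\}$ enters) gives $\SSi(H)\cap(T^*_MM\times T^*V)\subset T^*_EE$, and the properness hypothesis then forces $\SSi(\roim{q_2}H)\subset T^*_VV$. You then diverge in the final step. The paper observes that $\roim{q_2}H$ lies in $\Derb_{Z_{\gamma_0}}(\cor_V)^{\perp,r}$ (by the $(\opb{q_2},\roim{q_2})$ adjunction applied to $H\in\Derb_{Z_\gamma}(\cor_E)^{\perp,r}$), and since $T^*_VV\subset Z_{\gamma_0}$ the microsupport bound also places $\roim{q_2}H$ in $\Derb_{Z_{\gamma_0}}(\cor_V)$, so it is orthogonal to itself and therefore zero. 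You instead identify $\roim{q_2}H$ with a constant sheaf $L_V$ (using contractibility of $V$ via Corollary~\ref{cor:opbeqv}), compute $L\simeq\rsect(E;H)\simeq\RHom(\cor_E\star\lG F,\lG G)$, and reduce to the vanishing $\cor_E\star\cor_\gamma\simeq 0$, which you prove by an explicit contraction of the one-point compactification of $\gamma_0$ onto the point at infinity. Both routes are valid; the paper's is shorter and stays inside the localization formalism, while yours makes transparent that the separation hypothesis serves only to make $\roim{q_2}H$ constant, not to kill the constant. One small economy: your vanishing $\cor_E\star\cor_\gamma\simeq 0$ is a special case of Proposition~\ref{pro:conv3}~(b), since $\SSi(\cor_E)=T^*_EE\subset Z_\gamma$ places $\cor_E$ in $\Derb_{Z_\gamma}(\cor_E)$, which $\LG=\cor_\gamma\star(\cdot)$ annihilates; citing this would spare you the direct compactly-supported cohomology computation for $\gamma_0$.
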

\begin{proof}
We set $L = \rhomc(\lG(F), \lG(G))$ and $L' = \roim{q_2}L$.
By~\eqref{eq:rhomcperpr} we have $L \in\Derb_{Z_\gamma}(\cor_E)^{\perp,r}$.
By adjunction between $\roim{q_2}$ and $\opb{q_2}$ we deduce
$L' \in\Derb_{Z_{\gamma_0}}(\cor_V)^{\perp,r}$. 
It remains to check that
$\SSi(L') \subset Z_{\gamma_0}$. 

By Theorem~\ref{th:cutoff} we have
$\SSi(F)\subset \opb{\piw}(A)$ and $\SSi(G)\subset \opb{\piw}(B)$.
Then Proposition~\ref{pro:SSconv} gives
$\SSi(L) \subset \opb{\piw}(A)\hatstar (\opb{\piw}(B))^\alpha$.
Applying Lemma~\ref{lem:star-gamma-cones} we get 
\eqn
&&\SSi(L) \cap (T^*_MM \times T^*V) \subset T^*_EE.
\eneqn
Using Lemma~\ref{lem:convhom2}, the hypothesis implies that $q_2$ is proper on
$\supp L$. We deduce $\SSi(L') \subset T^*_VV$
and thus $L'\simeq 0$.

\medskip
\noindent
Proposition~\ref{pro:Piota} and  Lemma~\ref{le:secthomc} give
the first two isomorphisms in the sequence
\begin{multline*}
\Hom[\Derb(\cor_E;{U_\gamma})](F,G)
\simeq  \Hom[\Derb(\cor_E)](F, G)   \\
\simeq   \Hom[\Derb(\cor_E)](\cor_\gamma,L)
\simeq   \Hom[\Derb(\cor_V)](\cor_{\gamma_0},L') \simeq 0,
\end{multline*}
which proves the last assertion.
\end{proof}

\subsubsection*{Kernels}
We consider $E=M\times V$, $\gamma = M\times \gamma_0$ and a kernel $K\in
\Derb(\cor_{E\times E})$.  We introduce the coordinates
$(x,y,x',y';\xi,\eta,\xi',\eta')$ on $T^*(E\times E)$ and we make the
following hypothesis
\eq\label{eq:hyp-kernel}
\SSi(K) \subset \{ \eta + \eta' =0 \} .
\eneq
We recall that $\LG \simeq \cor_{\gamma^+} \conv \cdot$, where
$\gamma^+ \subset E\times E$ is defined in~\eqref{eq:gamma+}.
\begin{proposition}\label{prop:action-kernel}
Let $K\in \Derb(\cor_{E\times E})$ which satisfies~\eqref{eq:hyp-kernel}.
Then $K \conv \cor_{\gamma^+} \simeq \cor_{\gamma^+} \conv K$.
In particular $K \conv \cdot$ sends $\Derb(\corgl_M)$ into itself.
Moreover $\SSi(K) \aconv \{\eta <0\} \subset \{\eta <0\}$ and
$\SSi(K) \aconv \{\eta \geq 0\} \subset \{\eta \geq 0\}$.
\end{proposition}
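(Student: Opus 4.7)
The microsupport inclusions $\SSi(K)\aconv\{\eta<0\}\subset\{\eta<0\}$ and $\SSi(K)\aconv\{\eta\geq 0\}\subset\{\eta\geq 0\}$ are immediate from~\eqref{eq:convolution_of_sets} and~\eqref{eq:hyp-kernel}: a point $(x_1;\xi_1,\eta_1)$ in such an $\aconv$ arises from a point $((x_1,x_2);\xi_1,\eta_1,\xi_2,\eta_2)\in\SSi(K)$ (so $\eta_1+\eta_2=0$) paired with $(x_2;-\xi_2,-\eta_2)\in B$, and the identity $\eta_1=-\eta_2$ transfers any sign condition on $-\eta_2$ directly to $\eta_1$. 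The identical reasoning, applied with $B=Z_\gamma=\{\eta\notin\Int\gamma_0^\circ\}$, combined with the bound $\SSi(K\conv F)\subset\SSi(K)\aconv\SSi(F)$ from Proposition~\ref{pro:SSconv}, shows that $K\conv(-)\cl\Derb(\cor_E)\to\Derb(\cor_E)$ sends $\Derb_{Z_\gamma}(\cor_E)$ into itself.

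The heart of the argument is to show that $K\conv(-)$ also preserves the left orthogonal $\Derb(\corgl_M)=\Derb_{Z_\gamma}(\cor_E)^{\perp,l}$; this will simultaneously yield the ``in particular'' assertion and, via uniqueness of the projector, the commutativity. My plan is to use the right adjoint of $K\conv(-)$, namely $K^!(H)\eqdot\roim{q_2}\rhom(K,\epb{q_1}H)$, and show $K^!$ preserves $\Derb_{Z_\gamma}(\cor_E)$. Combining the standard bound $\SSi(\epb{q_1}H)\subset\{\xi_2=0,\,\eta_2=0,\,(x_1,v_1;\xi_1,\eta_1)\in\SSi(H)\}$ with Theorem~\ref{th:opboim2} ($\SSi(\rhom(K,\epb{q_1}H))\subset\SSi(\epb{q_1}H)\hatp\SSi(K)^a$) and Theorem~\ref{th:nonpropdirim} for $\roim{q_2}$, the projection condition $\xi_1=\eta_1=0$ combined with $\eta_1'+\eta_2'=0$ in $\SSi(K)^a$ forces $\eta_2=\eta_2'=\eta_1$, so $\eta_2\notin\Int\gamma_0^\circ$ whenever $H\in\Derb_{Z_\gamma}$. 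The adjunction $\Hom(K\conv F,H)\simeq\Hom(F,K^!H)$ then gives $K\conv F\in\Derb_{Z_\gamma}^{\perp,l}$ whenever $F$ is.

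Finally, applying $K\conv(-)$ to the projector triangle $\LG F\to F\to C\to[+1]$ (with $C\in\Derb_{Z_\gamma}$ coming from $\cor_{\gamma^+}\to\cor_\Delta$) produces the triangle $K\conv\LG F\to K\conv F\to K\conv C\to[+1]$, in which the first term lies in $\Derb_{Z_\gamma}^{\perp,l}$ and the third in $\Derb_{Z_\gamma}$ by the previous steps. By uniqueness in Proposition~\ref{pro:Piota}, this is the projector triangle for $K\conv F$, so $\LG(K\conv F)\simeq K\conv\LG F$ naturally in $F$. Under the identification $\LG\simeq\cor_{\gamma^+}\conv(-)$ from~\eqref{eq:Pgamma-bis}, this is precisely the natural isomorphism $\cor_{\gamma^+}\conv K\conv(-)\simeq K\conv\cor_{\gamma^+}\conv(-)$; passing to kernels, say by evaluating on skyscrapers $F=\delta_{(x',v')}$ (using $L\conv\delta_{(x',v')}\simeq L|_{E\times\{(x',v')\}}$), recovers the required isomorphism of kernels $\cor_{\gamma^+}\conv K\simeq K\conv\cor_{\gamma^+}$. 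The main obstacle is the adjoint microsupport bookkeeping in the second paragraph: the Whitney sum $\hatp$ is potentially larger than the pointwise sum, but the hypothesis $\eta+\eta'=0$ precisely lines up the $V^*$-components on the two factors of $K$, so that the sign condition on $\SSi(H)$ propagates coherently through $\rhom$ and $\roim{q_2}$.
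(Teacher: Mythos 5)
Your first paragraph (the set-theoretic inclusions $\SSi(K)\aconv\{\eta<0\}\subset\{\eta<0\}$, etc.) is correct and is exactly the paper's one-line observation. The rest of the argument, however, has a genuine gap at its foundation. Both of your key steps --- that $K\conv(-)$ preserves $\Derb_{Z_\gamma}(\cor_E)$, and that the right adjoint $K^!(H)=\roim{q_2}\rhom(K,\epb{q_1}H)$ preserves $\Derb_{Z_\gamma}(\cor_E)$ --- rest on microsupport estimates for the direct images $\reim{q_1}$ and $\roim{q_2}$ along the projections $E\times E\to E$. These projections are not proper on $\supp K$ (no support condition is assumed on $K$), so the estimate~\eqref{eq:convolution_of_kern} does not apply: it requires hypothesis~\eqref{eq:noncharker}(i). (Note also that the bound $\SSi(K\conv F)\subset \SSi(K)\aconv\SSi(F)$ is not Proposition~\ref{pro:SSconv}, which concerns $\star$ and $\rhomc$.) Nor does the non-proper direct image Theorem~\ref{th:nonpropdirim} apply, since $q_1,q_2\cl M\times V\times M\times V\to M\times V$ project out a copy of the base $M$ and are therefore not constant linear maps of trivial vector bundles over a fixed base. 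Without some such input, microsupport estimates for non-proper direct images are simply false in general, so your steps (a) and (b) are unjustified as written. The reason the estimates \emph{are} true here is precisely the point your proof never establishes: hypothesis~\eqref{eq:hyp-kernel} together with Corollary~\ref{cor:Morse} implies that $K$ descends along the difference map $\sigma\cl(x,y,x',y')\mapsto(x,x',y-y')$, i.e.\ $K\simeq\opb{\sigma}K'$ with $K'=\roim{\sigma}K$; after this descent the relevant direct images become fiberwise-linear over $M\times M$ and Theorem~\ref{th:nonpropdirim} becomes available.

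Once one has the descent $K\simeq\opb{\sigma}K'$, the whole elaborate orthogonality-plus-uniqueness machinery is unnecessary: since $\cor_{\gamma^+}\simeq\opb{\sigma}(\cor_{M\times M\times\gamma_0})$ as well, a base change identifies both $K\conv\cor_{\gamma^+}$ and $\cor_{\gamma^+}\conv K$ with $\opb{\sigma}(K'\star\cor_{M\times M\times\gamma_0})$, and the commutativity of $\star$ in the $V$-variable gives the kernel isomorphism directly; this is the paper's proof, and the ``in particular'' then follows by applying both sides to $F$ with $\LG F\isoto F$. Your final step has a further (more minor) flaw in the same direction: a natural isomorphism of the functors $\cor_{\gamma^+}\conv K\conv(-)\simeq K\conv\cor_{\gamma^+}\conv(-)$ evaluated on skyscrapers yields only isomorphisms of the restrictions $L|_{E\times\{e'\}}$ for each $e'$, which do not assemble into an isomorphism of the kernels themselves --- the kernel-to-functor assignment is not full, so recovering the statement actually claimed in the proposition requires an argument at the level of kernels, which again the descent provides.
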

\begin{proof}
We define the projection
$\sigma\cl M\times V \times M \times V \to M\times M \times V$
as the product of $\id_{M\times M}$ with $\sigma_0\cl V\times V \to V$,
$(y,y') \mapsto y-y'$.
Then the hypothesis~\eqref{eq:hyp-kernel} and Corollary~\ref{cor:Morse}
give $K\simeq \opb{\sigma}(K')$, where $K' = \roim{\sigma}(K)$.
We also have by definition
$\cor_{\gamma^+} \simeq \opb{\sigma}(\cor_{M\times M \times \gamma_0})$.
The base change formula applied to the Cartesian square
\eqn
&&\xymatrix@C=2cm{
V\times V\times V \ar[d]_{\sigma_0 \circ q_{12} \times \sigma_0 \circ q_{23}}
\ar[r]^{q_{13}}
& V\times V \ar[d]^{\sigma_0}  \\
 V\times V \ar[r]^s &  V}
\eneqn
gives the first and third isomorphisms below:
\eqn
K \conv \cor_{\gamma^+}
\simeq \opb{\sigma}(K'\star \cor_{M\times M \times \gamma_0})
\simeq \opb{\sigma}(\cor_{M\times M \times \gamma_0} \star K')
\simeq \cor_{\gamma^+} \conv K.
\eneqn
The last assertion follows from the hypothesis~\eqref{eq:hyp-kernel}.
\end{proof}

\section{The Tamarkin category}
\label{section:tam1}
We particularize the preceding results to the case where $V=\R$ and
$\gamma_0=\{t\in\R;t\geq0\}$. 
Hence, with the notations of~\eqref{eq:UCgamma}, we have $U_\gamma=\{\tau>0\}$.
As in Section~\ref{section:GKS} 
we denote by  $T^*_{\{\tau>0\}}(M\times\R)$ the open subset $\{\tau>0\}$ of
$T^*(M\times\R)$ and we define the map 
\eq\label{eq:rho2}
&&\rho\cl T^*_{\{\tau>0\}}(M\times\R)\to T^*M,\quad (x,t;\xi,\tau)\mapsto
(x;\xi/\tau).
\eneq
We also use Notations~\ref{not:Derbgamma}. Moreover, for a closed subset $A$
of $T^*M$ we set
$$
\Derb_{A}(\cort_M)\eqdot\Derb_{\opb{\rho}(A)}(\cor_{M\times\R};\{\tau>0\}).
$$
\begin{lemma}\label{lem:Acompact}
Let $A \subset T^*M$ and $F\in\Derb_A(\cort_M)$.
Let $A' \subset T^*M \times \R$ be given by
$A' = \{(x;\xi,\tau); \tau>0, \; (x;\xi/\tau) \in A\}$
and consider $F$ as an object of $\Derb(\corgl_M)$.
Assume that $\pi_M$ is proper on $A$.
Then $\ol{A'}$ is a strict $\gamma$-cone and
$\SSi(F)\subset \opb{\piw}(\ol{A'})$. In particular
$\supp(F) \subset \pi_M(A) \times \R$.
\end{lemma}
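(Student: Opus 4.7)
The plan is to pass $F$ to its canonical representative in $\Derb(\corgl_M)$, apply the cut-off Theorem~\ref{th:cutoff} to obtain a first estimate on $\SSi(F)$, and then refine it by a local vanishing argument on $M$ that pins down $\supp(F)$.

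The preliminary check is that $\overline{A'}$ is a closed strict $\gamma$-cone (here $\gamma_0 = \{t\geq 0\}$, so $\Int(\gamma_0^\circ) = \{\tau>0\}$). Since $A' \subset T^*M \times \{\tau>0\}$, the only worry is a boundary point $(x_0;\xi_0,0) \in \overline{A'}$ with $\xi_0\neq 0$. Such a point is the limit of a sequence $(x_n;\xi_n,\tau_n) \in A'$ with $\tau_n\to 0^+$ and $(x_n;\xi_n/\tau_n)\in A$; the properness of $\pi_M|_A$ makes $A\cap\opb{\pi_M}(K)$ compact for any compact neighborhood $K$ of $x_0$, so $\xi_n/\tau_n$ stays bounded and $\xi_n=\tau_n\cdot(\xi_n/\tau_n)\to 0$, forcing $\xi_0=0$. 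With this in hand, the hypothesis $F\in\Derb_A(\cort_M)$ reads $\SSi(F)\cap U_\gamma\subset\opb{\rho}(A)=\opb{\piw_E}(A')\subset\opb{\piw_E}(\overline{A'})$, and Theorem~\ref{th:cutoff} immediately produces $\SSi(F)\subset\opb{\piw_E}(\overline{A'})\cup T^*_E E$.

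To upgrade this to $\SSi(F)\subset\opb{\piw_E}(\overline{A'})$, I control $\supp(F)$ first. Fix $x_0\in M\setminus\pi_M(A)$; since $\pi_M(A)$ is closed (again by properness), take an open $U\ni x_0$ disjoint from $\pi_M(A)$. Then $\opb{\rho}(A)\cap T^*(U\times\R)=\emptyset$, so $\SSi(F|_{U\times\R})\cap U_\gamma=\emptyset$, i.e., $F|_{U\times\R}\in\Derb_{Z_{\gamma|_U}}(\cor_{U\times\R})$. Because $\LG$ is convolution in the $\R$-direction, base change along the open inclusion $U\hookrightarrow M$ shows it commutes with restriction in $M$, so the isomorphism $F\simeq\LG F$ restricts to $F|_{U\times\R}\simeq L_{\gamma|_U}(F|_{U\times\R})$; hence $F|_{U\times\R}\in\Derb(\corgl_U)$. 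Combining the two memberships, Proposition~\ref{pro:conv3} forces $F|_{U\times\R}\simeq 0$, and therefore $\supp(F)\subset\pi_M(A)\times\R$. To close the loop, observe that for every $x\in\pi_M(A)$ and any $(x;\xi)\in A$ the sequence $(x;\xi/n,1/n)\in A'$ converges to $(x;0,0)$, so $(\pi_M(A)\times\R)\times\{0\}\subset\opb{\piw_E}(\overline{A'})$; together with the support bound this gives $\SSi(F)\cap T^*_E E\subset\opb{\piw_E}(\overline{A'})$, and with the cut-off output one concludes $\SSi(F)\subset\opb{\piw_E}(\overline{A'})$.

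The genuinely delicate point is that Theorem~\ref{th:cutoff} only controls $\SSi(F)$ modulo the zero section $T^*_E E$, so the main obstacle is to absorb the zero-section piece into $\opb{\piw_E}(\overline{A'})$. This is precisely what the localization/vanishing argument of the previous paragraph accomplishes: it couples the support estimate to the microsupport estimate and turns the cut-off bound into the sharp statement.
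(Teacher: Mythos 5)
Your proof is correct and follows essentially the same route as the paper: verify that $\ol{A'}$ is a strict $\gamma$-cone via properness, apply Theorem~\ref{th:cutoff}, and then eliminate the residual zero-section contribution by observing that over an open set $U$ disjoint from $\pi_M(A)$ the restriction $F|_{U\times\R}$ lies both in $\Derb_{Z_\gamma}$ and in its left orthogonal, hence vanishes. The only cosmetic difference is the order of operations — you establish the support estimate first and then feed it back into the microsupport bound, whereas the paper phrases the same vanishing argument pointwise to exclude zero-section points directly.
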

\begin{proof}
The properness hypothesis gives
$\ol{A'} = A' \cup (\pi_M(A) \times \{\tau=0\})$ and this implies the
first assertion.
Then Theorem~\ref{th:cutoff} gives
$\SSi(F)\subset \opb{\piw}(\ol{A'}) \cup T^*_{M\times\R}(M\times\R)$. 
Hence, if $(x,t;0,0)\not\in \opb{\piw}(\ol{A'})$, we have
$\SSi(F|_{U\times \R}) \subset T^*_{U\times \R}(U\times\R)$ for
some neighborhood $U$ of $x$. But $\LG F \simeq F$
and we deduce $F|_{U\times \R} =0$, which proves
$(x,t;0,0)\not\in \SSi(F)$. So we get $\SSi(F)\subset \opb{\piw}(\ol{A'})$. 
\end{proof}

\begin{example}\label{exa:shtA}
(i) Let $M=\R$ endowed with the coordinate $x$ and consider the set
\eqn
&&Z=\{(x,t)\in M\times\R;-1\leq x\leq 1, 0\leq 2t<-x^2+1\}.
\eneqn
Consider the sheaf $\cor_Z$ and denote by $(x,t;\xi,\tau)$
the coordinates on  $T^*(M\times\R)$.
The set $\SSi(\cor_Z)$ is given by 
\eqn
&
\{t=0,-1\leq x\leq 1, \tau>0,\xi=0\}\cup\{2t=-x^2+1,\xi=x\tau,\tau>0\}\\
&\cup\{x= -1,t=0,0\leq-\xi\leq\tau,\tau>0\}\cup \{x= 1,t=0,0\leq\xi\leq\tau,\tau>0\}\\
&\cup \overline{Z} \times\{\xi=\tau=0\}.
\eneqn
It follows that, denoting by $(x;u=\xi/\tau)$ the coordinates in $T^*M$,
 $\rho(\SSi(\cor_Z) \cap(T^*M\times \dT^*\R))$ is the set 
\eqn
&&\{u=0,-1\leq x\leq 1\}\cup\{u=x,-1\leq x\leq 1\}\\
&&\cup \{x=-1, -1\leq u\leq 0\}\cup \{x=1, 0\leq u\leq 1 \}.
\eneqn

\noindent
(ii) Let $a\in\R$ and consider the set $Z=\{(x,t)\in M\times\R;t\geq ax\}$. 
Then $\rho(\SSi(\cor_Z))$ in $T^*M$ is the set $\{(x;u);u=a\}$.

\noindent
(iii) If $G$ is a sheaf on $M$ and $F=G\etens\cor_{s\geq0}$, then 
$\rho(\SSi(F))=\SSi(G)$. 
\end{example}

\subsubsection*{The separation theorem}
Using Lemma~\ref{lem:Acompact} we get the following particular case of
Theorem~\ref{th:separation}:

\begin{theorem}\label{th:Tamvan}{\rm (see \cite[Th.~3.2]{Ta}.)}
Let $A$ and $B$ be two compact subsets of $T^*M$ and assume that
$A\cap B=\emptyset$. 
Then, for any $F\in\Derb_A(\cort_M)$ and $G\in\Derb_B(\cort_M)$, we have
$\Hom[\Derb(\cort_M)](F,G)\simeq 0$.
\end{theorem}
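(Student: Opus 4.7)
The plan is to derive this as a direct consequence of the general separation theorem (Theorem~\ref{th:separation}), using the structural Lemma~\ref{lem:Acompact} to translate the compactness hypothesis on $A,B\subset T^*M$ into the language of strict $\gamma$-cones in $T^*M\times\R^*$.

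First, I identify $F$ and $G$ with their canonical representatives $\lG F$ and $\lG G$ in $\Derb(\corgl_M)$ via the equivalence of Proposition~\ref{pro:Piota}. Set $\mathcal{A}'=\{(x;\xi,\tau)\in T^*M\times\R^*\,;\,\tau>0,\,(x;\xi/\tau)\in A\}$ and define $\mathcal{B}'$ analogously. Because $A$ is compact, $\pi_M$ is proper on $A$, so Lemma~\ref{lem:Acompact} applies and yields: $\ol{\mathcal{A}'}$ and $\ol{\mathcal{B}'}$ are closed strict $\gamma$-cones, $\SSi(F)\subset\opb{\piw_E}(\ol{\mathcal{A}'})$, $\SSi(G)\subset\opb{\piw_E}(\ol{\mathcal{B}'})$, and $\supp F\subset\pi_M(A)\times\R$, $\supp G\subset\pi_M(B)\times\R$.

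Next, I verify $\ol{\mathcal{A}'}\cap\ol{\mathcal{B}'}\subset T^*_MM\times\{0\}$. On $\{\tau>0\}$ any intersection point would map under $\rho$ to $A\cap B=\emptyset$, so the intersection lies in $\{\tau=0\}$. A boundary point at $\tau=0$ is a limit of $(x_n;\xi_n,\tau_n)$ with $\tau_n\to 0^+$ and $(x_n;\xi_n/\tau_n)\in A$; since $A$ is compact the quotient $\xi_n/\tau_n$ stays bounded, forcing $\xi_n\to 0$, and similarly on $\ol{\mathcal{B}'}$. Hence the intersection lies in the zero section of $T^*M$ times $\{0\}\subset T^*\R$, which is exactly $T^*_MM\times\{0\}$.

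The main point to check is the properness hypothesis of Theorem~\ref{th:separation}. The isomorphism $F\simeq\LG F=\cor_\gamma\star F$ (and similarly for $G$) forces $\supp F$ and $\supp G$ to be invariant under the translations $(x,t)\mapsto (x,t+s)$ with $s\geq 0$. Combined with $\supp F\subset\pi_M(A)\times\R$ and $\supp G\subset\pi_M(B)\times\R$, the set
\[S=\{(x,t_1-t_2)\in M\times\R\,;\,(x,t_1)\in\supp G,\,(x,t_2)\in\supp F\}\]
has its $M$-projection contained in the compact $\pi_M(A)\cap\pi_M(B)$. For any compact $K\subset\R$, the intersection of $S$ with $q_2^{-1}(K)$ is thus contained in the compact set $(\pi_M(A)\cap\pi_M(B))\times K$; applying this to the closed subset $\supp L\subset S$, where $L=\rhomc(\lG F,\lG G)$, gives $\supp L\cap q_2^{-1}(K)$ closed inside a compact, hence compact. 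This delivers the required properness of $q_2$.

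With all hypotheses verified, Theorem~\ref{th:separation} applies and gives $\Hom[\Derb(\cor_{M\times\R};U_\gamma)](F,G)\simeq 0$, which by definition is $\Hom[\Derb(\cort_M)](F,G)\simeq 0$. The expected main obstacle is the properness verification: although neither $\supp F$ nor $\supp G$ is bounded along the $t$-direction, the upward-invariance imposed by the projector $\LG$ is precisely what forces the $M$-projection of the difference set $S$ to sit inside the compact $\pi_M(A)\cap\pi_M(B)$; everything else is a mechanical application of the general machinery set up earlier in the paper.
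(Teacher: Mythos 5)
Your proof is correct and is exactly the paper's argument: the paper's entire proof of this theorem reads ``Using Lemma~\ref{lem:Acompact} we get the following particular case of Theorem~\ref{th:separation}'', and you have simply spelled out the verifications (strict $\gamma$-cones via Lemma~\ref{lem:Acompact}, disjointness of the closures away from $T^*_MM\times\{0\}$, properness of $q_2$) that this one-liner leaves implicit. One small caveat: your assertion that $\LG F\simeq F$ forces $\supp F$ to be invariant under upward translation is false (e.g.\ $F=\cor_{M\times[0,1)}$ satisfies $\LG F\simeq F$ but has support $M\times[0,1]$), yet this claim is never actually used---the containment of the $M$-projection of your set $S$ in the compact $\pi_M(A)\cap\pi_M(B)$ follows directly from the support bounds $\supp F\subset\pi_M(A)\times\R$ and $\supp G\subset\pi_M(B)\times\R$, which is all the properness argument needs.
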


\section{Localization by torsion objects}\label{section:tam2}
In~\cite{Ta}, Tamarkin introduces the notion of torsion objects, but
does not study the category of such objects systematically. Hence,
most of the results of this section are new.

In this section we set for short $Z = (T^*M)\times\R \times \{\tau\geq 0 \}$,
a closed subset of $T^*(M\times\R)$.
Recall that $\Derb_Z(\cor_{M\times \R})$ is the subcategory of
$F\in \Derb(\cor_{M\times \R})$ such that $\SSi(F) \subset Z$.  By
Proposition~\ref{prop:cut-offlemma} we have $F\in \Derb_Z(\cor_{M\times \R})$
if and only if the morphism~\eqref{eq:convgamma-id} is an isomorphism, which
reads
\eq\label{eq:iso-convgamma-id}
F\npstar \cor_{M\times [0,+\infty[} \isoto F .
\eneq
Define the map 
\eqn
&&T_c\cl M\times\R\to M\times\R, \quad (x,t)\mapsto (x,t+c).  
\eneqn
For $F\in\Derb_Z(\cor_{M\times\R})$ we deduce easily
from~\eqref{eq:iso-convgamma-id}
\eq\label{eq:iso-convgamma-trans}
F\npstar \cor_{M\times [c,+\infty[} \isoto \oim{T_c} F .
\eneq
The inclusions $[d,+\infty[ \subset [c,+\infty[$, for $c \leq d$,
induce natural morphisms  of functors from $\Derb_Z(\cor_{M\times\R})$
to itself
\eqn
&&\tau_{c,d} \cl \oim{T_c} \to \oim{T_d},\quad c \leq d.
\eneqn
We have the identities:
\eq\label{eq:isom_T_tau}
&&\oim{T_c} \circ\oim{T_d} \simeq \oim{T_{(c+d)}},\quad c,d\in\R,\\
&&\oim{T_e}(\tau_{c,d}(\scbul))=\tau_{e+c,e+d}(\scbul)=\tau_{c,d}(\oim{T_e}(\scbul)),
\quad c \leq d, e\in\R,\\
&&\tau_{c,d} \circ\tau_{d,e} = \tau_{c,e}, \quad c \leq d\leq e.
\eneq
\begin{definition}\label{def:torobjb}{\rm (Tamarkin.)}
An object $F\in \Derb_Z(\cor_{M\times\R})$ is called a torsion object
if $\tau_{0,c}(F) =0$ for some $c\geq 0$ (and hence all $c'\geq c$).
\end{definition}
Let $F\in \Derb_Z(\cor_{M\times\R})$ and assume that $F$ is
supported by $M\times [a,b]$ for some compact interval $[a,b]$ of $\R$.
Then $F$ is a torsion object. 
\begin{remark}
One can give an alternative definition of the torsion objects
by using the classical notion of ind-objects (see~\cite{KS05} for an exposition).
An object $F\in\Derb_Z(\cor_{M\times\R})$ is torsion if and only if the natural
morphism $F\to\sinddlim[c] \oim{T_c}F$ is the zero morphism.
\end{remark}
We let $\shnt$ be the full subcategory of $\Derb_Z(\cor_{M\times\R})$ 
consisting of torsion objects.

\begin{lemma}\label{lem:critere-torsion}
Let $F \to[u] G \to[v] H \to[w] F[1]$ be a distinguished triangle in
$\Derb_Z(\cor_{M\times\R})$.
\bnum
\item
If $H$ belongs to $\shnt$, then there exist $c\geq 0$ and 
$\alpha\cl G\to\oim{T_c} F$ 
such that $\tau_{0,c}(F)=\alpha\circ u$.
\item
If there exist $c\geq 0$ and $\alpha\cl G \to\oim{T_c} F$
making the diagram
\eqn
&&\xymatrix@C=2cm{
F \ar[r]^u \ar[d]_{\tau_{0,c}(F)}& G \ar[dl]_\alpha \ar[d]^{\tau_{0,c}(G)}\\
\oim{T_c} F \ar[r]^{\oim{T_c}u} & \oim{T_c} G 
}
\eneqn
commutative, then $H\in\shnt$.
\enum
\end{lemma}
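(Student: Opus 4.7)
For part (i), the plan is to choose $c\ge 0$ witnessing that $H\in\shnt$, so that $\tau_{0,c}(H)=0$. Applying the contravariant functor $\Hom[\Derb(\cor_{M\times\R})](\scbul,\oim{T_c}F)$ to the rotated distinguished triangle $H[-1]\to F\to[u]G\to H$ yields an exact sequence whose connecting arrow $\Hom(F,\oim{T_c}F)\to\Hom(H[-1],\oim{T_c}F)$ is (up to sign) composition with $w[-1]$. So the lift $\alpha\cl G\to\oim{T_c}F$ with $\alpha\circ u=\tau_{0,c}(F)$ exists precisely when $\tau_{0,c}(F)\circ w[-1]=0$. By naturality of the morphism of functors $\tau_{0,c}\cl\id\to\oim{T_c}$ applied to $w[-1]\cl H[-1]\to F$, this composition equals $\oim{T_c}(w[-1])\circ\tau_{0,c}(H)[-1]=0$, so the desired $\alpha$ exists.

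For part (ii), my first step is to show that $\tau_{0,c}(H)\circ v=0$. Indeed, the commutativity of the lower triangle $\tau_{0,c}(G)=\oim{T_c}u\circ\alpha$ combined with naturality of $\tau_{0,c}$ on $v$ gives $\tau_{0,c}(H)\circ v=\oim{T_c}v\circ\tau_{0,c}(G)=\oim{T_c}(v\circ u)\circ\alpha$, which vanishes since $v\circ u=0$. Applying the long exact sequence of $\Hom(\scbul,\oim{T_c}H)$ to the rotated triangle $G\to H\to[w]F[1]\to G[1]$, I obtain $\beta\cl F[1]\to\oim{T_c}H$ with $\tau_{0,c}(H)=\beta\circ w$.

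Then I claim the stronger relation $\tau_{0,2c}(H)=0$, which shows $H\in\shnt$. Using the identities recorded in~\eqref{eq:isom_T_tau},
\[
\tau_{0,2c}(H)=\oim{T_c}(\tau_{0,c}(H))\circ\tau_{0,c}(H)=\oim{T_c}\beta\circ\oim{T_c}w\circ\beta\circ w.
\]
Naturality of $\tau_{0,c}$ on the morphism $w\cl H\to F[1]$ gives $\oim{T_c}w\circ\tau_{0,c}(H)=\tau_{0,c}(F[1])\circ w$; substituting $\tau_{0,c}(H)=\beta\circ w$ and then using the commutativity of the upper triangle $\tau_{0,c}(F)=\alpha\circ u$ yields
\[
\oim{T_c}w\circ\beta\circ w=\tau_{0,c}(F[1])\circ w=(\alpha\circ u)[1]\circ w=\alpha[1]\circ(u[1]\circ w).
\]
Since $u[1]\circ w$ is a composition of two consecutive maps in the distinguished triangle $G\to H\to F[1]\to G[1]$, it vanishes, and hence $\tau_{0,2c}(H)=0$.

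I do not expect any serious obstacle beyond careful bookkeeping: the sign attached to $w[-1]$ in the rotation does not affect the vanishing argument in (i), and the appearance of $2c$ rather than $c$ in (ii) is forced by the fact that $\tau_{0,c}(H)$ factors through $w$. The whole argument rests on just two ingredients already in the paper: naturality of $\tau_{0,c}\cl\id\to\oim{T_c}$, and the triangulated axiom that consecutive maps in a distinguished triangle compose to zero.
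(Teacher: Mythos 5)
Your proof is correct and follows essentially the same route as the paper's: part (i) via the long exact sequence of $\Hom(\scbul,\oim{T_c}F)$ on the rotated triangle together with naturality of $\tau_{0,c}$, and part (ii) by first producing $\beta$ with $\tau_{0,c}(H)=\beta\circ w$ and then showing $\tau_{0,2c}(H)=0$ using naturality and the vanishing of consecutive compositions. The only cosmetic difference is that you apply the naturality square to $w$ where the paper applies it to $\beta$; the computation is otherwise identical.
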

\begin{proof}
(i) Choose $c\geq 0$ such that $\tau_{0,c}(H)\simeq 0$ and consider
the diagram with solid arrows
\eqn
\vcenter{
\xymatrix@C=1.8cm{
H[-1] \ar[r]^{w[-1]} \ar[d]_{\tau_{0,c}(H[-1])} 
             &F\ar[r]^u \ar[d]_{\tau_{0,c}(F)} 
                  &G \ar[d]_{\tau_{0,c}(G)}  \ar@{-->}[dl]_\alpha \\
\oim{T_c} H[-1]\ar[r]^{\oim{T_c}w}  
     &\oim{T_c}F\ar[r]^{\oim{T_c}u}
            &\oim{T_c} G .
}}
\eneqn
Since $\tau_{0,c}(H[-1])\simeq0$,
we have $\tau_{0,c}(F)\circ w[-1] =0$.  Since $\Hom(\scbul,\oim{T_c}F)$ 
is a cohomological functor we deduce the existence of $\alpha$.

\noindent
(ii) We apply $\oim{T_c}$ twice and obtain morphisms of distinguished
triangles:
\eqn
\vcenter{
\xymatrix@C=1.8cm{
F \ar[r]^u \ar[d]_{\tau_{0,c}(F)} 
   & G \ar[r]^v \ar[d]_{\tau_{0,c}(G)}  \ar[dl]_\alpha
        & H \ar[r]^w \ar[d]_{\tau_{0,c}(H)} 
            & F[1] \ar[d]^{\tau_{0,c}(F[1])} \ar@{-->}[dl]_\beta  \\
\oim{T_c}F\ar[r]^{\oim{T_c}u} \ar[d]
   &\oim{T_c} G \ar[r]^{\oim{T_c}v} \ar[d] 
       &\oim{T_c} H \ar[r]^{\oim{T_c}w}  \ar[d]_{\tau_{0,c} (\oim{T_c}H)}
            &\oim{T_c} F[1] \ar[d] \ar@{-->}[dl]_{\oim{T_c}\beta}  \\
\oim{T_{2c}}F \ar[r]  
  & \oim{T_{2c}} G \ar[r]
       & \oim{T_{2c}} H \ar[r]  
            &\oim{T_{2c}} F[1] .
}}
\eneqn
By hypothesis $\tau_{0,c}(H)\circ v=\oim{T_c}v\circ\oim{T_c}u\circ\alpha =0$. 
As above, we deduce the existence of $\beta$ such that
$\tau_{0,c}(H)=\beta\circ w$.  Applying the morphism of functors
$\tau_{0,c}\cl \id\to T_c$ to $\beta$ we find 
\eqn
&&\tau_{0,c}(\oim{T_c}H)\circ\beta=\oim{T_c}\beta\circ\tau_{0,c}(F[1]).  
\eneqn
We deduce:
\begin{multline*}
\tau_{0,c}(\oim{T_c}H) \circ \tau_{0,c}(H) 
= \tau_{0,c}(\oim{T_c}H) \circ \beta \circ w  
= \oim{T_c}\beta \circ \tau_{0,c}(F[1])\circ w \\
= \oim{T_c}\beta \circ \alpha[1] \circ u[1]  \circ w   =0.
\end{multline*}
Using~\eqref{eq:isom_T_tau} we obtain $\tau_{0,2c}(H)\simeq0$ so that $H\in\shnt$.
\end{proof}

\begin{theorem}\label{th:torsion}
The subcategory $\shnt$ is a null system in $\Derb_Z(\cor_{M\times\R})$.
\end{theorem}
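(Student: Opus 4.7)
The plan is to verify the three axioms of a null system inside $\Derb_Z(\cor_{M\times\R})$. Stability under isomorphism is immediate from the definition, since $\tau_{0,c}(F) = 0$ depends only on $F$ up to isomorphism. The zero object lies in $\shnt$ trivially. Stability under shift follows from the identification $\tau_{0,c}(F[1]) \simeq \tau_{0,c}(F)[1]$, so $F\in \shnt$ if and only if $F[1]\in\shnt$.

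The substantive point is the triangle property: if $F\to[u] G\to[v] H\to[w] F[1]$ is distinguished in $\Derb_Z(\cor_{M\times\R})$ and two of $F, G, H$ belong to $\shnt$, then so does the third. My strategy is to reduce each of the three cases to Lemma~\ref{lem:critere-torsion}(ii) with the choice $\alpha = 0$, after suitably rotating the triangle so that the missing object sits in the third position. In each case one chooses $c\geq 0$ large enough to simultaneously kill the two relevant translation morphisms; then the two commutativity conditions in Lemma~\ref{lem:critere-torsion}(ii) collapse to $0=0$ and the lemma outputs torsion of the third object.

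Concretely: if $F,G\in\shnt$, pick $c$ with $\tau_{0,c}(F)=0$ and $\tau_{0,c}(G)=0$; apply Lemma~\ref{lem:critere-torsion}(ii) directly to $F\to G\to H$ with $\alpha = 0$ to deduce $H\in\shnt$. If $F,H\in\shnt$, rotate to the triangle $H[-1]\to F\to G$, pick $c$ killing $\tau_{0,c}(H[-1])$ and $\tau_{0,c}(F)$, and apply Lemma~\ref{lem:critere-torsion}(ii) with $\alpha=0$ to conclude $G\in\shnt$. If $G,H\in\shnt$, rotate twice to $G[-1]\to H[-1]\to F$, pick $c$ killing $\tau_{0,c}(G[-1])$ and $\tau_{0,c}(H[-1])$, and again apply Lemma~\ref{lem:critere-torsion}(ii) with $\alpha=0$ to get $F\in\shnt$.

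There is no serious obstacle: all the real work has been done in Lemma~\ref{lem:critere-torsion}, whose existence of the factorizing arrow $\alpha$ comes from the cohomological nature of $\Hom(\scbul,\oim{T_c}F)$ applied to a morphism of distinguished triangles induced by $\tau_{0,c}$. The only mild subtlety is bookkeeping with the rotations and checking that one may take a common $c$ large enough, which is immediate since $\tau_{0,c}(F)=0$ implies $\tau_{0,c'}(F)=0$ for all $c'\geq c$ by the factorization $\tau_{0,c'} = \tau_{c,c'}\circ\tau_{0,c}$ from~\eqref{eq:isom_T_tau}.
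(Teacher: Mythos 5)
Your proposal is correct and follows essentially the same route as the paper: choose a common $c$ and apply Lemma~\ref{lem:critere-torsion}~(ii) with $\alpha=0$. The only cosmetic difference is that you spell out all three rotated cases of the two-out-of-three property, whereas the paper verifies only the case $F,G\in\shnt\Rightarrow H\in\shnt$ and lets stability under shift and rotation of triangles take care of the rest.
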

\begin{proof}
It is clear that an object isomorphic to a torsion object is itself
a torsion object and that $\shnt$ is stable by the shift
functor. Hence it remains 
to check that if $F\to G\to H \to[+1]$ is a
distinguished triangle with $F,G\in\shnt$ then $H\in\shnt$.
We choose $c\geq 0$ such that $\tau_{0,c}(F) =0$ and $\tau_{0,c}(G)=0$ 
and we apply Lemma~\ref{lem:critere-torsion}~(ii) to the diagram
\eqn
&&\xymatrix@C=2cm{
F \ar[r]^u \ar[d]_{0}  & G \ar[dl]_0 \ar[d]^{0}  \\
\oim{T_c} F \ar[r]^{\oim{T_c}u}  & \oim{T_c} G .
}
\eneqn
\end{proof}
\begin{corollary}\label{cor:cone-tau-torsion}
For any $F\in \Derb_Z(\cor_{M\times\R})$ and any $c\geq 0$, 
the cone of $\tau_{0,c}(F)$ is a torsion object.
\end{corollary}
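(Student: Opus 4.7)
The corollary is an immediate application of Lemma~\ref{lem:critere-torsion}~(ii), with essentially no additional work beyond unpacking the identities in~\eqref{eq:isom_T_tau}. The plan is as follows.

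Embed the morphism $\tau_{0,c}(F)$ in a distinguished triangle
\[
F \to[u] \oim{T_c}F \to[v] H \to[+1],
\]
where by definition $u = \tau_{0,c}(F)$. We aim to apply Lemma~\ref{lem:critere-torsion}~(ii) with this triangle, with the constant ``$c$'' of that lemma chosen to be the same $c$ that appears in our corollary, and with the diagonal morphism
\[
\alpha = \id_{\oim{T_c}F}\cl \oim{T_c}F \longrightarrow \oim{T_c}F.
\]

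Two commutativities must be verified in the diagram of the lemma. The upper triangle $\alpha \circ u = \tau_{0,c}(F)$ is trivial, since $\alpha = \id$ and $u = \tau_{0,c}(F)$ by construction. The lower triangle $\oim{T_c}(u) \circ \alpha = \tau_{0,c}(\oim{T_c}F)$ reduces to the identity
\[
\oim{T_c}(\tau_{0,c}(F)) = \tau_{0,c}(\oim{T_c}F),
\]
which is exactly the middle equality of~\eqref{eq:isom_T_tau} (both sides equal $\tau_{c,2c}(F)$). Hence the hypotheses of Lemma~\ref{lem:critere-torsion}~(ii) are met and one concludes $H \in \shnt$.

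There is no real obstacle here: the content of the corollary is essentially that the cone of $\tau_{0,c}(F)$ is annihilated by $\tau_{0,c}$ up to a shift, which is built into Lemma~\ref{lem:critere-torsion}~(ii). The only verification required is the compatibility of the translation natural transformations, and this is immediate from~\eqref{eq:isom_T_tau}.
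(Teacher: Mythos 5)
Your proof is correct and is essentially identical to the paper's: both apply Lemma~\ref{lem:critere-torsion}~(ii) to the triangle on $\tau_{0,c}(F)$ with $\alpha=\id_{\oim{T_c}F}$, verifying the lower triangle via the identity $\oim{T_c}(\tau_{0,c}(F))=\tau_{c,2c}(F)=\tau_{0,c}(\oim{T_c}F)$ from~\eqref{eq:isom_T_tau}.
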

\begin{proof}
We apply Lemma~\ref{lem:critere-torsion}~(ii) to the commutative diagram
\eqn
&&\xymatrix@C=3cm{
F \ar[r]^{\tau_{0c}(F)}\ar[d]_{\tau_{0c}(F)}  
&\oim{T_c} F \ar[dl]_\id \ar[d]^{\tau_{0c}(\oim{T_c} F )}  \\
\oim{T_c} F\ar[r]^{\oim{T_c}{\tau_{0c}(F)}}  & \oim{T_{2c}} F .
}
\eneqn
\end{proof}

The subcategory $\Derb(\corgl_M)$ of $\Derb(\cor_{M\times\R})$ is
contained in $\Derb_Z(\cor_{M\times\R})$. So we can define torsion
objects in $\Derb(\corgl_M)$ or in the equivalent category $\Derb(\cort_M)$.
We let $\shnt^\gamma$ be the subcategory of torsion objects in
$\Derb(\cort_M)$. Then Theorem~\ref{th:torsion} implies that $\shnt^\gamma$
is a null system.
\begin{definition}\label{def:catTau}
The triangulated category $\sht(\cor_M)$ is the localization of
$\Derb(\cort_M)$ by the null system $\shnt^\gamma$. In other words,
$\sht(\cor_M)=\Derb(\cort_M)/\shnt^\gamma$.
\end{definition}
By Corollary~\ref{cor:cone-tau-torsion}, $\tau_{0,c}(G)$ becomes
invertible in $\sht(\cor_M)$ for any $G\in\Derb(\cort_M)$. 
Hence for a morphism $u\cl F \to G$ in $\Derb(\cort_M)$
and for $c\geq 0$ we can define
$\tau_{0,c}(G)^{-1} \circ u \cl F \to G$ in $\sht(\cor_M)$.  The
family of $\tau_{c,c'}(G)$'s defines an inductive system 
$\{\oim{T_c} G\}_c$ and we
have $\tau_{0,c'}(G)^{-1} \circ \tau_{c,c'}(G) \circ u =
\tau_{0,c}(G)^{-1} \circ u$ for $c'\geq c$.  This defines a natural
morphism:
\eq\label{eq:morphism_sht}
\indlim[c\to+\infty]
\Hom[{\Derb(\cort_M)}](F,\oim{T_c} G)\to\Hom[\sht(\cor_M)](F,G).
\eneq
\begin{proposition}\label{pro:locandlim}
For any $F,G\in \Derb(\cort_{M})$ the morphism~\eqref{eq:morphism_sht}
is an isomorphism.
\end{proposition}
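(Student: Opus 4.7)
The strategy is to identify morphisms in the Verdier quotient
$\sht(\cor_M) = \Derb(\cort_M)/\shnt^\gamma$ via the standard calculus of fractions
(see, e.g.,~\cite[\S 10.2]{KS05}), and to prove that the family
$\{\tau_{0,c}(G)\}_{c\geq 0}$ is cofinal among all morphisms out of $G$ whose cone
is torsion. Both surjectivity and injectivity of~\eqref{eq:morphism_sht}
will then reduce to a single application of Lemma~\ref{lem:critere-torsion}(i).
Well-definedness of~\eqref{eq:morphism_sht} is immediate: if $c'\geq c$ and
$f\cl F\to \oim{T_c}G$, then the identity
$\tau_{0,c'}(G)=\tau_{c,c'}(G)\circ\tau_{0,c}(G)$ of~\eqref{eq:isom_T_tau} shows
that $f$ and $\tau_{c,c'}(G)\circ f$ have the same image
$\tau_{0,c}(G)^{-1}\circ f$ in $\Hom[\sht(\cor_M)](F,G)$.

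\emph{Surjectivity.} Any $\phi\in\Hom[\sht(\cor_M)](F,G)$ is represented by a
right fraction $F\xrightarrow{u} G'\xleftarrow{s}G$ with $H\eqdot\text{cone}(s)\in\shnt^\gamma$.
Choose $c\geq 0$ with $\tau_{0,c}(H)=0$ and apply
Lemma~\ref{lem:critere-torsion}(i) to the distinguished triangle
$G\xrightarrow{s}G'\to H\to[+1]$. This produces
$\alpha\cl G'\to\oim{T_c}G$ with $\alpha\circ s=\tau_{0,c}(G)$. The element
$\alpha\circ u\in\Hom[\Derb(\cort_M)](F,\oim{T_c}G)$ is then sent by~\eqref{eq:morphism_sht}
to $\tau_{0,c}(G)^{-1}\circ\alpha\circ u = s^{-1}\circ u=\phi$ in $\sht(\cor_M)$,
as required.

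\emph{Injectivity.} Suppose $f\cl F\to\oim{T_c}G$ has vanishing image in
$\sht(\cor_M)$. By the calculus of fractions, there exists $t\cl\oim{T_c}G\to G''$
with $H'\eqdot\text{cone}(t)\in\shnt^\gamma$ and $t\circ f=0$. Pick $c'\geq 0$
with $\tau_{0,c'}(H')=0$ and apply Lemma~\ref{lem:critere-torsion}(i) to the
triangle $\oim{T_c}G\xrightarrow{t}G''\to H'\to[+1]$. We obtain
$\beta\cl G''\to\oim{T_{c'}}(\oim{T_c}G)\simeq \oim{T_{c+c'}}G$ (using the first
identity of~\eqref{eq:isom_T_tau}) with $\beta\circ t=\tau_{0,c'}(\oim{T_c}G)$.
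Under the identification above, $\tau_{0,c'}(\oim{T_c}G)$ coincides with
$\tau_{c,c+c'}(G)$ by the second identity of~\eqref{eq:isom_T_tau}. Therefore
$$
\tau_{c,c+c'}(G)\circ f = \beta\circ t\circ f = 0,
$$
so $f$ is killed on the nose in $\Hom[\Derb(\cort_M)](F,\oim{T_{c+c'}}G)$ and
hence represents zero in the colimit.

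The main technical point is really the existence of a right fraction representing
an arbitrary morphism in the quotient and of a right annihilator witnessing the
vanishing of a morphism; this is the standard Verdier localization machinery for
null systems. The specific content of the proposition is the cofinality of the
$\tau_{0,c}(G)$'s among all torsion-cone morphisms out of $G$, which is exactly
what Lemma~\ref{lem:critere-torsion}(i) delivers in each direction.
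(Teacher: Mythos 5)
Your proof is correct and follows essentially the same route as the paper: both directions are handled by representing morphisms (resp.\ the vanishing of morphisms) in the Verdier quotient via fractions with torsion cone and then invoking Lemma~\ref{lem:critere-torsion}(i) to factor the localizing morphism through some $\tau_{0,c}$ (resp.\ $\tau_{c,c+c'}$). The only differences are notational, e.g.\ your explicit use of the identification $\tau_{0,c'}(\oim{T_c}G)=\tau_{c,c+c'}(G)$ from~\eqref{eq:isom_T_tau}, which the paper uses implicitly.
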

\begin{proof}
(i) Let us first show that~\eqref{eq:morphism_sht} is surjective.  A
morphism $u\cl F \to G$ in $\sht(\cor_M)$ is given by 
$F\to[v] G'\from[s]G$, where the cone of $s$ is a torsion object.  By
Lemma~\ref{lem:critere-torsion}~(i) there exist $c\geq 0$ and
$\alpha\cl G'\to \oim{T_c}G$ such that $\tau_{0,c}(G)=\alpha\circ s$:

\eqn
&&\xymatrix{
F \ar[r]^v  & G' \ar[rd]_\alpha 
& G \ar[l]_s \ar[d]^{\tau_{0,c}(G)} \\
&& \oim{T_c} G .
}
\eneqn
Hence we obtain $u = \tau_{0,c}(G)^{-1} \circ \alpha \circ v$ in
$\sht(\cor_M)$.  In other words $u$ is the image of $\alpha \circ v$
by~\eqref{eq:morphism_sht}.

\medskip
\noindent
(ii) Now we show that~\eqref{eq:morphism_sht} is injective.  We
consider $u\cl F \to \oim{T_c} G$ in
$\Derb(\cort_M)$ such that $\tau_{0c}(G)^{-1}\circ u =0$ 
in $\sht(\cor_M)$. Then $u=0$ in $\sht(\cor_M)$ and this
means that there exists $s\cl \oim{T_c} G \to G'$ such that the cone of
$s$ is a torsion object and $s \circ u =0$ in
$\Derb(\cort_M)$.  By
Lemma~\ref{lem:critere-torsion}~(i) there exist $d\geq 0$ and
$\alpha\cl G'\to \oim{T_{(c+d)}}G$ such that $\tau_{c,c+d}(G)=\alpha\circ s$:
\eqn
&&\xymatrix@C=2cm{
F \ar[r]^u &\oim{T_c}G\ar[d]_{\tau_{c, c+d}(G)} \ar[r]^s& G' \ar[dl]^\alpha \\
& \oim{T_{(c+d)}} G .
}
\eneqn
We obtain $\tau_{c,c+d}(G) \circ u = \alpha \circ s \circ u =0$
which means that the image of $u$ in the left hand side
of~\eqref{eq:morphism_sht} is zero, as required.
\end{proof}
Recall the functor $\Psi_\gamma$ in~\eqref{eq:dbMtodbEb}.
\begin{corollary}\label{cor:dbMtodbE}
The composition
$\Derb(\cor_M) \to[\Psi_\gamma] \Derb(\cor_{M\times\R};U_\gamma) \to \sht(\cor_M)$
is a fully faithful functor.
\end{corollary}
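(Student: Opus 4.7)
The plan is to reduce the claim to Proposition~\ref{pro:dbMtodbE} via the description of $\sht(\cor_M)$ in Proposition~\ref{pro:locandlim}. Since $\Psi_\gamma$ already takes values in $\Derb(\corgl_M) \subset \Derb(\corg_M)$ by Lemma~\ref{le:dbMtodbE}, what must be shown is that, after composing with the localization by the null system $\shnt^\gamma$, no new morphisms are created and none are killed. Concretely, for $F,G\in\Derb(\cor_M)$, I will compute the right-hand side of~\eqref{eq:morphism_sht} applied to $\Psi_\gamma F$ and $\Psi_\gamma G$ and show it is naturally isomorphic to $\Hom[\Derb(\cor_M)](F,G)$, with the isomorphism being the one induced by $\Psi_\gamma$.

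The first step is to identify $\oim{T_c}\Psi_\gamma(G)\simeq \opb{q}G\otimes \cor_{M\times[c,\infty[}$, and to check that this object still lies in $\Derb(\corgl_M)$; this follows either from $\SSi(\oim{T_c}\Psi_\gamma G)\subset\{\tau\geq 0\}$ via Proposition~\ref{prop:cut-offlemma}, or directly from the fact that translation commutes with $\cor_\gamma\star\scbul$. Consequently, by~\eqref{eq:Piota} together with Lemma~\ref{le:dbMtodbE}, the $\Hom$ in $\Derb(\cort_M)$ equals the $\Hom$ in $\Derb(\cor_{M\times\R})$.

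The second step is a direct adjunction computation: writing $j_c\cl M\times[c,\infty[\hookrightarrow M\times\R$ and $q_c=q\circ j_c$, the projection formula gives $\opb{q}G\otimes \cor_{M\times[c,\infty[}\simeq \oim{j_c}\opb{q_c}G$, and then
\[
\roim{q}\,\rhom(\cor_\gamma,\opb{q}G\otimes\cor_{M\times[c,\infty[})\simeq \roim{q_c}\,\rhom(\opb{j_c}\cor_\gamma,\opb{q_c}G)\simeq \roim{q_c}\opb{q_c}G\simeq G,
\]
the last isomorphism holding because the fibers of $q_c$ are the contractible sets $[c,\infty[$ (apply Corollary~\ref{cor:opbeqv} or \cite[Prop.~2.7.8]{KS90}), exactly as in the proof of Proposition~\ref{pro:dbMtodbE}. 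So for every $c\geq 0$,
\[
\Hom[\Derb(\cort_M)](\Psi_\gamma F,\oim{T_c}\Psi_\gamma G)\simeq \Hom[\Derb(\cor_M)](F,G),
\]
and this isomorphism coincides for $c=0$ with that of Proposition~\ref{pro:dbMtodbE}.

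The third step, which is the only point deserving care, is to verify that the transition maps $\tau_{c,d}(\Psi_\gamma G)$ of the inductive system become the identity under the identifications above. They are induced by the restriction morphism $\cor_{M\times[c,\infty[}\to\cor_{M\times[d,\infty[}$ for $c\leq d$, which under the adjunction translates into the unit $\opb{q_c}G\to \roim{j_{cd}}\opb{j_{cd}}\opb{q_c}G$ for the closed inclusion $j_{cd}\cl M\times[d,\infty[\hookrightarrow M\times[c,\infty[$; by naturality of the adjunction units with respect to $q_d=q_c\circ j_{cd}$, the induced map $\roim{q_c}\opb{q_c}G\to \roim{q_d}\opb{q_d}G$ is compatible with the canonical isomorphisms of both sides with $G$. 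The inductive limit in Proposition~\ref{pro:locandlim} is therefore the constant system at $\Hom[\Derb(\cor_M)](F,G)$, giving the desired full faithfulness. The only mildly delicate point is this naturality check in step three; everything else is a formal consequence of the results already recorded.
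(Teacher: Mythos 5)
Your proof is correct and follows essentially the same route as the paper: the paper's proof simply observes that the argument of Proposition~\ref{pro:dbMtodbE} yields $\Hom[\Derb(\cor_M)](G,F)\isoto\Hom[\Derb(\cor_{M\times\R})](G\etens\cor_{[0,+\infty[},F\etens\cor_{[c,+\infty[})$ for every $c\geq 0$ and then invokes Proposition~\ref{pro:locandlim}. Your third step, checking that the transition maps $\tau_{c,d}$ are compatible with these identifications so that the inductive system is constant, is a worthwhile detail that the paper leaves implicit but is indeed needed to conclude.
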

\begin{proof}
For $F,G\in\Derb(\cor_M)$, the proof of Proposition~\ref{pro:dbMtodbE}
gives as well
$$
\Hom[\Derb(\cor_M)](G,F) \isoto
\Hom[\Derb(\cor_{M\times \R})](G\etens \cor_{[0,+\infty[},
F\etens \cor_{[c,+\infty[})
$$
for any $c\geq 0$. Then the result follows from
Proposition~\ref{pro:locandlim}.
\end{proof}

\subsubsection*{Strict cones and torsion}
For a connected manifold $M$ and $F\in \Derb_Z(\cor_{M\times\R})$ we give a
condition on $\SSi(F)$ which implies that $F$ is torsion over any compact
subset as soon as it is torsion at one point.

We first give a preliminary result on $M\times I\times \R$.
We set $E=\R^2$ and we take coordinates
$(s,t;\sigma,\tau)$ on $T^*E$. We fix $\alpha>0$ and define
the cone $\gamma_\alpha = \{ (s,t); t\geq \alpha |s| \}$ in $E$.
We set $U_\alpha = E \times \Int\gamma_\alpha^\circ$.
We recall Proposition~\ref{prop:cut-offlemma}, reformulated
using~\eqref{eq:convgammabis}:
for $F\in \Derb(\cor_{M\times E})$, we have
$\SSi(F)\subset T^*M \times \ol U_\alpha$ if and only if
\eq\label{eq:convgamma-id-ter}
F\npstar \cor_{M\times \gamma_\alpha} 
\simeq \roim{s_E} \rsect_{M\times E \times \Int \gamma_\alpha} (\opb{q_1}F)
\isoto F,
\eneq
where $s_E\cl M\times E\times E \to M\times E$ is the sum of $E$.
\begin{proposition}\label{prop:proj-torsion}
Let $I$ be an interval of $\R$,
$M$ a manifold and $q\cl M\times I\times\R \to M\times \R$ the
projection. Set $\gamma = I\times [0,+\infty[$.
Let $F\in \Derb(\cor_{M\times I\times\R})$.
We assume that there exists a closed strict $\gamma$-cone
$A\subset (T^*I)\times \R$ such that
$\SSi(F) \subset T^*M \times \opb{\piw}(A)$.
Then, for any $s_1<s_2 \in I$, $\roim{q}(F\tenso \cor_{M\times [s_1,s_2[\times \R})$
and $\roim{q}(F\tenso \cor_{M\times ]s_1,s_2]\times \R})$ are torsion objects
of $\Derb_Z(\cor_{M\times\R})$.
\end{proposition}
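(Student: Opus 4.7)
The plan is to use the distinguished triangle arising from the short exact sequence
$0 \to \cor_{[s_1,s_2[} \to \cor_{[s_1,\infty[} \to \cor_{[s_2,\infty[} \to 0$
on $I$. Tensoring with $F$ and applying $\roim{q}$ (which equals $\reim{q}$ on
$F \otimes \cor_{M \times [s_1,s_2[\times\R}$ since $q$ is proper on its support), we
obtain the triangle
\[
\roim{q}(F \otimes \cor_{M \times [s_1,s_2[\times\R}) \to H(s_1) \to[u] H(s_2) \to[+1],
\]
where $H(s) \eqdot \roim{q}(F \otimes \cor_{M \times [s,\infty[\times\R})$. It therefore suffices to show that $\operatorname{cone}(u)$ is a torsion object.

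After shrinking $I$ if necessary, we may assume the strict cone condition provides a uniform
$a>0$ with $\tau \geq a|\sigma|$ on $A$, so $\SSi(F)$ is contained in the polar of
$\gamma_a \eqdot \{(s,t) \in V : t \geq |s|/a\} \subset V = \R^2$. The cut-off lemma
(Proposition~\ref{prop:cut-offlemma}) yields $F \simeq F \npstar \cor_{M \times \gamma_a}$,
and the quotient map $\cor_{\gamma_a} \to \cor_{v+\gamma_a}$ convolved with $F$ produces a
natural propagation morphism $\tau_v(F)\cl F \to \oim{T_v}(F)$ for each $v \in \gamma_a$,
where $T_v$ is translation by $v$ in the $(s,t)$-factors. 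Setting $v = (s_2-s_1, c)$ with
$c = (s_2-s_1)/a$ (so $v \in \partial\gamma_a$), tensoring $\tau_v(F)$ with
$\id_{\cor_{M \times [s_2,\infty[\times\R}}$ and using the identification
$\oim{T_v}(F) \otimes \cor_{M \times [s_2,\infty[\times\R} = \oim{T_v}(F \otimes \cor_{M \times [s_1,\infty[\times\R})$,
then pushing by $\roim{q}$ (and using $\roim{q} \circ \oim{T_v} = \oim{T_c} \circ \roim{q}$,
which follows from $q \circ T_v = T_c \circ q$), we construct a morphism $\alpha\cl H(s_2) \to \oim{T_c}H(s_1)$.
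The projection $V \to \R$, $(s,t)\mapsto t$, sends $\cor_{\gamma_a} \to \cor_{v+\gamma_a}$ to
$\cor_{[0,\infty[} \to \cor_{[c,\infty[}$, and tracking this compatibility through the construction
shows $\alpha \circ u = \tau_{0,c}(H(s_1))$.

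Applying the octahedral axiom to $H(s_1) \to[u] H(s_2) \to[\alpha] \oim{T_c}H(s_1)$ yields a
distinguished triangle
\[
\operatorname{cone}(u) \to \operatorname{cone}(\tau_{0,c}(H(s_1))) \to \operatorname{cone}(\alpha) \to[+1].
\]
The middle term is torsion by Corollary~\ref{cor:cone-tau-torsion}, so by Theorem~\ref{th:torsion} it remains to show $\operatorname{cone}(\alpha)$ is torsion. We have
$\operatorname{cone}(\alpha) \simeq \roim{q}\bigl((F \npstar \cor_{\gamma_a \setminus (v+\gamma_a)})[1] \otimes \cor_{M \times [s_2,\infty[\times\R}\bigr)$,
and a direct computation shows $\gamma_a \setminus (v+\gamma_a)$ has $t$-width in each $s$-slice uniformly bounded by some $C \leq c + (s_2-s_1)/a$. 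Hence for $c' > C$ the support of $\cor_{\gamma_a \setminus (v+\gamma_a)}$ is disjoint from its $t$-translate by $c'$, so the natural translation map vanishes on that strip sheaf; this vanishing is preserved by $F \npstar (\cdot)$, tensoring with $\cor_{M \times [s_2,\infty[\times\R}$, and $\roim{q}$, establishing that $\operatorname{cone}(\alpha)$ is torsion.

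The case of $\cor_{M \times ]s_1,s_2]\times\R}$ is handled symmetrically using the analogous short exact sequence $0 \to \cor_{]s_1,s_2]} \to \cor_{]s_1,\infty[} \to \cor_{]s_2,\infty[} \to 0$. The technical heart of the argument, where I expect the main obstacle to lie, is verifying the identity $\alpha \circ u = \tau_{0,c}(H(s_1))$; this demands a careful diagram chase ensuring that the 2-dimensional cut-off structure defining the propagation morphism $\tau_v(F)$ descends correctly, under $\roim{q}$, to the 1-dimensional cut-off structure defining the torsion morphism $\tau_{0,c}$ on $H(s_1)$.
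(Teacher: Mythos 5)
Your reduction to showing that $\operatorname{cone}(u)$ is torsion, the propagation morphism $\tau_v(F)$, and the octahedral triangle are all sound, and the overall strategy of propagating along the boundary ray $v\in\partial\gamma_a$ is in the spirit of the paper's proof. But the last step has a genuine gap. You claim $\operatorname{cone}(\alpha)$ is torsion because ``the natural translation map vanishes on'' $\cor_{\gamma_a\setminus(v+\gamma_a)}$ (it has disjoint support from its $t$-translate). However, the strip $Z\eqdot\gamma_a\setminus(v+\gamma_a)$ is neither closed nor open: its lower boundary is closed (contributing $\tau>0$ conormals to $\SSi(\cor_Z)$) while its upper boundary is open (contributing $\tau<0$ conormals). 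So $\cor_Z$ does \emph{not} lie in $\Derb_{\{\tau\geq 0\}}$, and there is no canonical morphism $\cor_Z\to\oim{T_{(0,c')}}\cor_Z$ to speak of; the same is true of $\tilde F=F\npstar\cor_Z[1]$. Consequently, the intrinsic morphism $\tau_{0,c'}(\operatorname{cone}(\alpha))$ is not visibly ``induced'' from anything on the strip sheaf, and the disjointness-of-supports observation does not identify it. To salvage this step you would need a different mechanism -- for instance, a bound on the support of $\operatorname{cone}(\alpha)$ in the $t$-direction -- and that is not what the computation gives, since $\supp F$ is unconstrained.

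The other delicate point, which you flagged yourself, is the identity $\alpha\circ u=\tau_{0,c}(H(s_1))$. Unwinding, $\alpha\circ u=\roim{q}(\tau_v(F)\tenso\mathrm{restr.})$ whereas $\tau_{0,c}(H(s_1))=\roim{q}(\tau^t_{0,c}(F)\tenso\id)$, where $\tau^t_{0,c}$ is the $1$-dimensional cut-off morphism in the $t$-variable. These come from two genuinely different kernel morphisms ($\cor_{\gamma_a}\to\cor_{v+\gamma_a}$ versus $\cor_{\{t\geq 0\}}\to\cor_{\{t\geq c\}}$), and one has to check that they become equal after tensoring with $\cor_{[s_1,\infty[}$, resp.\ $\cor_{[s_2,\infty[}$, and applying $\roim{q}$; this is not automatic and needs an actual argument. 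The paper sidesteps both difficulties by a more direct computation: it rewrites $G=\roim{q}(F\tenso\cor_{M\times[s_1,s_2[\times\R})$ via the cut-off isomorphism and base change as $\roim{\tilde s}\rhom(\cor_M\etens\reim{\tilde q}\cor_D,\opb{\tilde q_1}F)[-1]$ with $D$ an explicit region, so that $\tau_c(G)$ is literally the image of the map $\cor_{T_c(D)}\to\cor_D$ under $\reim{\tilde q}$; the proof then reduces to the purely set-theoretic fact that $\reim{\tilde q}\cor_D$ and $\reim{\tilde q}\cor_{T_c(D)}$ have disjoint supports for $c$ large. I recommend redoing the argument along those lines rather than trying to repair the octahedral route.
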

\begin{proof}
(i) We only consider $G\eqdot \roim{q}(F\tenso \cor_{M\times [s_1,s_2[\times \R})$,
the other case being similar.
We may restrict ourselves to a relatively compact subinterval of $I$ containing
$s_1$ and $s_2$. Hence we may assume that $\SSi(F)$ is contained in
$T^*M \times \{ \tau\geq a |\sigma| \}$ for some $a>0$.  Then, applying
Lemma~\ref{lem:cone-chgt-coord} and changing $a$ if necessary, we may assume
that $I=\R$.

\medskip\noindent
(ii) We set $\alpha = a^{-1}$ so that
$\gamma_\alpha^\circ=\{ \tau\geq a |\sigma| \}$ and 
$\SSi(F)\subset T^*M \times \ol U_\alpha$.
Since $\SSi(\cor_{M\times [s_1,s_2[\times \R})
\subset T^*_MM\times T^*\R \times T^*_\R\R$, Corollary~\ref{cor:opboim}
gives $F\tenso \cor_{M\times [s_1,s_2[ \times \R}
\simeq \rsect_{M\times ]s_1,s_2] \times \R}(F)$ and
the formula~\eqref{eq:convgamma-id-ter} gives
$$
G \simeq \roim{q}\roim{s_E} \rsect_{M\times D} (\opb{q_1}F),
$$
where $D = (E\times \Int \gamma_\alpha) \cap \{(s,t,s',t'); s_1<s+s'\leq s_2 \}$.
We consider the commutative diagram
$$
\xymatrix@C=1.5cm{
&M\times E \times E \ar[r]^{s_E}\ar[d]^{\id_M\times \tilde q} \ar[dl]_{q_1}
&  M \times E \ar[d]^q  \\
M\times E  & M\times E \times \R \ar[r]^{\tilde s}\ar[l]_{\tilde q_1}
& M\times \R,  }
$$
where $\tilde q(s,t,s',t') = (s,t,t')$, $\tilde q_1(x,s,t,t') = (x,s,t)$
and $\tilde s(x,s,t,t') = (x,t+t')$.
The adjunction between $\reim{(\id_M\times \tilde q)}$ and
$\epb{(\id_M\times \tilde q)}$ gives
\eq
\notag
G & \simeq & \roim{\tilde s} \roim{(\id_M\times \tilde q)}
\rhom(\cor_{M\times D}, \epb{(\id_M\times \tilde q)} \opb{\tilde q_1}F)[-1] \\
\label{eq:proj-torsion}
& \simeq & \roim{\tilde s} 
\rhom(\cor_M\etens\reim{\tilde q} \cor_D, \opb{\tilde q_1}F)[-1] .
\eneq

\medskip\noindent
(iii) Through the isomorphism~\eqref{eq:convgamma-id-ter} the morphism
$\tau_c(F)$ is induced by the morphism
$\cor_{T_c(E\times \Int \gamma_\alpha)} \to \cor_{E\times \Int \gamma_\alpha}$, where
$T_c(s,t,s',t') = (s,t,s',t'+c)$.
Using~\eqref{eq:proj-torsion} it follows that $\tau_c(G)$ is induced 
by the morphism $u_c\cl \cor_{T_c(D)} \to \cor_D$. Hence it is enough to see
that the image of $u_c$ by $\reim{\tilde q}$ is the zero morphism.
In the remainder of the proof we show that $\reim{\tilde q} \cor_D$ and
$\reim{\tilde q} \cor_{T_c(D)}$ have disjoint supports for $c$ big enough.

\medskip\noindent
(iv) For a given point $(s,t,t') \in E\times\R$ we have
$\opb{\tilde q}(s,t,t') \cap D = \emptyset$ if $t'<0$ and otherwise
\eqn
\opb{\tilde q}(s,t,t') \cap D
& =& \{s';\; s_1-s <s'\leq s_2-s, \; t'\geq \alpha |s'|\}  \\
&= & ]s_1-s ,s_2-s] \cap [-\alpha^{-1} t', \alpha^{-1} t']  .
\eneqn
This is $\emptyset$ or a half closed interval 
when $t'$ is not in $I_s \eqdot [-\alpha(s_2-s), -\alpha(s_1-s)[$.
It follows that $\supp(\reim{\tilde q} \cor_D)$ is contained in
$D'\eqdot \{(s,t,t');\; t'\in \ol{I_s} \}$.
The support of $\reim{\tilde q} \cor_{T_c(D)}$ is contained in
$T'_c(D')$, with $T'_c(s,t,t') = (s,t,c+t')$.
Since $I_s$ is of length $\alpha(s_2-s_1)$ (independent of $s$) we obtain
$D'\cap T'_c(D') =\emptyset$ for $c> \alpha(s_2-s_1)$.
\end{proof}

From now on, we consider a connected manifold $M$ and
$F\in \Derb(\cor_{M\times\R})$.  We set $\gamma = M\times [0,+\infty[$ and we
make the hypothesis
\eq\label{eq:hyp-SSF-strict}
&&\mbox{$\SSi(F) \subset \opb{\piw}(A)$ for some closed $\gamma$-strict
cone $A\subset (T^*M)\times\R$.}
\eneq
In particular $F\in \Derb_{\{\tau\geq 0\}}(\cor_{M\times\R})$.
\begin{lemma}\label{lem:propag-of-torsion-local}
Let $F\in \Derb(\cor_{M\times\R})$ satisfying~\eqref{eq:hyp-SSF-strict}.
We assume that there exists $x \in M$ such that $F|_{\{x\}\times \R}$ is a
torsion object in $\Derb_{\{\tau\geq 0\}}(\cor_\R)$.  Then there exists a
neighborhood $U$ of $x$ such that $F|_{U\times \R}$ is a torsion object in
$\Derb_{\{\tau\geq 0\}}(\cor_{U\times\R})$.
\end{lemma}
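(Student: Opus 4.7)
The plan is to exploit the non-characteristic behavior of the slice $\{x\}\times\R$ afforded by the strict $\gamma$-cone hypothesis, and to combine it with the flexibility of increasing the torsion parameter $c$ in order to lift vanishing of $\tau_{0,c}(F)$ at the slice to vanishing on a small neighborhood of $x$.

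First I would work in a local coordinate chart so that $M$ is identified with an open subset of $\R^d$ and $x$ with $0$. The hypothesis~\eqref{eq:hyp-SSF-strict} then reads $\SSi(F)\subset\{\tau\geq a|\xi|\}$ for some $a>0$ on a neighborhood of $\{0\}\times\R$. Since the conormal $T^*_{\{0\}\times\R}(M\times\R)$ consists of covectors of the form $(0,t;\xi,0)$, the inequality $\tau\geq a|\xi|$ forces $\xi=0$ on its intersection with $\SSi(F)$. Hence the closed inclusion $i_0\cl \{0\}\times\R\hookrightarrow M\times\R$ is non-characteristic for $F$; by translation invariance in $t$ the same holds for every $\oim{T_c}F$. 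Theorem~\ref{th:opboim}~(iii) then identifies $\opb{i_0}\rhom(F,\oim{T_c}F)$ with $\rhom(F|_{\{0\}\times\R},\oim{T_c}F|_{\{0\}\times\R})$ up to a trivial dimensional twist.

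Next, I fix $c_0\geq 0$ such that $\tau_{0,c_0}(F|_{\{0\}\times\R})=0$. Since $\tau_{0,c}$ is natural in its argument and restriction commutes with $\oim{T_c}$, the morphism $\phi_c\eqdot\tau_{0,c}(F)\cl F\to\oim{T_c}F$ satisfies $\opb{i_0}\phi_c = \tau_{0,c}(F|_{\{0\}\times\R})$, which vanishes for every $c\geq c_0$.

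The heart of the argument is to upgrade this vanishing on the slice to vanishing on a neighborhood. For this I would view $\phi_c$ as a representative of the class of $\id_{F|_{U\times\R}}$ in $\varinjlim_c \Hom(F|_{U\times\R},\oim{T_c}F|_{U\times\R})\simeq \Hom_{\sht(\cor_U)}(F|_{U\times\R},F|_{U\times\R})$ from Proposition~\ref{pro:locandlim}. The non-characteristic base change established above shows that restriction along $i_0$ is compatible with the inductive limits defining these $\sht$-Hom spaces, so the hypothesis that $F|_{\{0\}\times\R}$ is torsion translates into the vanishing of $\id_{F|_{\{0\}\times\R}}$ in $\Hom_{\sht(\cor_{\{0\}})}(F|_{\{0\}\times\R},F|_{\{0\}\times\R})$. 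Unwinding Proposition~\ref{pro:locandlim} at both the neighborhood $U$ and the point $\{0\}$, and using that the germ at $x$ of the presheaf of $\sht$-Homs is computed by restriction to the slice, I conclude that there exist $c\geq c_0$ and an open neighborhood $U$ of $x$ with $\phi_c|_{U\times\R}=0$; this is exactly the statement that $F|_{U\times\R}$ is a torsion object.

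The main obstacle is justifying the identification of the germ at $x$ of the presheaf $U\mapsto \varinjlim_c\Hom(F|_{U\times\R},\oim{T_c}F|_{U\times\R})$ with the corresponding $\sht$-Hom space at the point $\{x\}$. This requires combining the non-characteristic inverse image theorem for $\rhom$ with the commutation of $\opb{i_0}$ with $\varinjlim_c$, and it is at this step that the freedom to enlarge $c$ beyond $c_0$ (together with Corollary~\ref{cor:cone-tau-torsion}, which ensures that the cones of the transition morphisms $\tau_{c,c'}(F)$ are already torsion) is essential to pass from germ-vanishing to honest vanishing of a derived morphism on a small neighborhood.
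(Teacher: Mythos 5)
Your reduction steps (local coordinates, the estimate $\SSi(F)\subset\{\tau\geq a\|\xi\|\}$, non-characteristicity of the slice $\{0\}\times\R$ for $F$ itself, and the choice of $c_0$ killing $\tau_{0,c_0}(F|_{\{0\}\times\R})$) are fine, but the central step --- identifying the germ at $x$ of the presheaf $U\mapsto\varinjlim_c\Hom(F|_{U\times\R},\oim{T_c}F|_{U\times\R})$ with $\varinjlim_c\Hom(F|_{\{x\}\times\R},\oim{T_c}F|_{\{x\}\times\R})$ --- is a genuine gap, and it is not a technicality: the injectivity of that comparison map, applied to the class of $\id_F$, \emph{is} the lemma. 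It cannot be extracted from the non-characteristic inverse image theorem, for two concrete reasons. First, the object whose sections you need to control is $\rhom(F,\oim{T_c}F)$, whose microsupport is only bounded by $\SSi(\oim{T_c}F)\hatp\SSi(F)^a$ (Theorem~\ref{th:opboim2}); under~\eqref{eq:hyp-SSF-strict} this set still contains covectors $(x,t;\xi_1-\xi_2,0)$ with $\xi_1\neq\xi_2$ (take two points of $\SSi(F)$ over $(x,t)$ with the same $\tau$), i.e.\ precisely the conormals to $\{x\}\times\R$. So the slice is characteristic for the internal hom, there is no propagation in the $x$-direction, and the commutation of $\opb{i_0}$ with $\rhom$ that you invoke is not available (this is exactly why the paper systematically replaces $\rhom$ by $\rhomc$, whose microsupport is controlled by $\hatstar$ via Lemma~\ref{lem:star-gamma-cones}, in such arguments). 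Second, even granting that commutation, $\varinjlim_U\rsect(U\times\R;H)\to\rsect(\{0\}\times\R;\opb{i_0}H)$ is not an isomorphism for a non-compact fiber without a properness or propagation input, which you do not supply. Note also that the strict-cone constant $a$ never enters your argument quantitatively, whereas the conclusion genuinely depends on it: the torsion constant on $U\times\R$ must grow like $a^{-1}$ times the size of $U$.

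For comparison, the paper avoids internal homs entirely. It introduces the contraction $h\cl B\times I\times\R\to B\times\R$, $(x,s,t)\mapsto(sx,t)$, checks that $\opb{h}F$ has microsupport in $\{\tau\geq a|\sigma|\}$ (this is where the strict cone hypothesis is spent), and invokes Proposition~\ref{prop:proj-torsion}: pushing forward over a half-open interval in $s$ produces a torsion object, with torsion constant $\alpha(s_2-s_1)$ coming from an explicit disjointness of supports. Two excision triangles then transfer torsion from the fiber $G_{\{0\}}\simeq\opb{h_0}(F|_{B\times\R})$ (torsion by hypothesis, since $h_0$ contracts to the slice) to $G_{\{1/2\}}\simeq\opb{h_{1/2}}(F|_{B\times\R})$, which is $F$ restricted to a smaller ball. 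If you want to repair your approach you would need to prove a propagation statement of exactly this quantitative kind; the formal limit manipulation cannot substitute for it.
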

\begin{proof}
(i) We may assume that $M$ is an open set in some vector space $V$ and $x=0$.
We take coordinates $(x,t;\xi,\tau)$ on $T^*(M\times \R)$. We may also assume
that $\SSi(F) \subset \{ \tau \geq a ||\xi|| \}$ for some $a>0$
and that $M$ contains the open ball of radius $1$, say $B$.
We set $I = ]-1,1[$ and take coordinates $(s;\sigma)$ on $T^*I$.
We define the  homotopy
$h\cl B\times I\times \R \to B\times \R$, $(x,s,t) \mapsto (sx,t)$.
For $s_0\in I$ we set $h_{s_0} = h(\cdot,s_0,\cdot)$.

\medskip\noindent (ii)
We check that $\opb{h}(F|_{B\times \R})$ satisfies the hypothesis of 
Proposition~\ref{prop:proj-torsion}.
We have $h_\pi(x,s,t;\xi,\tau) = (sx;t\xi,\tau)$ and
$h_d(x,s,t;\xi,\tau) = (x,s,t;s\xi, \langle x,\xi \rangle,\tau)$.
Hence $\ker h_d$ is contained in $\{\tau =0\}$.
Since $\SSi(F) \cap \{\tau =0\}$ is contained in the zero-section, $F$ is
non-characteristic for $h$ and we find
$$
\SSi(\opb{h}(F)) \subset  \{ (x',s',t';\xi',\sigma',\tau');\;
\sigma'= \langle x',\xi' \rangle, \; \tau' \geq a ||\xi'||/|s'| \}.
$$
On $B\times I$ we have $|s'|\leq 1$ and
$|\langle x',\xi' \rangle| \leq ||\xi'||$.  We deduce
$\SSi(\opb{h}(F)) \subset \{ \tau' \geq a |\sigma'| \}$ on
$B\times I\times \R$, as required.

\medskip\noindent (iii)
We apply Proposition~\ref{prop:proj-torsion} to $\opb{h}(F)$ on
$B\times I \times \R$ with $s_1 =0$, $s_2=1/2$.
For $J\subset I$ we set
$G_J=\roim{q}(\opb{h}(F|_{B\times \R})\tenso \cor_{M\times J\times \R})$.
We note that $G_{\{s\}} \simeq \opb{h_s}(F|_{B\times \R})$ for any $s\in I$.
We have the distinguished triangles on $B\times \R$
\eqn
G_{]0,1/2]} \to  G_{[0,1/2]} \to  G_{\{0\}}  \to[+1] , \qquad
 G_{[0,1/2[} \to  G_{[0,1/2]} \to  G_{\{1/2\}}  \to[+1] ,
\eneqn
where $G_{]0,1/2]}$ and $G_{[0,1/2[}$ are torsion by
Proposition~\ref{prop:proj-torsion}.
Since $h_0$ is the contraction $B\times \R \to \{0\}\times \R$ the hypothesis
implies that $G_{\{0\}}$ is torsion. Hence $G_{[0,1/2]}$ is torsion by the
first distinguished triangle and then $G_{\{1/2\}}$ also is torsion by the
second one. Since $h_{1/2}$ is a diffeomorphism from $B\times \R$ to 
$U\times \R$, where $U$ is the ball of radius $1/2$ we deduce that
$F|_{U\times\R}$ is torsion.
\end{proof}

\begin{lemma}\label{lem:propag-of-torsion-punctual}
Let $F\in \Derb(\cor_{M\times\R})$ satisfying~\eqref{eq:hyp-SSF-strict}.
We assume that there exists $x_0\in M$ such that
$F|_{\{x_0\}\times \R}$ is a torsion object in $\Derb_{\{\tau\geq 0\}}(\cor_\R)$.
Then $F|_{\{x\}\times \R}$ also is a torsion object in
$\Derb_{\{\tau\geq 0\}}(\cor_\R)$ for all $x\in M$.
\end{lemma}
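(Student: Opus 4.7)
The plan is to show that $S = \{x\in M\mid F|_{\{x\}\times\R} \text{ is torsion}\}$ is both open and closed in $M$; since $x_0\in S$ and $M$ is connected, this forces $S = M$.

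Openness of $S$ is immediate from Lemma~\ref{lem:propag-of-torsion-local}: if $x\in S$, some open neighborhood $U$ of $x$ satisfies that $F|_{U\times\R}$ is torsion with a uniform constant, so restricting to $\{y\}\times\R$ for $y\in U$ shows $U\subset S$.

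For closedness, I propose to rerun the proof of Lemma~\ref{lem:propag-of-torsion-local} with a translated contracting homotopy. Given $x\in\ol{S}$, choose a chart $\phi\cl \Omega\isoto B_R(0)\subset W$ with $\phi(x) = 0$, and pick $y \in S\cap \Omega$ with $\|\phi(y)\|$ much smaller than $R$. Replace the homotopy $(z,s,t)\mapsto(sz,t)$ used there by
\eqn
&&h(z,s,t) = ((1-s)\phi(y) + sz,\; t),
\eneqn
which collapses $B_R(0)\times\R$ onto $\{\phi(y)\}\times\R$ at $s=0$ and restricts to an affine diffeomorphism of $B_R(0)$ onto $(1-s)\phi(y) + B_{sR}(0)$ for $s\in\,]0,1[$. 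A computation parallel to step~(ii) of the proof of Lemma~\ref{lem:propag-of-torsion-local} shows that $\SSi(\opb{h}F)$ is contained, on $B_R(0)\times I\times\R$, in the strict $\gamma$-cone $\{\tau'\geq (a/(R+\|\phi(y)\|))\,|\sigma'|\}$, where $a>0$ is the cone constant for $\SSi(F)$ on the chart, and the non-characteristicness at $s=0$ is unchanged because $\ker h_d\subset\{\tau'=0\}$. Proposition~\ref{prop:proj-torsion} then applies and yields $G_{]0,s^*]}$ and $G_{[0,s^*[}$ torsion for every $s^*<1$. Since $h_0$ factors through $\{\phi(y)\}\times\R$, $G_{\{0\}}\simeq\opb{h_0}F$ inherits torsion from $F|_{\{y\}\times\R}$. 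The two distinguished triangles of step~(iii) then give $G_{\{s^*\}}$ torsion, which through the diffeomorphism $h_{s^*}$ translates into torsion of $F$ on the ball $(1-s^*)\phi(y) + B_{s^*R}(0)$. For $s^*$ close enough to $1$ this ball contains $0 = \phi(x)$, so restricting to $\{x\}\times\R$ gives $x\in S$.

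The main technical point is the cone estimate for the off-centered homotopy. The raw bound $\tau'/|\sigma'|\geq a/\|z-\phi(y)\|$ becomes trivial at $z = \phi(y)$ (there $\sigma' = 0$ automatically), but on the bounded ball $B_R(0)$ we have $\|z-\phi(y)\|\leq R + \|\phi(y)\|$, yielding the uniform strict $\gamma$-cone bound required by Proposition~\ref{prop:proj-torsion}. Once this estimate is secured, the rest of the argument is essentially a transcription of the proof of Lemma~\ref{lem:propag-of-torsion-local}.
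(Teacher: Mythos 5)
Your argument is correct, but it takes a genuinely different route from the paper's. The paper proves the statement in one step: it chooses an immersion $i\cl\,]-1,1[\,\to M$ with $i(0)=x_0$ and $i(1/2)=x$, pulls $F$ back to $I\times\R$ (this is non-characteristic by Remark~\ref{rem:strictgc}, so the strict-cone hypothesis survives), and applies Proposition~\ref{prop:proj-torsion} together with the two excision triangles for $\{0\}\subset[0,1/2]\supset\{1/2\}$ --- i.e.\ the scheme of part (iii) of the proof of Lemma~\ref{lem:propag-of-torsion-local}, run along a curve joining the two points rather than along a contracting homotopy. You instead make an open--closed argument on the set $S$: openness is indeed immediate from Lemma~\ref{lem:propag-of-torsion-local} (torsion passes to restrictions since $\tau_{0,c}$ commutes with $\opb{j}$ for $j$ the inclusion of a slice), and for closedness you rerun that lemma's proof with the contraction recentred at a nearby point $y\in S$. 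Your key estimate is right: for $h(z,s,t)=((1-s)\phi(y)+sz,t)$ the extra component of $h_d$ is $\sigma'=\langle z-\phi(y),\xi\rangle$, so $|\sigma'|\leq(R+\|\phi(y)\|)\,\|\xi\|\leq a^{-1}(R+\|\phi(y)\|)\,\tau'$ on the ball, giving a uniform strict $\gamma$-cone, and $\ker h_d\subset\{\tau=0\}$ still guarantees non-characteristicity; moreover $0\in(1-s^*)\phi(y)+B_{s^*R}(0)$ already for $s^*\geq 1/2$ since $\|\phi(y)\|<R$. What the paper's route buys is economy --- no new microsupport computation, just a pullback along an immersed path; what yours buys is locality --- you never need to produce a global immersed curve from $x_0$ to $x$, at the cost of redoing the homotopy estimate. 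Both arguments ultimately rest on Proposition~\ref{prop:proj-torsion} and the same pair of distinguished triangles.
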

\begin{proof}
We set $I = ]-1,1[$ and we choose an immersion $i\cl I \to M$ such that
$i(0) = x_0$ and $i(1/2) = x$. Then $\opb{i}F$ satisfies the hypothesis
of Proposition~\ref{prop:proj-torsion} on $I\times \R$.
We let $q\cl I\times \R \to \R$ be the projection.
Then $F|_{\{i(s)\}\times \R} \simeq \roim{q}(\opb{i}F\tenso \cor_{\{s\}\times \R})$
for any $s\in I$.
Now we have the distinguished triangles
\eqn
\roim{q}(\opb{i}F\tenso \cor_{]0,1/2] \times \R})  \to
\roim{q}(\opb{i}F\tenso \cor_{[0,1/2] \times \R})  \to
\opb{i}F|_{\{x_0\}\times \R}   \to[+1] , \\
\roim{q}(\opb{i}F\tenso \cor_{[0,1/2[ \times \R})  \to
\roim{q}(\opb{i}F\tenso \cor_{[0,1/2] \times \R})  \to
\opb{i}F|_{\{x\}\times \R}  \to[+1] 
\eneqn
and we conclude as in part (iii) of the proof of
Lemma~\ref{lem:propag-of-torsion-local}.
\end{proof}

\begin{theorem}\label{th:propag-of-torsion}
Let $M$ be a connected manifold and 
let $F\in \Derb(\cor_{M\times\R})$ satisfying~\eqref{eq:hyp-SSF-strict}.
Then the following assertions are equivalent:
\bnum
\item there exists $x_0\in M$ such that
$F|_{\{x_0\}\times \R}$ is a torsion object in $\Derb_{\{\tau\geq 0\}}(\cor_\R)$,
\item for any relatively compact open subset $U\subset M$ the restriction
$F|_{U\times \R}$ is a torsion object in $\Derb_{\{\tau\geq 0\}}(\cor_{U\times\R})$.
\enum
\end{theorem}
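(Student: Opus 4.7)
The plan is to handle the two implications separately. For (ii)$\Rightarrow$(i), pick any relatively compact open $U\subset M$ containing $x_0$ (a coordinate ball suffices); by (ii) there is $c\geq 0$ with $\tau_{0,c}(F|_{U\times\R})=0$, and since $T_c$ acts only on the $\R$-factor, $\oim{T_c}$ commutes with restriction along $\{x_0\}\times\R\hookrightarrow U\times\R$, forcing $\tau_{0,c}(F|_{\{x_0\}\times\R})=0$. For (i)$\Rightarrow$(ii), Lemma~\ref{lem:propag-of-torsion-punctual} upgrades (i) to the statement that $F|_{\{x\}\times\R}$ is torsion for \emph{every} $x\in M$, and Lemma~\ref{lem:propag-of-torsion-local} then provides for each $x$ an open neighborhood $V_x$ and an integer $c_x\geq 0$ with $\tau_{0,c_x}(F|_{V_x\times\R})=0$. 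Given $U$ relatively compact, $\overline{U}$ is compact and is thus covered by finitely many $V_{x_1},\ldots,V_{x_n}$; setting $c=\max_i c_{x_i}$ we have $\tau_{0,c}(F|_{V_{x_i}\times\R})=0$ for every $i$. The theorem is therefore reduced to the gluing claim: \emph{if $U=V_1\cup\cdots\cup V_n$ and $\tau_{0,c}(F|_{V_i\times\R})=0$ for all $i$, then $F|_{U\times\R}$ is torsion}, with an order bounded in terms of $c$ and $n$.

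I would prove the claim by induction on $n$, reducing to $n=2$. For $U=V\cup W$, apply $\rsect(U\times\R;-)$ to $\rhom(F|_{U\times\R},\oim{T_c}(F|_{U\times\R}))$ and use the Mayer--Vietoris triangle of the cover $U\times\R=(V\times\R)\cup(W\times\R)$: the section $\tau_{0,c}(F|_{U\times\R})$ maps to zero in both $\Hom(F|_{V\times\R},\oim{T_c}(F|_{V\times\R}))$ and its $W$-counterpart, so by exactness it equals $\partial(\alpha)$ for some
\[
\alpha\in\Hom[\Derb(\cor_{(V\cap W)\times\R})]\bl F|_{(V\cap W)\times\R},\,\oim{T_c}(F|_{(V\cap W)\times\R})[-1]\br.
\]
The key observation is that the identity~\eqref{eq:isom_T_tau} gives
$\oim{T_c}(\tau_{0,c}(F|_{(V\cap W)\times\R}))=\tau_{c,2c}(F|_{(V\cap W)\times\R})$,
and this morphism vanishes because $F|_{(V\cap W)\times\R}$ inherits torsion of order $\leq c$ from $F|_{V\times\R}$. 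Naturality of the Mayer--Vietoris sequence under post-composition with $\tau_{c,2c}\cl\oim{T_c}F\to\oim{T_{2c}}F$ then yields
\[
\tau_{0,2c}(F|_{U\times\R})=\partial\bl\tau_{c,2c}(F|_{(V\cap W)\times\R})[-1]\circ\alpha\br=\partial(0)=0,
\]
so $F|_{U\times\R}$ is torsion of order $\leq 2c$; iteration gives order $\leq 2^{n-1}c$ in general, which is finite since $n$ is finite.

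The main obstacle is precisely this $n=2$ Mayer--Vietoris step: vanishing of a morphism in $\Derb$ on each element of an open cover does \emph{not} in general imply global vanishing, because of the degree $-1$ connecting obstruction $\alpha$. The identity $\oim{T_c}\tau_{0,c}=\tau_{c,2c}$ from~\eqref{eq:isom_T_tau} is what unblocks the argument: it converts the torsion hypothesis on the intersection $V\cap W$ into vanishing of the obstruction after post-composition with the translation $\tau_{c,2c}$, at the acceptable cost of doubling the torsion order at each inductive step.
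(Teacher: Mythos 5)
Your proof is correct and follows the same route as the paper: the trivial implication by restriction, reduction of (i)$\Rightarrow$(ii) to a finite cover via Lemmas~\ref{lem:propag-of-torsion-punctual} and~\ref{lem:propag-of-torsion-local}, and a two-set gluing step that doubles the torsion order at each stage. The only difference is cosmetic: the paper performs the gluing by applying Lemma~\ref{lem:critere-torsion}~(ii) (with $\alpha=0$) to the excision triangle $F_{(V\cap W)\times\R}\to F_{V\times\R}\oplus F_{W\times\R}\to F_{(V\cup W)\times\R}\to[+1]$, whereas you run the equivalent Mayer--Vietoris diagram chase directly on $\Hom$ groups, killing the degree $-1$ obstruction by post-composing with $\tau_{c,2c}$.
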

\begin{proof}
We only need to prove that (i) implies (ii).
By Lemmas~\ref{lem:propag-of-torsion-local}
and~\ref{lem:propag-of-torsion-punctual} we can find a finite cover of $\ol U$,
say $\{U_i\}$, $i=1,\ldots,n$, such that $F|_{U_i\times \R}$ is torsion.
We conclude with the remark that, for any two open subsets $V,W \subset M$,
if $F|_{V\times \R}$ and $F|_{W\times \R}$  are torsion,
then so is $F|_{(V\cup W)\times \R}$.
Indeed we apply Lemma~\ref{lem:critere-torsion} to the triangle
$F_{(V\cap W)\times \R} \to
F_{V\times \R} \oplus F_{W\times \R} \to 
F_{(V\cup W)\times \R} \to[+1]$ and the commutative square
$$
\xymatrix@C=2cm{
F_{(V\cap W)\times \R} \ar[r] \ar[d]_{\tau_{0,c} = 0}
& F_{V\times \R} \oplus F_{W\times \R}  \ar[d]^{\tau_{0,c} = 0} \ar[dl]_0  \\
\oim{T_c}(F_{(V\cap W)\times \R}) \ar[r] 
& \oim{T_c}(F_{V\times \R} \oplus F_{W\times \R}) .
}
$$
\end{proof}

\section{Tamarkin's non displaceability theorem}\label{section:tam3}
We will explain here Tamarkin's non displaceability theorem which gives a
criterion in order that two compact subsets of $T^*M$ are non displaceable.

In this section we consider a Hamiltonian isotopy
$\Phi\cl T^*M\times I\to T^*M$ satisfying~\eqref{eq:hyp-support-isot}, that
is, there exists a compact set $C\subset T^*M$ such that
$\phi_s|_{T^*M \setminus C}$ is the identity for all $s\in I$.

Let $\tw \Phi\cl \dT^*(M\times \R)\times I\to \dT^*(M\times\R)$ be the
homogeneous Hamiltonian isotopy given by Proposition~\ref{pro:homnonhomHIso}
and $\tw \Lambda \subset T^*(M\times \R \times M\times \R \times I)$ the conic
Lagrangian submanifold associated to $\tw \Phi$ in~\eqref{eq:def-lambda}.
Let $\tw K \in \Derlb(\cor_{M\times \R \times M\times \R \times I})$
be the quantization of $\tw \Phi$ given in Theorem~\ref{th:3}.

\subsubsection*{Invariance by Hamiltonian isotopy}
For $J\subset I$ a relatively compact subinterval of $I$, 
we introduce the kernel
\eqn
K^J = \reim{q_{1234}}(\tw K\tenso\cor_{M \times \R \times M\times \R \times J})
& \in  & \Derb(\cor_{M \times\R\times M\times \R}),
\eneqn
where $q_{1234}$ is the projection on the first four factors.
We remark that $\tw K$ and $K^J$ satisfy the hypothesis~\eqref{eq:hyp-kernel}.
Hence, by Proposition~\ref{prop:action-kernel},
composition with $K^J$ defines a functor
\eq\label{eq:def-Psi}
\Psi_J\cl \Derb(\cort_M) \to \Derb(\cort_M),
\qquad
F \mapsto K^J \conv F.
\eneq
We note that $K^{\{s\}} \simeq \tw K|_{M \times \R\times M\times \R \times \{s\}}$.
We set for short $\Psi_s = \Psi_{\{s\}}$. We have $\Psi_0 \simeq \id$.

\begin{theorem}\label{thm:invariance}
Let $\Phi\cl T^*M\times I\to T^*M$ be a Hamiltonian isotopy 
satisfying~\eqref{eq:hyp-support-isot}.
For $s\in I$ and $J\subset I$ a relatively compact subinterval
let $\Psi_J,\Psi_s\cl \Derb(\cort_M) \to \Derb(\cort_M)$ be the functors
defined in~\eqref{eq:def-Psi}.
Then for $A$ a closed subset of $T^*M$ and $F\in\Derb_{A}(\cort_M)$ we have
\bnum
\item 
$\Psi_s(F) \in \Derb_{\phi_s(A)}(\cort_M)$ for any $s\in I$,
\item 
$\Psi_{[a,b[}(F)$ and $\Psi_{]a,b]}(F)$ are torsion objects
for any $a<b \in I$,
\item 
for $s\in I$, $s\geq 0$, there exist distinguished triangles
$$
\Psi_{]0,s]}(F) \to  \Psi_{[0,s]}(F) \to F \to[+1],
\quad
\Psi_{[0,s[}(F) \to  \Psi_{[0,s]}(F) \to \Psi_s(F) \to[+1]
$$
and similar ones for $s\leq 0$.
In particular we have a natural isomorphism $F\simeq \Psi_s(F)$ 
in $\sht(\cor_M)$ for any $s\in I$.
\enum
\end{theorem}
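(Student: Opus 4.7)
The plan is to treat all three statements through microsupport estimates on the parametrized convolution $H\eqdot \tw K\conv F\in\Derb(\cor_{M\times\R\times I})$ (composition taken in the first two $M\times\R$ factors, keeping $I$ as a parameter), from which $\Psi_J(F)=K^J\conv F$ is recovered by projecting out $I$ with a cutoff by $J$. For (i), I would combine the bound $\SSi(\tw K)\subset \tw\Lambda\cup T^*_{M\times\R\times M\times\R\times I}(M\times\R\times M\times\R\times I)$ from Theorem~\ref{th:3}~(a) with the microsupport estimate for kernel compositions~\eqref{eq:convolution_of_kern}, applied to $\tw K_s=\tw K|_{\cdot\times\{s\}}$, together with the identity $\rho\circ\tw\phi_s=\phi_s\circ\rho$ from Proposition~\ref{pro:homnonhomHIso}; this gives $\SSi(\Psi_s(F))\cap\{\tau>0\}\subset \opb{\rho}(\phi_s(A))$, which is exactly the statement $\Psi_s(F)\in\Derb_{\phi_s(A)}(\cort_M)$.

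The heart of the argument is (ii), which I would deduce from Proposition~\ref{prop:proj-torsion}. In the coordinates $(x,t,s;\xi,\tau,\sigma)$ on $T^*(M\times\R\times I)$, I claim the strict $\gamma$-cone estimate $|\sigma|\leq M_0\tau$ on $\SSi(H)$ above any compact subinterval $J\subset I$. The Lagrangian $\tw\Lambda$ of~\eqref{eq:def-lambda} satisfies $\sigma=-\tw f$ where $\tw f$ is the generating Hamiltonian of $\tw\Phi$. By Proposition~\ref{pro:homnonhomHIso}, $\tw\phi_s$ preserves $\tau$ and the compact support hypothesis~\eqref{eq:hyp-support-isot} forces $\tw f$ to be of the form $\tau\cdot h(x,\xi/\tau,s)$ with $h$ bounded on compact subintervals of $I$ (it is constant in $(x,\xi/\tau)$ outside the compact set $C$); hence $|\sigma|\leq M_0\tau$ on $\tw\Lambda$ above $J$, and~\eqref{eq:convolution_of_kern} transfers this bound to $\SSi(H)$, with $\tau\geq 0$ coming from $F\in\Derb(\cort_M)$. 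Writing $\Psi_{[a,b[}(F)\simeq \reim{p}(H\tenso \cor_{M\times\R\times[a,b[})$ via base change, where $p\cl M\times\R\times I\to M\times\R$ is the projection, and reshuffling the factors, Proposition~\ref{prop:proj-torsion} applied with $\gamma=I\times[0,+\infty[$ and the strict $\gamma$-cone $\{\tau\geq M_0^{-1}|\sigma|\}$ (see Example~\ref{exa:strictgc}) yields that $\Psi_{[a,b[}(F)$ is a torsion object; the case $\Psi_{]a,b]}(F)$ is analogous. The compactness of the support of $\tw\phi_s-\id$ ensures the properness needed to freely interchange $\reim{p}$ and $\roim{p}$.

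For (iii), I would start from the short exact sequences of sheaves on $I$,
\begin{equation*}
0\to\cor_{]0,s]}\to\cor_{[0,s]}\to\cor_{\{0\}}\to 0,\qquad
0\to\cor_{[0,s[}\to\cor_{[0,s]}\to\cor_{\{s\}}\to 0,
\end{equation*}
pull them back to $M\times\R\times M\times\R\times I$, tensor with $\tw K$, and apply $\reim{q_{1234}}$ to obtain two distinguished triangles of kernels. Convolving with $F$ and invoking $K^{\{0\}}\simeq\cor_\Delta$ (so $\Psi_0\simeq\id$), which follows from Theorem~\ref{th:3}~(b), produces the announced distinguished triangles. Part (ii) forces $\Psi_{]0,s]}(F)$ and $\Psi_{[0,s[}(F)$ to vanish in $\sht(\cor_M)$, so the two triangles collapse to the natural isomorphism $F\simeq\Psi_{[0,s]}(F)\simeq\Psi_s(F)$ in $\sht(\cor_M)$; the case $s\leq0$ is symmetric. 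The principal obstacle is the strict $\gamma$-cone bound on $\SSi(H)$ used in (ii): this is the precise mechanism by which the geometric compact support assumption on $\Phi$ is converted into the sheaf-theoretic torsion property, and thereby into Hamiltonian invariance in $\sht(\cor_M)$.
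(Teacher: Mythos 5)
Your proof is correct and takes essentially the same route as the paper's: (i) via the microsupport estimate for kernel composition together with $\rho\circ\tw\phi_s=\phi_s\circ\rho$; (ii) by applying Proposition~\ref{prop:proj-torsion} to $\tw K\conv F$ after establishing a strict $\gamma$-cone bound on its microsupport; (iii) from the excision triangles for $\{0\},\{s\}\subset[0,s]$ together with $\Psi_0\simeq\id$. The one place where you diverge from the paper is the derivation of the strict $\gamma$-cone estimate: you obtain $|\sigma|\le M_0\tau$ directly from the homogeneity of degree $1$ and the boundedness (over relatively compact subintervals of $I$) of the Hamiltonian generating $\tw\Phi$, whereas the paper routes this through Lemma~\ref{lem:Acompact} applied to $\opb{\rho}(A)$ combined with the structure of $\tw\Lambda$ — a cosmetic difference that does not change the substance of the argument.
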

\begin{proof}
(i) We set $\tw \Lambda_s = \tw \Lambda \circ T^*_sI$. This is the graph
of $\tw\phi_s$. Hence
$$
\SSi(\Psi_s(F))\cap \{\tau>0\} \subset \tw \Lambda_s \circ \opb{\rho}(A)
= \tw\phi_s(\opb{\rho}(A)) = \opb{\rho} (\phi_s(A)),
$$
which proves the first statement.

\medskip\noindent
(ii)-(iii)
(a) We set $\tw F = \tw K \circ F$ which belongs to $\Derlb(\cort_{M\times I})$
by Proposition~\ref{prop:action-kernel}.
We have $\SSi(\tw F) \cap \{\tau>0\} \subset \tw \Lambda \circ \opb{\rho}(A)$.
As in Lemma~\ref{lem:Acompact}
we define $A' \subset T^*M \times \R$ by
$A' = \{(x;\xi,\tau); \tau>0, \; (x;\xi/\tau) \in A\}$.
Then $\ol{A'}$ is a strict $\gamma$-cone.
It follows that there exists a closed strict  $\gamma$-cone
$B\subset T^*(M\times I) \times \R$  such that
$\tw \Lambda \circ \opb{\rho}(A) \subset \opb{\piw}(B) \cap \{\tau>0\}$.
Then Lemma~\ref{lem:Acompact} gives
$\SSi(\tw F) \subset \opb{\piw}(B)
\cup T^*_{M\times I\times \R}(M\times I\times \R)$.
In particular $\tw F|_{M\times J\times \R}$ satisfies the hypothesis of
Proposition~\ref{prop:proj-torsion} for any relatively compact subinterval
$J\subset I$.

\medskip\noindent
(b) We let $q\cl M\times I \times \R \to M\times \R$ be the projection.
For a relatively compact subinterval $J\subset I$ we have
$\Psi_J(F) \simeq \roim{q}(\tw F\tenso \cor_{M\times J\times \R})$.
Then (ii) follows from Proposition~\ref{prop:proj-torsion}.
The triangles in (iii) are induced by the excision triangles associated with
the inclusions $\{0\} \subset [0,s]$ and $\{s\} \subset [0,s]$.
Then (ii) gives $F\isofrom \Psi_{[0,s]}(F) \isoto \Psi_s(F)$ in $\sht(\cor_M)$.
\end{proof}

\subsubsection*{Application to non displaceability}
Recall that two compact subsets $A$ and $B$ of $T^*M$ are called mutually non
displaceable if, for any Hamiltonian isotopy
$\Phi\cl T^*M\times I\to T^*M$ satisfying~\eqref{eq:hyp-support-isot}
and any $s\in I$,
$A\cap\phi_s(B)\not=\emptyset$. A compact subset $A$ is called non
displaceable if $A$ and $A$ are mutually non displaceable.
Let $A$ and $B$ be two compact subsets of $T^*M$, let 
$F\in\Derb_A(\corgl_M)$ and $G\in\Derb_B(\corgl_M)$.
Let $q_2\cl M\times\R \to \R$ be the projection.  Recall that
$\rhomc(F,G) \in\Derb(\corgr_M)$ by~\eqref{eq:rhomcperpr}. We deduce by
adjunction that $\roim{q_2}\rhomc(F,G) \in\Derb(\corgr)$.
We shall consider the following hypothesis:
\eq\label{hyp:nondisp1}
&& \mbox{$\roim{q_2}\rhomc(F,G)$ is not torsion.}
\eneq

\begin{theorem}\label{th:nondis1}
{\rm (The non displaceability Theorem of~\cite[Th.~3.1]{Ta}.)}
Let $A$ and $B$ be two compact subsets of $T^*M$. Assume that 
there exist $F\in\Derb_A(\corgl_M)$ and $G\in\Derb_B(\corgl_M)$
satisfying the hypothesis~\eqref{hyp:nondisp1}.
Then $A$ and $B$ are mutually non displaceable in $T^*M$.
\end{theorem}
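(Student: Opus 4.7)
The plan is to argue by contradiction. Assume there exists $s_0 \in I$ with $A \cap \phi_{s_0}(B) = \emptyset$; I will derive that $\roim{q_2}\rhomc(F,G)$ is torsion, contradicting hypothesis~\eqref{hyp:nondisp1}. The strategy combines two ingredients: the separation theorem applied to $F$ and $\Psi_{s_0}(G)$, and the fact that $G$ and $\Psi_{s_0}(G)$ become isomorphic in $\sht(\cor_M)$ thanks to Theorem~\ref{thm:invariance}.

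For the separation step, Theorem~\ref{thm:invariance}(i) places $\Psi_{s_0}(G)$ in $\Derb_{\phi_{s_0}(B)}(\cort_M)$. Lemma~\ref{lem:Acompact} then ensures that the canonical representatives of $F$ and $\Psi_{s_0}(G)$ in $\Derb(\corgl_M)$ have microsupports contained in $\opb{\piw_E}$ of disjoint closed strict $\gamma$-cones in $T^*M \times \R^*$. Because both supports project to compact subsets of $M$, the projection $q_2$ is automatically proper on the difference support required by Theorem~\ref{th:separation}. Applying that theorem yields
\[
\roim{q_2}\rhomc(F, \Psi_{s_0}(G)) \simeq 0.
\]

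The second ingredient is a lemma that $\roim{q_2}\rhomc(F, -)$ preserves torsion objects. The key observation is that translation commutes with $\rhomc$: the adjunction $\star \dashv \rhomc$ together with the natural isomorphism $H_1 \star \oim{T_c} H_2 \simeq \oim{T_c}(H_1 \star H_2)$ (since $T_c$ acts in the summation variable of $\star$) produces a natural isomorphism $\rhomc(F, \oim{T_c} H) \simeq \oim{T_c}\rhomc(F, H)$, and unwinding the definition one verifies that the Tamarkin morphism $\tau_{0,c}(H)$ is sent by $\rhomc(F, -)$ to the morphism $\tau_{0,c}(\rhomc(F, H))$. Therefore, if $\tau_{0,c}(H) = 0$ then $\tau_{0,c}(\rhomc(F, H)) = 0$, showing $\rhomc(F, H) \in \Derb(\corgr_M)$ is torsion. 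Since $\roim{q_2}$ commutes with $\oim{T_c}$ as well, the image $\roim{q_2}\rhomc(F, H)$ is torsion in $\Derb(\corgr)$.

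To conclude, apply $\roim{q_2}\rhomc(F, -)$ to the two distinguished triangles of Theorem~\ref{thm:invariance}(iii) for $s = s_0$ (and symmetrically when $s_0 \leq 0$):
\[
\Psi_{]0,s_0]}(G) \to \Psi_{[0,s_0]}(G) \to G \to[+1], \qquad \Psi_{[0,s_0[}(G) \to \Psi_{[0,s_0]}(G) \to \Psi_{s_0}(G) \to[+1].
\]
By Theorem~\ref{thm:invariance}(ii) the first terms are torsion, hence by the key lemma so are their images under $\roim{q_2}\rhomc(F, -)$. In the localized category $\sht(\cor)$ this yields
\[
\roim{q_2}\rhomc(F, G) \simeq \roim{q_2}\rhomc(F, \Psi_{[0,s_0]}(G)) \simeq \roim{q_2}\rhomc(F, \Psi_{s_0}(G)) \simeq 0.
\]
Thus $\roim{q_2}\rhomc(F, G)$ is torsion, contradicting~\eqref{hyp:nondisp1}. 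The main obstacle is the naturality check in the key lemma: verifying that the commutation $\rhomc(F, \oim{T_c} H) \simeq \oim{T_c}\rhomc(F, H)$ actually intertwines the Tamarkin transition morphisms $\tau_{0,c}$, which is routine but requires careful bookkeeping with the adjunction and the sheaf-level definition of $\tau_{0,c}$.
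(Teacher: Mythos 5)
Your proof is correct, but it reaches the conclusion by a genuinely different route from the paper's in the second half. The first half coincides: both arguments apply the separation theorem (Theorem~\ref{th:separation}, via Lemma~\ref{lem:Acompact}, exactly as in Theorem~\ref{th:Tamvan}) to get $\roim{q_2}\rhomc(F,\Psi_{s_0}(G))\simeq 0$; note only that your phrase ``disjoint strict $\gamma$-cones'' is slightly imprecise, since the closures necessarily meet along $T^*_MM\times\{0\}$ --- but that is precisely what Theorem~\ref{th:separation} tolerates. To transport the vanishing from $s_0$ back to $s=0$, the paper runs a family argument: it forms $G'=\tw K\conv G$ over $M\times\R\times I$, identifies the slices of $\roim{q_{23}}\rhomc(F',G')$ with $\roim{q_2}\rhomc(F,G_s)$ using Corollary~\ref{cor:restr-rhomc} and properness, checks that this object has strict $\gamma$-cone microsupport, and then invokes the propagation-of-torsion Theorem~\ref{th:propag-of-torsion} on the parameter interval. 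You instead use the distinguished triangles of Theorem~\ref{thm:invariance}~(ii)--(iii) together with your key lemma that $\roim{q_2}\rhomc(F,\scbul)$ commutes with $\oim{T_c}$ and the transition morphisms $\tau_{0,c}$, hence preserves torsion; this lemma is sound (it is in fact stated and used verbatim in the paper's proof of Corollary~\ref{cor:sht-nondisp1}), and the passage from ``zero in $\sht(\cor)$'' back to ``torsion'' is legitimate because $\shnt$ is closed under direct summands. Your route is more modular --- it packages the analytic input entirely into Theorem~\ref{thm:invariance} (itself resting on Proposition~\ref{prop:proj-torsion}, the same engine behind Theorem~\ref{th:propag-of-torsion}) and avoids the family-version bookkeeping with $\rhomc(F',G')$ --- at the price of the naturality verification you correctly flag as the one point requiring care.
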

\begin{proof}
Assume $\Phi$ is a Hamiltonian isotopy such that
$\phi_{s_0}(B)\cap A=\emptyset$. 
We consider $\tw \Phi\cl \dT^*(M\times \R)\times I\to \dT^*(M\times\R)$ 
and $\tw K \in \Derlb(\cor_{M\times \R \times M\times \R \times I})$ as in the
introduction of this section.

We define $F',G'\in \Derb(\corgl_{M\times I})$ by
$F' = F\etens\cor_I$ and $G' = \tw K \circ G$. We let
$q_{23}\cl M\times \R \times I \to \R \times I$ be the projection.
We have $F \simeq F'|_{M \times \R \times \{s\}}$ and we set 
$G_s = G'|_{M \times \R \times \{s\}}$. By Lemma~\ref{lem:Acompact} and
Corollary~\ref{cor:restr-rhomc}, we have
$\rhomc(F',G')|_{M \times \R \times \{s\}} \simeq \rhomc(F,G_s)$.
By Lemma~\ref{lem:Acompact} 
 $q_{23}$ is proper on the support of $\rhomc(F',G')$ and we get
$$
(\roim{q_{23}}\rhomc(F',G'))|_{M \times \R \times \{s\}}
\simeq \roim{q_2}\rhomc(F,G_s).
$$
Since $\SSi(G_s) \subset \opb{\rho}(\phi_s(B))$, 
Theorem~\ref{th:Tamvan} implies $\roim{q_2}\rhomc(F,G_{s_0}) = 0$.

By Proposition~\ref{pro:SSconv} and Lemma~\ref{lem:star-gamma-cones},
the microsupport of $\rhomc(F',G')$ is contained in $\opb{\piw}(C)$
for some strict $\gamma$-cone $C$. Hence a similar inclusion holds for the
microsupport of $\roim{q_{23}}\rhomc(F',G')$.
Then Theorem~\ref{th:propag-of-torsion} implies that $\rhomc(F,G_s)$ is
torsion for all $s\in I$. In particular $\rhomc(F,G)$ is torsion, which
contradicts the hypothesis~\eqref{hyp:nondisp1}.
\end{proof}

\begin{corollary}\label{cor:sht-nondisp1}
Let $A$ and $B$ be two compact subsets of $T^*M$. Assume that 
there exist $F\in\Derb_A(\cort_M)$ and $G\in\Derb_B(\cort_M)$
such that $\Hom[\sht(\cor_M)](F,G) \not= 0$.
Then $A$ and $B$ are mutually non displaceable in $T^*M$.
\end{corollary}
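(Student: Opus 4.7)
The plan is to argue by contradiction: assume there exists a Hamiltonian isotopy $\Phi$ satisfying~\eqref{eq:hyp-support-isot} and some $s_0 \in I$ with $\phi_{s_0}(B) \cap A = \emptyset$, and deduce $\Hom[\sht(\cor_M)](F,G) = 0$ from the separation Theorem~\ref{th:Tamvan}, combined with the Hamiltonian isotopy invariance Theorem~\ref{thm:invariance} and the description of morphisms in $\sht(\cor_M)$ given by Proposition~\ref{pro:locandlim}.

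The first step is to rewrite $\Hom[\sht(\cor_M)](F,G)$ after transporting $G$ by the isotopy. By Theorem~\ref{thm:invariance}~(i), $\Psi_{s_0}(G) \in \Derb_{\phi_{s_0}(B)}(\cort_M)$, and by Theorem~\ref{thm:invariance}~(iii), there is a natural isomorphism $G \simeq \Psi_{s_0}(G)$ in $\sht(\cor_M)$. Thus
\eqn
&& \Hom[\sht(\cor_M)](F,G) \simeq \Hom[\sht(\cor_M)](F,\Psi_{s_0}(G)) .
\eneqn
The second step is to unfold the target using Proposition~\ref{pro:locandlim}, which gives
\eqn
&& \Hom[\sht(\cor_M)](F,\Psi_{s_0}(G)) \simeq
\indlim[c\to +\infty] \Hom[\Derb(\cort_M)](F, \oim{T_c}\Psi_{s_0}(G)).
\eneqn

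The third step is to observe that each term in this colimit vanishes. The map $T_c\cl (x,t)\mapsto (x,t+c)$ only translates in the $t$-direction, so it leaves the $(\xi,\tau)$-part of the microsupport unchanged; hence $\rho(\SSi(\oim{T_c}H)) = \rho(\SSi(H))$ for any $H$, and consequently $\oim{T_c}\Psi_{s_0}(G) \in \Derb_{\phi_{s_0}(B)}(\cort_M)$ for every $c\geq 0$. Since $A$ and $\phi_{s_0}(B)$ are compact and disjoint, Theorem~\ref{th:Tamvan} applies and yields
\eqn
&& \Hom[\Derb(\cort_M)](F, \oim{T_c}\Psi_{s_0}(G)) \simeq 0 \quad\text{for all } c\geq 0.
\eneqn
Passing to the colimit gives $\Hom[\sht(\cor_M)](F,\Psi_{s_0}(G)) \simeq 0$, hence $\Hom[\sht(\cor_M)](F,G) \simeq 0$, contradicting the hypothesis.

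The only delicate point is verifying that $\oim{T_c}$ preserves the subcategory $\Derb_{\phi_{s_0}(B)}(\cort_M)$, which reduces to the stability of $\SSi$ under translation in $t$ together with the definition of $\rho$ in~\eqref{eq:rho2}; all other ingredients (the invariance isomorphism in $\sht(\cor_M)$, the colimit description of Hom, and the separation theorem) are already packaged in the results cited above, so no further computation is required.
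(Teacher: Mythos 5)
Your proof is correct, but it takes a genuinely different route from the paper's. The paper deduces the corollary from Theorem~\ref{th:nondis1}: using Proposition~\ref{pro:locandlim} it extracts from $\Hom[\sht(\cor_M)](F,G)\neq 0$ a $c$ such that $\tau_{c,d}(G)$ induces a nonzero map on $\Hom[{\Derb(\cort_M)}](F,\oim{T_c}G)$ for all $d\geq c$, then identifies this Hom with $H^0_{[0,+\infty[}(\R;\roim{q_2}\rhomc(F,\oim{T_c}G))$ via Lemma~\ref{le:secthomc} and the compatibility $\roim{q_2}\rhomc(F,\oim{T_c}G)\simeq\oim{T_c}\roim{q_2}\rhomc(F,G)$, concluding that $\roim{q_2}\rhomc(F,G)$ is not torsion, which is exactly hypothesis~\eqref{hyp:nondisp1}. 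You instead bypass Theorem~\ref{th:nondis1} (and hence the propagation-of-torsion Theorem~\ref{th:propag-of-torsion} on which its proof rests) and argue directly: replace $G$ by $\Psi_{s_0}(G)$ using the isomorphism in $\sht(\cor_M)$ from Theorem~\ref{thm:invariance}~(iii), unfold $\Hom[\sht(\cor_M)]$ as a colimit via Proposition~\ref{pro:locandlim}, and kill each term with the separation Theorem~\ref{th:Tamvan}, noting that $\oim{T_c}$ preserves $\Derb_{\phi_{s_0}(B)}(\cort_M)$ since $\rho$ does not see the $t$-coordinate (strictly one should write $\rho(\SSi(\oim{T_c}H)\cap\{\tau>0\})=\rho(\SSi(H)\cap\{\tau>0\})$, but this is what you mean). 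Your argument is self-contained and arguably cleaner for this particular corollary; what it does not give is the stronger Theorem~\ref{th:nondis1} itself, whose hypothesis~\eqref{hyp:nondisp1} on $\roim{q_2}\rhomc(F,G)$ is a priori weaker than non-vanishing of $\Hom[\sht(\cor_M)](F,G)$ and is the criterion Tamarkin actually applies to concrete examples.
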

\begin{proof}
By Proposition~\ref{pro:locandlim}, there exists $c\in \R$ such that
the morphism induced by $\tau_{c,d}(G)$,
$\Hom[{\Derb(\cort_M)}](F,\oim{T_c} G)
\to \Hom[{\Derb(\cort_M)}](F,\oim{T_d} G)$
is non zero  for all $d\geq c$.
But Lemma~\ref{le:secthomc} gives
$$
\Hom[{\Derb(\cort_M)}](F,\oim{T_c} G) \simeq
H^0_{[0,+\infty[}(\R;\roim{q_2}\rhomc(F, \oim{T_c} G)).
$$
On the other hand we see that
$\roim{q_2}\rhomc(F, \oim{T_c} G) \simeq \oim{T_c}\roim{q_2}\rhomc(F, G)$
and that $\tau_{c,d}(G)$ induces $\tau_{c,d}(\roim{q_2}\rhomc(F, G))$
through this isomorphism.
Hence $\roim{q_2}\rhomc(F, G)$ is non torsion and we can apply
Theorem~\ref{th:nondis1}.
\end{proof}
Let $A$ be a closed conic subset of $T^*M$. We know by
Corollary~\ref{cor:dbMtodbE} that the functor
\eq\label{eq:fctff}
&&j_M\cl \Derb_A(\cor_M)\to\sht(\cor_M),
\quad F\mapsto F\etens\cor_{[0,+\infty[}
\eneq
is fully faithful.
Applying Corollary~\ref{cor:sht-nondisp1} with
$F=G=j_M(\cor_M)\in\sht(\cor_M)$ and $A=B = T^*_MM$, we get
\begin{corollary}\label{cor:nondis2}
Assume $M$ is compact. Then  $M$ is non displaceable in $T^*M$.
\end{corollary}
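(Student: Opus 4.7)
The plan is to invoke Corollary~\ref{cor:sht-nondisp1} with the choice $A=B=T^*_MM$ (the zero-section, which is compact precisely because $M$ is compact) and $F=G=j_M(\cor_M)=\cor_M\etens\cor_{[0,+\infty[}$. So I need to check two things: that $F$ actually lies in $\Derb_{T^*_MM}(\cort_M)$, and that $\Hom[\sht(\cor_M)](F,F)\neq 0$.

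First I would verify the microsupport condition. Since $\SSi(\cor_M)=T^*_MM$ and $\SSi(\cor_{[0,+\infty[})\subset\{\tau=0\}\cup\{t=0,\tau\geq 0\}$, Theorem~\ref{th:opboim}~(i) gives
\[
\SSi(\cor_M\etens\cor_{[0,+\infty[})\subset T^*_MM\times\SSi(\cor_{[0,+\infty[}).
\]
Intersecting with $\{\tau>0\}$ yields a subset of $T^*_MM\times\{t=0,\tau>0\}$, which is contained in $\opb{\rho}(T^*_MM)$. Hence $F\in\Derb_{T^*_MM}(\cort_M)$.

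Next, the computation of the Hom set is immediate from the full faithfulness statement in Corollary~\ref{cor:dbMtodbE}: the composition $\Derb(\cor_M)\to\Derb(\cort_M)\to\sht(\cor_M)$ is fully faithful, so
\[
\Hom[\sht(\cor_M)](j_M(\cor_M),j_M(\cor_M))\simeq\Hom[\Derb(\cor_M)](\cor_M,\cor_M)\simeq H^0(M;\cor_M),
\]
and this group is non-zero because it contains the identity morphism (as $M$ is non-empty).

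Finally, applying Corollary~\ref{cor:sht-nondisp1} gives that $T^*_MM$ is non displaceable in $T^*M$, which is exactly the statement that $M$ is non displaceable. There is essentially no obstacle: the content has been packaged into the fully faithful embedding $j_M$ and the non displaceability criterion, which in turn rest on the separation theorem and the Hamiltonian invariance modulo torsion established earlier. The only minor verification is the microsupport bound above, which is a direct application of the external product estimate.
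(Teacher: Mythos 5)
Your proposal is correct and follows exactly the route the paper intends: apply Corollary~\ref{cor:sht-nondisp1} with $A=B=T^*_MM$ and $F=G=j_M(\cor_M)$, using the full faithfulness from Corollary~\ref{cor:dbMtodbE} to see that the relevant $\Hom$ in $\sht(\cor_M)$ is non-zero. The paper states this in one line; you simply unpack the microsupport check and the Hom computation, both of which are straightforward and correct.
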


In~\cite{Ta}, Tamarkin applies the non displaceability Theorem~\ref{th:nondis1} to
prove that the following sets are non displaceable.

Set $X=\BBP(\C)^n$ endowed with his standard real symplectic
structure. Consider the sets $A\eqdot \BBP(\R)^n$ and 
$B\eqdot \BBT=\{z=(z_0,\dots,z_n);\vert{z_0}\vert=\dots\vert{z_n\vert}\}$.
Then $A$ and $B$ are non displaceable and $A$ and $B$ are mutually non
displaceable.

\providecommand{\bysame}{\leavevmode\hbox to3em{\hrulefill}\thinspace}

\vspace*{1cm}
\noindent
\parbox[t]{16em}
{\scriptsize{
\noindent
St{\'e}phane Guillermou\\
Institut Fourier\\
Universit{\'e} de Grenoble I\\
BP 74, 38402
Saint-Martin d'H{\`e}res, France\\
email: Stephane.Guillermou@ujf-grenoble.fr\\
http://www-fourier.ujf-grenoble.fr/\textasciitilde guillerm/}
}
\hspace{0.1cm}
\parbox[t]{16em}
{\scriptsize{
Pierre Schapira\\
Institut de Math{\'e}matiques\\
Universit{\'e} Pierre et Marie Curie\\
4, place Jussieu, case 247, 75252 Paris cedex 5, France \\
e-mail: schapira@math.jussieu.fr\\
http://www.math.jussieu.fr/\textasciitilde schapira/}}

\end{document}